\def\vint_#1{\mathchoice%
          {\mathop{\kern 0.2em\vrule width 0.6em height 0.69678ex depth -0.58065ex
                  \kern -0.8em \intop}\nolimits_{\kern -0.4em#1}}%
          {\mathop{\kern 0.1em\vrule width 0.5em height 0.69678ex depth -0.60387ex
                  \kern -0.6em \intop}\nolimits_{#1}}%
          {\mathop{\kern 0.1em\vrule width 0.5em height 0.69678ex
              depth -0.60387ex
                  \kern -0.6em \intop}\nolimits_{#1}}%
          {\mathop{\kern 0.1em\vrule width 0.5em height 0.69678ex depth -0.60387ex
                  \kern -0.6em \intop}\nolimits_{#1}}}
\def\vintslides_#1{\mathchoice%
          {\mathop{\kern 0.1em\vrule width 0.5em height 0.697ex depth -0.581ex
                  \kern -0.6em \intop}\nolimits_{\kern -0.4em#1}}%
          {\mathop{\kern 0.1em\vrule width 0.3em height 0.697ex depth -0.604ex
                  \kern -0.4em \intop}\nolimits_{#1}}%
          {\mathop{\kern 0.1em\vrule width 0.3em height 0.697ex de pth -0.604ex
                  \kern -0.4em \intop}\nolimits_{#1}}%
          {\mathop{\kern 0.1em\vrule width 0.3em height 0.697ex depth -0.604ex
                  \kern -0.4em \intop}\nolimits_{#1}}}
\def\csname ver@etex.sty\endcsname{3000/12/31}
\numberwithin{equation}{section}
\newtheorem{theorem}{Theorem}[section]
\newtheorem{lemma}[theorem]{Lemma}
\newtheorem{corollary}[theorem]{Corollary}
\theoremstyle{definition}
\newtheorem{definition}[theorem]{Definition}
\theoremstyle{plain}
\newtheorem{proposition}[theorem]{Proposition}
\theoremstyle{remark}
\newtheorem{rem}[theorem]{Remark}
\newtheorem{example}[theorem]{Example}
\renewcommand{\d}{{\mathrm{\,d}}}
\newcommand{\R}{\mathbb{R}}
\newcommand{\N}{\mathbb{N}}
\newcommand{\Z}{\mathbb{Z}}
\newcommand{\B}{\mathcal{B}}
\newcommand{\opt}{\mathrm{Opt}}
\newcommand{\id}{\mathrm{id}}
\newcommand{\M}{\mathcal{M}}
\renewcommand{\L}{\mathcal{L}}
\newcommand{\Lip}{\mathrm{Lip}}
\newcommand{\var}{\mathrm{Var}}
\newcommand{\essvar}{\mathrm{ess\, Var}}
\def\namedlabel#1#2{\begingroup
    #2%
    \def\@currentlabel{#2}%
    \phantomsection\label{#1}\endgroup
}
\begin{document}

\title[Absolutely continuous and BV-curves in 1-Wasserstein spaces]
{Absolutely continuous and BV-curves in \\1-Wasserstein spaces}

\author{Ehsan Abedi$^\dagger$}
\author{Zhenhao Li$^\dagger$}
\author{Timo Schultz$^\dagger$$^\ddag$}

\address{$^\dagger$Faculty of Mathematics \\
         Bielefeld University \\
         Postfach 10 01 31 \\
         33501 Bielefeld \\
         Germany.}
\address{$^\ddag$Institute for Applied Mathematics \\
        University of Bonn \\
        Endenicher Allee 60 \\
        53115 Bonn \\
        Germany.}
        
\email{ehsan.abedi@math.uni-bielefeld.de}
\email{zhenhao.li@math.uni-bielefeld.de}
\email{timo.m.schultz@jyu.fi}


\keywords{Spaces of probability measures, optimal transport, Wasserstein distance, curves of bounded variation, absolutely continuous curves, superposition principle, continuity equation}

\subjclass[2020]{49Q22, 49J27, 26A45}

\thanks{}


\begin{abstract}
    We extend the result of Lisini (Calc Var Partial Differ Equ 28:85–120, 2007) on the superposition principle for absolutely continuous curves in $p$-Wasserstein spaces to the special case of $p=1$.
    In contrast to the case of $p>1$, it is not always possible to have lifts on absolutely continuous curves. Therefore, one needs to relax the notion of a lift by considering curves of bounded variation, or shortly BV-curves, and replace the metric speed by the total variation measure.
    We prove that any BV-curve in a 1-Wasserstein space can be represented by a probability measure on the space of BV-curves which encodes the total variation measure of the Wasserstein curve.
    In particular, when the curve is absolutely continuous, the result gives a lift concentrated on BV-curves which also characterizes the metric speed.
    The main theorem is then applied for the characterization of geodesics and the study of the continuity equation in a discrete setting. 
\end{abstract}

\maketitle

\section{Introduction}\label{sec:introduction}
Let $(X,d)$ be a complete and separable metric space and $P_p(X)$ be the associated Wasserstein space of order $p\geq 1$, i.e., the space of Borel probability measures on $X$ with finite moment of order $p$, endowed with the (Kantorovitch--Rubinstein--)Wasserstein distance $W_p$.
In \cite{Lisini}, Lisini proved that, for $p>1$, any absolutely continuous curve $(\mu_t) \in \mathcal{AC}^p(I:P_p(X))$ over a compact time interval $I \subset \R$ with finite $p$-energy can be represented by a Borel probability measure $\pi$ on continuous curves $(\gamma_t)$ in $X$, that is, $\pi \in P(C(I:X))$, which satisfies the following properties:
\begin{enumerate}[label=(\roman*), font=\normalfont]
    \item \label{itm:Lisini_thm5_i}
    $\pi$ is concentrated on $\mathcal{AC}^p(I:X) \subset C(I:X)$; 
    \item \label{itm:Lisini_thm5_ii}
    $(e_t)_\#\pi=\mu_t$ for all $t\in I$ (where $e_t$ is the evaluation map, defined by $e_t(\gamma) \coloneqq \gamma_t$); 
    \item \label{itm:Lisini_thm5_iii} 
    the metric derivative $|\dot\mu_t|$ satisfies the following for $\mathcal{L}^1$-a.e. $t\in I$:
    \begin{equation}\label{eq:dotmu_p}
    |\dot\mu_t|^p =  \int |\dot\gamma_t|^p \d\pi(\gamma). 
    \end{equation}
\end{enumerate}
Measures on a path space, like $\pi$ above, are sometimes called \emph{path measures}. \cref{itm:Lisini_thm5_i} tells us that to characterize $(\mu_t)$, we can restrict our attention to a specific set of continuous curves, namely, absolutely continuous curves, or shortly \emph{AC-curves}. 
\cref{itm:Lisini_thm5_ii} ensures that $\pi$ has the desired time-marginals, or in other words, is a \emph{lift} of $(\mu_t)$ to the path space. 
This is also known as a \emph{superposition principle} since the curve of measures $(\mu_t)$ is obtained by superposing individual curves $(\gamma_t)$ in the underlying space.
Finally, \cref{itm:Lisini_thm5_iii} states that the metric speed $|\dot\mu_t|$ in the $p$-Wasserstein space can be obtained by taking the average over the metric speed of the characterizing curves in the base space according to the measure $\pi$. 
Equation \eqref{eq:dotmu_p} can be regarded as a minimality property for $\pi$.
Indeed, for general lifts satisfying \ref{itm:Lisini_thm5_i}-\ref{itm:Lisini_thm5_ii}, one can expect only an inequality $(\leq)$ in \eqref{eq:dotmu_p} (see \cite[Theorem 4]{Lisini}). 
The minimal choice, which achieves equality, is in fact constructed using techniques of optimal transport.
For Wasserstein geodesics, such a lift, often called \emph{optimal dynamical plans}, is constructed in an earlier work by Lott and Villani \cite[Proposition 2.10 and E.6]{LottVillani2009} for the case of $p=2$ and in complete locally compact length spaces.
In Lisini's work \cite[Theorem 5]{Lisini}, which local compactness is no longer required, the lift is constructed for general absolutely continuous curves in $p$-Wasserstein spaces, $p>1$, and in particular is used for the characterization of the geodesics.
Later in \cite{Lisini2016}, Lisini also extends the results above to the so-called Wasserstein--Orlicz distance, where the usual cost function $d^p$ is replaced by a more general function $\psi$ with suitable properties.
This extension, however, does not cover the case $d^1$.

In this paper, we study the peculiar case of $p=1$, where the cost function $d^p$ in the definition of the Wasserstein distance loses its strict convexity. 
We first provide a simple example (see \cref{ex:theBeginning} below) in which an absolutely continuous curve in an $1$-Wasserstein space cannot be lifted to a measure $\pi$ on continuous curves.
Nonetheless, we show that a similar superposition result still holds if we relax the notion of lifts.
More precisely, we need to consider a larger class of curves, namely, curves of bounded variation, or shortly \emph{BV-curves} (see also \cref{exp:AC_not_enough}).

When considering the case $p>1$, it is well known that the space of absolutely continuous curves with finite $p$-energy is closely connected to Sobolev space of order 1 with finite $p$-norm via the following ``identification-inclusion'' relationship 
\begin{align}\label{eq:inclusion_AC}
    W^{1,p}(I:X) \simeq \mathcal{AC}^p(I:X) \subset C(I:X),
\end{align}
which succinctly indicates that every Sobolev curve can be identified with an absolutely continuous representative.
Additionally, we have the Borel inclusion
of absolutely continuous curves into the space of continuous curves equipped with the topology of
uniform convergence, which turns it into a Polish space. In the present paper, where we study the case $p=1$, these are replaced by the following
\begin{align}\label{eq:inclusion_BV}
    BV(I:X) \simeq \mathcal{BV}(I:X) \subset D (I:X).
\end{align}
Here $BV(I:X) $ denotes the space of all BV-curves. As an analogue to \eqref{eq:inclusion_AC}, every BV-curve can be identified through a Borel selection map with a Càdlàg (right-continuous and left-limited) curve of bounded variation.
The space of such curves is denoted by $\mathcal{BV}(I:X)$, which is a Borel subset of the larger space of all possible Càdlàg curves denoted by $D (I:X)$. 
The space $D (I:X)$ can be equipped with a specific metric, which turns it into a Polish space, known as \emph{Skorokhod space}.
It is worth mentioning that in restriction to $C (I:X)$, the Skorokhod topology is exactly the topology of
uniform convergence. 
In short, we view BV-curves as a Borel subset, up to choosing a representative, of Skorokhod space.

Even though the metric derivative of BV-curves $u\in BV(I:X)$ exists almost everywhere, as does so for AC-curves, it does not completely capture their ``speed.'' The natural replacement for metric derivative in this situation is the so-called \emph{total variation measure} $|Du| \in \mathcal{M}(I)$, which takes also the singular part of the speed, in particular jumps of the curves, into account.
Here $\mathcal{M}(I)$ is the set of all positive measures over $I$ and we will use $\L^n$ to denote $n$-dimensional Lebesgue measure. 

\vspace{5pt}

\noindent \textbf{Main result.} 
In \cref{thm:lift_BV}, we prove that any $(\mu_t)\in \mathcal{BV}(I:P_1(X))$ can be represented by a Borel probability measure $\tilde{\pi}$ on Càdlàg curves in $X$, that is,  $\tilde{\pi}\in P(D(I:X))$, which satisfies the following properties:
\begin{enumerate}[label=(\roman*), font=\normalfont]
    \item $\tilde{\pi}$ is concentrated on $\mathcal{BV}(I:X)\subset D(I:X)$;
    \item $(e_t)_\#\tilde{\pi}=\mu_t$ for all $t\in I$;
    \item the total variation measure $|D\mu|\in \M(I)$ of $(\mu_t)$ satisfies
    \begin{equation}\label{eq:Dmu_intro}
         |D\mu|=\int|D\gamma|\d\tilde{\pi}(\gamma).
    \end{equation}
\end{enumerate}
Moreover, the absolutely continuous part $|\dot\mu|\mathcal{L}^1$ in the Lebesgue(--Radon--Nikodym) decomposition of the measure $|D\mu|$, given by the metric derivative (see discussion in \cref{subsec:BV-curves}), satisfies
\begin{align}\label{eq:mudot_intro}
     |\dot\mu_t|   = \lim_{h \rightarrow 0 } \int \frac{  d (\gamma_{t}, \gamma_{t+h})}{|h|} \d\tilde{\pi}(\gamma)
\end{align}
for $\mathcal{L}^1$-a.e. $t\in I$. 
In particular, if $(\mu_t) \in \mathcal{AC}^1(I:P_1(X))$, then $|D\mu|=|\dot\mu|\mathcal{L}^1$ and \eqref{eq:mudot_intro} characterizes the metric speed $|\dot\mu_t|$. 
\\
Equation \eqref{eq:Dmu_intro} is interpreted as equality of measures, i.e., for any (non-negative) Borel function $f\colon I \to \mathbb{R}$, we have $\int_I f(t)\d|D\mu|(t)=\int\int_I f(t)\d|D\gamma|(t)\d\tilde{\pi}(\gamma)$.
\cref{thm:BVSk}, which we prove first, indicates that \eqref{eq:Dmu_intro} can be viewed as an optimality condition among all lifts of $(\mu_t)$, as in the case of $p>1$.
To construct $\tilde{\pi}$, we use optimal mass transport as in \cite{Lisini} with modifications for BV-curves.

\vspace{5pt}

\noindent \textbf{A motivating example.} Here we present an elementary example of an absolutely continuous curve in the $1$-Wasserstein space over $\R$, for which it is impossible to have lifts on continuous curves (also discussed briefly in \cite[Remark 3.2]{Lisini2016}). Still, we construct a lift on discontinuous BV-curves. This provides a first insight into our results.
\begin{example}\label{ex:theBeginning}
Consider a curve of probability measures on $\R$ defined as 
\begin{align}\label{eq:mu_t_motivate_example}
    \mu_t \coloneqq (1-t)\delta_0+t\delta_1, \quad t \in I = [0,1]. 
\end{align}
This is a basic situation where the mass is ``teleported'' from 0 to 1, but not continuously ``transported,'' as shown in \cref{fig:motivate_example} (left). 
First of all, observe that for any $t,s\in I$,
\begin{equation}
    W_p^p (\mu_t, \mu_s)  = |t-s|  W_p^p (\mu_1, \mu_0) = |t-s|, 
\end{equation}
and thus, the metric derivative in $p$-Wasserstein space,
$$ |\dot\mu_t|= \lim_{h \to 0 } \frac{W_p(\mu_{t+h}, \mu_t)}{|h|} = \lim_{h \to 0 } \frac{|h|^{1/p}}{|h|},$$
only exits for $p=1$.
Therefore, $(\mu_t) \notin \mathcal{AC}^p(I:P_p(\R)) $ for all $p\in (1, \infty )$. Nevertheless $(\mu_t) \in \mathcal{AC}^1(I:P_1(\R)) $ and it is even a constant-speed geodesic in 1-Wasserstein space between $\delta_0$ and $\delta_1$.
\\
It is clear that there is no measure $\pi$ on the set of continuous curves, i.e.,  a measure in $P(C(I:\R))$, such that $\mu_t = (e_t)_\# \pi $ for all $t \in I$.
However, we do claim that there exists a measure $\pi \in P(D(I:\R))$ concentrated on the set of $\mathcal{BV}$-curves  such that $\mu_t = (e_t)_\# \pi $ for all $t \in I$ and moreover it enjoys the optimally property
\begin{equation}\label{eq:dotmu_example}
    \int_{a}^{b} |\dot\mu_t| \d t = \int
    |D\gamma |([a,b]) \d \pi (\gamma )
\end{equation}
for any $a,b \in I$ with $a<b$.
Comparing with \eqref{eq:Dmu_intro}, we point out that the left-hand side of the equation above is nothing but $|D\mu|([a,b])$ since $(\mu_t)$ in this simple example is absolutely continuous.
\\
To construct $\pi$, let us label particles standing at position $x=0$ at time 0 with a real-valued parameter denoted by $\alpha\in [0,1]$. 
These particles gradually jump to position $x=1$ and since the rate of mass discharge is constant, we would expect that jumps happen uniformly in time.
Let the particle with label $\alpha$ jump from 0 to 1 at time $\alpha$.
Then its path is simply expressed using the indicator function as follows 
\begin{equation}\label{eq:gamma_example1}
    t \mapsto \gamma_t^{(\alpha)} \coloneqq \mathds{1}_{[\alpha, 1]}(t).
\end{equation}
Some sample paths are plotted in \cref{fig:motivate_example} (right).
Now, we consider a uniform measure over $\alpha$ (as jumps happen uniformly) and consequently construct a path measure $\pi$ by 
\begin{equation}\label{eq:pi_example1}
    \pi \coloneqq (\gamma^{(\cdot)})_\# \mathcal{L}^1 |_{[0,1]}.
\end{equation}
We show now that $\pi$ has the desired time-marginals and satisfies \eqref{eq:dotmu_example} as well.
As for the first claim, notice that for any Borel subset $B \in \mathcal{B}(\R)$, we can write
\begin{align}
    (e_t)_\# \pi (B)
    & = \int \mathds{1}_{B} (e_t(\gamma)) \d\pi(\gamma) 
      = \int_{0}^1 \mathds{1}_{B} (\gamma_t^{(\alpha)}) \d \alpha \\
    & = \int_{0}^1 \mathds{1}_{B} (\mathds{1}_{[\alpha, 1]}(t)) \d \alpha
      = \int_{0}^1 \mathds{1}_{B} (\mathds{1}_{[0, t]}(\alpha)) \d \alpha \\
    & = (1-t) \delta_0 (B) + t \delta_1(B) = \mu_t(B)
\end{align}
where we first used the definition of push-forward and then substituted \eqref{eq:pi_example1} and \eqref{eq:gamma_example1}.
As for the second claim \eqref{eq:dotmu_example}, we start from the right-hand side and write
\begin{align}
    \int |D\gamma |([a,b]) \d \pi (\gamma )
    & = \int_{0}^{1} |D\gamma^{(\alpha)} |([a,b]) \d \alpha \\
    & = \int_{0}^{1} \delta_\alpha ([a,b]) \d \alpha = |b-a| = \int_{a}^{b} |\dot\mu_t| \d t, 
\end{align}
which proves the claim.
\\
As already mentioned, $(\mu_t)$ here is a constant-speed geodesic connecting $\delta_0$ to $\delta_1$.
In fact, there are infinitely many constant-speed $W_1$-geodesics between $\delta_0$ to $\delta_1$. In \cref{exp:periodic_extension_2}, we present a relatively general way of how one can construct different geodesics.  

\begin{figure}[h]
\begin{tikzpicture}
    \draw [lightgray] [line width=4] (0,0) -- (0,2*4/5);
    \draw [lightgray] [line width=4] (3,0) -- (3,2*1/5);
    \draw [black] [line width=0.9] (0,0) -- (0,2*3/5);
    \draw [black] [line width=0.9] (3,0) -- (3,2*2/5);
    \draw (0,2*4/5) node[anchor=east] {\scriptsize \color{gray} $\mu_s$};
    \draw (0,2*3/5) node[anchor=east] {\scriptsize \color{black} $\mu_t$};
    \draw [very thin] [-] (3.3,2*1/5) -- (3.3,2*2/5);
    \draw [very thin] (3.3-0.08,2*2/5) -- (3.3+0.08,2*2/5);
    \draw [very thin] (3.3-0.08,2*1/5) -- (3.3+0.08,2*1/5);
    \draw (3.3,2*1.5/5) node[anchor=west] {\scriptsize $t-s$};
    \draw (-0.5,0) -- (4.1,0);
    \filldraw [black] (0,0) circle (1.4pt);
    \draw (0,-0.1) node[anchor=north] {\scriptsize $x=0$};
    \filldraw [black] (3,0) circle (1.4pt);
    \draw (3,-0.1) node[anchor=north] {\scriptsize $x=1$};
    \draw [gray][dashed][->] (0.1,2) .. controls (1,2.7) and (2,2.2) .. (2.9,1.1);
    \draw [<->] (6,2.5) node [left] {\scriptsize $x$} -- (6,0) -- (9.7,0) node [below] {\scriptsize time};
    \draw (6,-0.1) node[anchor=north] {\scriptsize $0$};
    \draw (6,-0.08) rectangle (6,0);
    \draw (6.6,-0.14) node[anchor=north] {\scriptsize $s$};
    \draw (6.6,-0.08) rectangle (6.6,0.08);
    \draw (7.2,-0.1) node[anchor=north] {\scriptsize $t$};
    \draw (7.2,-0.08) rectangle (7.2,0.08);
    \draw (9,-0.1) node[anchor=north] {\scriptsize $1$};
    \draw (9,-0.08) rectangle (9,0.08);
    \draw (6,0.1) node[anchor=east] {\scriptsize $0$};
    \draw (6-0.08,0) rectangle (6,0);
    \draw (6,2) node[anchor=east] {\scriptsize $1$};
    \draw (6-0.08,2) rectangle (6+0.08,2);
    \draw [dashed][lightgray] (6.6,0) -- (6.6,2);
    \draw [dashed][lightgray] (6.7,0) -- (6.7,2);
    \draw [dashed][lightgray] (6.8,0) -- (6.8,2);
    \draw [dashed][lightgray] (7.0,0) -- (7.0,2);
    \draw [dashed][lightgray] (7.1,0) -- (7.1,2);
    \draw [dashed][lightgray] (7.2,0) -- (7.2,2);
    \draw [dashed][black] (6.9,0) -- (6.9,2);
    \draw [lightgray] (6.6,0) circle [radius=0.04];
    \draw [lightgray] (6.7,0) circle [radius=0.04];
    \draw [lightgray] (6.8,0) circle [radius=0.04];
    \draw [lightgray] (7.0,0) circle [radius=0.04];
    \draw [lightgray] (7.1,0) circle [radius=0.04];
    \draw [lightgray] (7.2,0) circle [radius=0.04];
    \draw [fill=lightgray,lightgray] (6.6,2) circle [radius=0.04];
    \draw [fill=lightgray,lightgray] (6.7,2) circle [radius=0.04];
    \draw [fill=lightgray,lightgray] (6.8,2) circle [radius=0.04];
    \draw [fill=lightgray,lightgray] (7.0,2) circle [radius=0.04];
    \draw [fill=lightgray,lightgray] (7.1,2) circle [radius=0.04];
    \draw [fill=lightgray,lightgray] (7.2,2) circle [radius=0.04];
    \draw [lightgray][thick] (6,0) -- (6.6,0);
    \draw [lightgray][thick] (6,0) -- (6.7,0);
    \draw [lightgray][thick] (6,0) -- (6.8,0);
    \draw [lightgray][thick] (6,0) -- (7.0,0);
    \draw [lightgray][thick] (6,0) -- (7.1,0);
    \draw [lightgray][thick] (6,0) -- (7.2,0);
    \draw [lightgray][thick] (6.6,2) -- (9,2);
    \draw [lightgray][thick] (6.7,2) -- (9,2);
    \draw [lightgray][thick] (6.8,2) -- (9,2);
    \draw [lightgray][thick] (7.0,2) -- (9,2);
    \draw [lightgray][thick] (7.1,2) -- (9,2);
    \draw [lightgray][thick] (7.2,2) -- (9,2);
    \draw [fill=black] (6.9,2) circle [radius=0.04];
    \draw [black][thick] (6,0) -- (6.9,0);
    \draw [black][thick] (6.9,2) -- (9,2);
    \draw [fill=white] (6.9,0) circle [radius=0.04];
\end{tikzpicture}
\captionsetup{font=scriptsize}
\caption{(\cref{ex:theBeginning}) \textbf{Left:} measure \eqref{eq:mu_t_motivate_example} at times $s=1/5, \, t=2/5$. \textbf{Right:} sample curves $\gamma^{(\alpha)}$ in the lift $\pi$ whose jumps occur at time $\alpha \in [s,t]$. The curve with $\alpha = (s+t)/2$ is highlighted.}
\label{fig:motivate_example}
\end{figure}
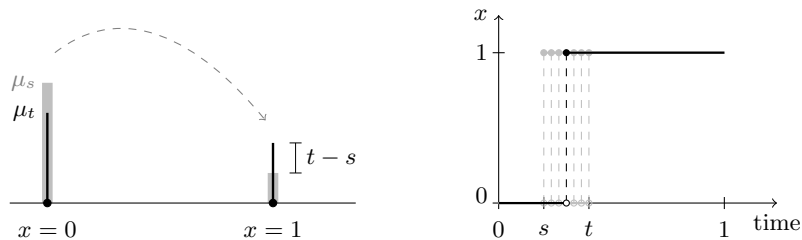
\end{example}

\noindent
\textbf{Applications.}  As a direct application of Theorems \ref{thm:BVSk} and \ref{thm:lift_BV}, we characterize BV-curves in 1-Wasserstein spaces. 
Furthermore, we characterize what we call BV-geodesics, i.e., variation minimizing curves, in 1-Wasserstein spaces. 
With the understanding (thanks to \cref{thm:lift_BV}) of continuity and metric derivatives of Wasserstein curves of bounded variation, we then distinguish continuous length minimizing and constant speed geodesics among all BV-geodesics. 
We also discuss why the characterizing absolutely continuous curves in 1-Wasserstein spaces using their lifts still remains challenging.
    
As seen in \cref{ex:theBeginning}, superposing discontinuous curves might result in continuity in the 1-Wasserstein space. 
On the other hand, continuous curves will always lead to a continuous Wasserstein curve. 
We investigate the relation between the regularity of the curves at the level of the base space and at the level of the Wasserstein space. 
The observations can be summarized as follows: superposing curves can only increase the regularity or, to put it differently, irregularities may average out when superposing, see \cref{table:gamma_mu}.
    
Finally, we study the continuity equation in a discrete setting.
More precisely, using the lift coming from \cref{thm:lift_BV}, we show that for any absolutely continuous curve $(\mu_t)$ living in a bounded subset of a (topologically) discrete metric space, there exists $(v_t)$, a suitable discrete analogue of a time-dependent vector field, such that $(\mu_t,v_t)$ satisfy the discrete continuity (or current) equation.
We conclude with a discussion on a discrete Benamou--Brenier formula and on challenges arising if one is interested not only in the metric structure of the discrete space but also in an additional graph structure.

\vspace{5pt}

\noindent \textbf{Organisation of the paper.} The remainder of the paper is structured as follows:
\begin{itemize}
    \item In \cref{sec:preliminaries}, we provide preliminary concepts concerning BV-curves in metric spaces and Skhorokhod space.
    Additionally, we prove equivalent definitions of BV-curves in \cref{thm:DefBVcurves}.
    Such a result (which we did not easily find it in the literature) makes it more convenient to work with BV-curves in different situations. 
    \item In \cref{sec:results}, we present and prove the main results, \cref{thm:BVSk} and \ref{thm:lift_BV}. We then provide some examples to shed light on the main results.
    \item In \cref{sec:applications}, we apply the main theorems to characterize BV-curves and geodesics, understand better the regularity of curves in superposition, and finally study the continuity equation in a discrete setting. 
\end{itemize}

\vspace{5pt}

\noindent \textbf{Acknowledgement.}
We thank Matthias Erbar for helpful suggestions and stimulating discussions on related topics. We also thank Tapio Rajala for valuable comments on the paper.
The third named author acknowledges the support provided by the Deutsche Forschungsgemeinschaft (DFG, German Research Foundation) - through SPP 2026 Geometry at Infinity.
The authors also thank the anonymous referee for providing detailed feedback on the manuscript.

\vspace{5pt}

\noindent \textbf{Data availability.} Data sharing is not applicable to this article as no datasets were generated or analyzed during the current study.

\section{Preliminaries}\label{sec:preliminaries}
\subsection{Summary of main notation}\label{subsec:notation}

 Throughout the paper, we consider a complete and separable metric space $(X,d)$ and a compact time interval $I \subset \mathbb{R}$. Without loss of generality, we fix $I=[0,1]$. 
Two main path spaces are 
the space of continuous paths $C(I:X)$ equipped with the topology of uniform convergence, and the space of Càdlàg paths $D(I:X)$ equipped with the Skorokhod topology. For $p\geq 1$, $W^{1,p}(I:X)$ denotes the Sobolev space and $\mathcal{AC}^p(I:X)$ denotes the set absolutely continuous curves with finite $p$-energy (recall the relationship \eqref{eq:inclusion_AC} for $p>1$). $BV(I:X)$ is the set of curves of bounded variation, and $\mathcal{BV}(I:X)$ is the set of Càdlàg curves of bounded variation (recall the relationship \eqref{eq:inclusion_BV} and see \cref{def:BV,def:cadlag}).
$\mathcal{M}(I)$ is the set of all positive measures over $I$ and $\mathcal{L}^n$ is the $n$-dimensional Lebesgue measure.
        
For $t\in I$, $u\mapsto e_t (u) \coloneqq u_t$ is the evaluation map, where $u\colon I\to X$ is a path. 
The variation measure of a BV-curve $u$ is denoted by $|Du|$ and the density of the absolutely continuous part at point $t$ with respect to the Lebesgue measure is denoted  by $|\dot{u}| (t)$. If it exists, the metric derivative of  a curve $u$ at time $t$ is denoted by $|\dot{u}_t|$ (see \cref{def:varmeasure,eq:metric_derivative}). In fact, $|\dot{u}| (t) = |\dot{u}_t|$, as stated in \cref{lemma:metric_derivative}.
        
The $\sigma$-algebra of Borel sets of $X$ is denoted by $\mathcal{B}(X)$. We denote by $P(X)$ the space of Borel probability measures on $X$, and by $P_p(X) \subset P(X)$, $p\geq 1$,  its subset of measures with finite $p$-th moment. The space $P_p(X)$ is endowed with (Kantorovitch--Rubinstein--)Wasserstein metric $W_p$. Given a map $T: X \to Y$ between two measurable spaces and a probability measure $\mu \in P(X)$, the push-forward measure (or the image measure) is denoted by $T_{\#} \mu \in P(Y)$.

\subsection{BV-curves in metric spaces}\label{subsec:BV-curves}
In this subsection, we recall basic definitions and notions related to curves of bounded variation.  
$L^1(I:X)$ denotes the space of all maps $u \colon I \to X$ such that $\int_I d(u (t), \bar{x}) \d t < \infty$ for some (and thus every) $\bar{x} \in X$.

\begin{definition}[Variation]
  The \emph{pointwise variation} of a function $u : I\rightarrow X$ on any subset $J \subset I$ is defined as
  $$ \var^{} (u ; J)\coloneqq\sup\left\{\sum_{i=0}^k d(u(t_i), u
     (t_{i + 1})) \big| t_0 <\cdots < t_{k + 1_{}}, \{ t_j\}_{ 0 \leq j \leq k + 1}  \subset J \right\}, $$
  and its \emph{essential variation} is defined as
  $$\essvar (u ; J) \coloneqq \inf \left\{ \var (v ; J) |u = v \text{ a.e. on } J \right\}.$$
\end{definition}

\begin{definition}[BV-curves]\label{def:BV}
   We call $u \in L^1(I:X)$ a curve of bounded variation, or shortly a BV-curve, if $$\essvar (u) \coloneqq\essvar(u ; I)< \infty.$$
    We use $BV(I:X)$ to denote the space of all BV-curves.
\end{definition}
For a non-decreasing function $f \colon I \to \R$, we can define its \emph{variation measure} as the Lebesgue--Stieltjes measure $|Df|$ given by (see \cite[Section 6.3.3]{stein2009real})
$$|Df| ((a, b)) = f (b^-) - f (a^+).$$
Using this, we can generalize the notion of variation measure in general metric spaces: 
\begin{definition}[Variation measure]\label{def:varmeasure}
  Let $u\in BV(I:X)$. The variation measure of $u$ is defined as the Lebesgue--Stieltjes measure $|Du|$ induced by the non-decreasing function $V \colon I \to \R$ defined as $V(t) \coloneqq \essvar (u ; (0, t))$.
\end{definition}
By Lebesgue(--Radon--Nikodym) decomposition, the variation measure can be written as
\begin{equation}\label{eq:Leb_dec_var_measure}
    |Du| = |\dot u| \mathcal{L}^1+ |Du|^{C} + |Du|^{J},
\end{equation}
where $|\dot u|$ denotes the Radon--Nikodym derivative of the variation measure with respect to the Lebesgue measure, $|Du|^{J}$ is the purely atomic part, or the ``jump part,'' and the remaining term $|Du|^{C}$ is the continuous singular part or the ``Cantor part''. 
The density $|\dot{u}|(t)$ actually coincides almost everywhere with the \emph{metric derivative} $|\dot{u}_t|$, hence the notation (see \cref{lemma:metric_derivative} at the end of this subsection).
In this paper, we preserve the term \emph{metric speed} only for the metric derivative of \emph{absolutely continuous} curves. We however do not claim this to be a common practice in the literature.

\begin{definition}[Càdlàg curves]\label{def:cadlag}
   A curve $u\colon I\to X$ is called a Càdlàg curve if it is right-continuous with left-limits, i.e., for every $t \in I $ we have that
\begin{align}
    u(t)=\lim_{s\searrow t}u(s), \, \exists \lim_{s\nearrow t}u(s).
\end{align}
We use $D(I:X)$ to denote the set of all Càdlàg curves and $\mathcal{BV}(I:X)$ to denote Càdlàg curves of bounded variation. 
\end{definition}

The reason for introducing Càdlàg curves here is that any BV-curve admits a Càdlàg representative, i.e., they coincide $\mathcal{L}^1$-a.e. This is stated in the lemma below:

\begin{lemma}[Càdlàg-representation of BV-curves]\label{lemma:normalizedBV}
\indent\begin{enumerate}
    \item\label{item:2.5.1} Let $u\in D(I:X)$. For any $0\leq a \leq b\leq 1$, 
  \begin{equation}\label{eq:PBVachieved}
    \var (u ;(a,b))= \essvar(u;(a,b))=|Du|((a,b))
  \end{equation}
  and if $u$ is left-continuous at $t=1$, $\var(u;I)=\essvar(u;I)$.
  \item Any $u\in BV(I:X)$ admits a representative $\tilde{u}\in D(I : X)$ such that
  \begin{equation}
  \essvar (u) = \var (\tilde{u}).
  \end{equation}
\end{enumerate} 
\end{lemma}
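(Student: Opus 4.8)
The plan is to establish part (1) first and then deduce part (2) from it. Throughout, write $V(t) = \essvar(u;(0,t))$ for the variation function and $j(t) = d(u(t^-),u(t))$ for the jump of a Càdlàg curve at $t$, using right-continuity $u(t^+)=u(t)$. The three quantities in \eqref{eq:PBVachieved} are compared in the order $\essvar(u;(a,b)) \le \var(u;(a,b))$, which is immediate since $u$ is an admissible competitor for itself, followed by $\var(u;(a,b)) \le \essvar(u;(a,b)) = |Du|((a,b))$. For the nontrivial inequality $\var(u;(a,b)) \le \essvar(u;(a,b))$ I would use right-continuity to replace arbitrary partition points by points of a full-measure set. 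Fix a competitor $v = u$ a.e. on $(a,b)$ and a partition $a < t_0 < \cdots < t_{k+1} < b$. Since $\{u=v\}$ has full measure it is dense, so for each $i$ one can choose $s_i \in \{u=v\}$ with $s_i \searrow t_i$, keeping $s_0 < \cdots < s_{k+1} < b$; by right-continuity $u(s_i) \to u(t_i)$, whence $\sum_i d(u(t_i),u(t_{i+1})) = \lim \sum_i d(v(s_i),v(s_{i+1})) \le \var(v;(a,b))$. Taking the supremum over partitions and the infimum over $v$ gives $\var(u;(a,b)) = \essvar(u;(a,b))$.

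For $\essvar(u;(a,b)) = |Du|((a,b)) = V(b^-)-V(a^+)$ I would apply the equality just proved (with left endpoint $0$) to identify $V(t) = \var(u;(0,t))$, and then compute the one-sided limits of $V$. A monotone-exhaustion argument gives $V(b^-) = \var(u;(0,b))$, while the additivity $\var(u;(0,t)) = \var(u;(0,a)) + j(a) + \var(u;(a,t))$ together with $\lim_{t\searrow a}\var(u;(a,t)) = 0$ yields $V(a^+) = \var(u;(0,a)) + j(a)$. This vanishing is worth isolating as a sublemma: if $\lim_{t\searrow a}\var(u;(a,t)) = L>0$, then $\var(u;(a,s)) \ge L$ for every $s>a$ lets one extract infinitely many pairwise disjoint partitions accumulating at $a$, each of variation $>L/2$, contradicting $\essvar(u)<\infty$. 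Subtracting and invoking additivity once more gives $V(b^-)-V(a^+) = \var(u;(a,b))$. The endpoint statement then follows because Càdlàg-ness forces $j(0)=0$ and left-continuity at $1$ forces $j(1)=0$, so no atom is dropped at either end and $\var(u;I) = \var(u;(0,1)) = \essvar(u;I)$.

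For part (2) I would regularize by right-limits. Pick a representative $v = u$ a.e. with $\var(v;I) < \infty$, possible since $\essvar(u)<\infty$. Because $W(t) := \var(v;[0,t])$ is non-decreasing and $d(v(s),v(s')) \le |W(s)-W(s')|$, Cauchyness of $W$ along one-sided sequences transfers to $v$, so (using completeness of $X$) the one-sided limits $v(t^\pm)$ exist everywhere and $v$ is continuous off a countable set. Define $\tilde u(t) := v(t^+)$ for $t<1$ and $\tilde u(1) := v(1^-)$. A limit-of-limits computation shows that $\tilde u$ is right-continuous with left limits and left-continuous at $1$, and that $\tilde u = v = u$ a.e., whence $\essvar(\tilde u) = \essvar(u)$. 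Applying part (1) to the Càdlàg, left-continuous-at-$1$ curve $\tilde u$ then gives $\var(\tilde u) = \essvar(\tilde u) = \essvar(u)$, as required.

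I expect the main obstacle to be the first equality, and in particular the careful bookkeeping of the jump atoms $j(a)$, $j(b)$ when passing between the pointwise variation of open and half-open intervals and the Lebesgue--Stieltjes value $V(b^-)-V(a^+)$; the vanishing of $\var(u;(a,t))$ as $t\searrow a$ and the existence of one-sided limits in the (complete) space $X$ are the supporting facts that make this bookkeeping rigorous.
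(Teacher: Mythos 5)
Your proof is correct. The core mechanism is the same as the paper's: right-continuity lets you push partition points slightly to the right, into the full-measure set where the curve agrees with a competitor, and part (2) is obtained by the right-limit regularization $\tilde u(t)=v(t^+)$ of a finite-variation representative. What differs is the organization. The paper proves (2) first: it takes a minimizing sequence $u_n$ of representatives with $\var(u_n)\searrow\essvar(u)$, shows their right limits agree and define the Càdlàg representative $\tilde u$, and bounds $\var(\tilde u)\le\liminf_n\var(u_n)$ by shifting partition points; statement (1) then follows because a Càdlàg curve is the (essentially unique) Càdlàg representative of its own class. You instead prove (1) directly by comparing $u$ against an arbitrary single competitor $v$ — which avoids the minimizing sequence altogether — and then derive (2) by applying (1) to the regularized curve; this is a slightly cleaner logical arrangement. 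Your identification $\essvar(u;(a,b))=|Du|((a,b))=V(b^-)-V(a^+)$ is actually more explicit than the paper's: you isolate the sublemma $\var(u;(a,t))\to 0$ as $t\searrow a$ and track the atom $j(a)=d(u(a^-),u(a))$ in $V(a^+)=\var(u;(0,a))+j(a)$, whereas the paper's closing display uses the same additivity without comment. One cosmetic slip: $j(0)$ is not defined (there is no left limit at $t=0$); the correct statement, which you clearly intend, is that right-continuity at $0$ and left-continuity at $1$ allow the endpoints to be absorbed into interior partition points so that $\var(u;I)=\var(u;(0,1))$.
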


\begin{proof}
It suffices to consider the nontrivial case when $u$ has finite variation. Since Càdlàg representation if exists must be unique (up to the value at $t=1$), we only need to show that any BV-curve has a Càdlàg representation satisfying \eqref{eq:PBVachieved}.\\
Let $u\in BV(I:X)$. By definition, there is a sequence of $u_n:I\rightarrow X$, such that $u_n=u$ a.e. and $\essvar(u)\swarrow\var(u_n)<\infty$. For each $n$, the function $t \mapsto \var (u_n ; (0, t))$ is non-decreasing so it has left and right limits at each $t \in (0, 1)$ and is continuous at all $t \in I \setminus N_n$, where $N_n$ is at most countable. Then by the completeness of $(X, d)$, $u_n$ must have left and right limits at each $t\in (0, 1)$ and be continuous at all $t \in I \setminus N_n$ as well. As the family $\{u_n\}$ coincides a.e., at each $t \in I$, the right limit $u_n (t^+)$ equals for all $n$, which will be denoted by $\tilde{u} (t)$. Clearly, on $I \setminus \cup_n N_n$, $u_n=\tilde{u}$ for all $n$, ensuring $\tilde{u}$ is a representative of $u$.\\
Notice that if a function has right limits everywhere then its corresponding right limit function is right continuous. Therefore, $\tilde{u}$ is right-continuous on $[0, 1)$.
And the existence of left limits is a direct consequence of $\tilde{u}\in BV$.
  So for its variation, given any $\varepsilon > 0$ and partition $0 \leq t_0 < \cdots < t_{k + 1} \leq 1$, we can show for any $n \in \mathbb{N}$,
  \begin{align}
    \sum_{i = 0}^k d (\tilde{u} (t_i), \tilde{u} (t_{i + 1})) & =  \sum_{i =0}^k d (u_n (t_i^+), u_n (t_{i + 1}^+))\\
    & \leq  \sum_{i = 0}^k d (u_n (t_i^{} + r_n), u_n (t_{i + 1}^{}+r_n))+ \frac{\varepsilon}{2^i}\\
    & \leq  \var (u_n) + 2\varepsilon , \label{eq:finitevar}
  \end{align}
  where $0<r_n \ll 1 - t_k$ and assume $u_n (t) \coloneqq u_n(1^-)$ whenever $t\geq 1$ (in fact, without loss of generality, each $u_n$ can be chosen as left-continuous at $t= 1$ as this never increases $\var (u_n)$). After taking supremum of division and passing $\varepsilon$ to $0$, one concludes
  \[ 
  \var(\tilde{u})\leq\liminf_{n \rightarrow \infty}\var(u_n)=\essvar(u).
  \]
 The case for general sub-interval $(a,b)\subset I$ follows by the same argument. Finally, from the definition of variation measure, 
\begin{align}
     |Du|((a,b))&=\lim_{r\searrow 0} \left[ \essvar(u;(0,b-r))-\essvar(u;(0,a+r)) \right]\\
     &=\lim_{r\searrow 0}\left[\var(u;(0,b-r))-\var(u;(0,a+r)) \right]\\
  & = \lim_{r\searrow 0}\var (\tilde{u};[a+r,b - r))=  \var (\tilde{u};(a,b))
\end{align}
where the last line comes directly from the definition of pointwise variation.
\end{proof}

\begin{rem}\label{remark:extendBV}
Given $E\subset I$ with $\L^1(E)=1$, if a function $u$ defined on $E$ has finite pointwise variation, then we can extend $u$ to $I $ with $\var(u;E)=\var(u;I)$.\\
Indeed, arguing as in \cref{lemma:normalizedBV}, at each $t\in I$
\begin{equation}
    \lim_{E\ni \tau\searrow t}u(\tau)
\end{equation}
exists and for $t\in I\setminus E$ we define $u(t)$ as the above limit. On $I\setminus E$, $u$ is right-continuous so the variation will not increase after extension.
\end{rem}
\begin{lemma}\label{lemma:lscPBV}
The function $\essvar\colon L^1(I:X)\rightarrow [0,+\infty]$ is lower semi-continuous.
\end{lemma}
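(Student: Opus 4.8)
The plan is to prove the sequential lower semicontinuity inequality
$$\essvar(u) \leq \liminf_{n\to\infty}\essvar(u_n)$$
whenever $u_n \to u$ in $L^1(I:X)$, i.e. $\int_I d(u_n(t),u(t))\d t \to 0$. First I would reduce to a favourable subsequence: passing to a subsequence I may assume that $\essvar(u_n)$ converges to $L:=\liminf_n\essvar(u_n)$ (and if $L=+\infty$ there is nothing to prove), and, since $L^1$-convergence forces convergence in measure, passing to a further subsequence I may assume that $u_n(t)\to u(t)$ in $(X,d)$ for every $t$ in a set $G\subset I$ of full measure. By \cref{lemma:normalizedBV}(2) each $u_n$ has a representative $v_n$ with $\var(v_n)=\essvar(u_n)$, and $u$ has one, $\tilde u$, with $\var(\tilde u)=\essvar(u)$; let $Z_n$ and $Z$ be the null sets where $u_n\neq v_n$ and $u\neq\tilde u$. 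Then for every tuple $s_0<\cdots<s_{k+1}$ all of whose nodes avoid $Z_n$ one has the pointwise bound $\sum_{i=0}^k d(u_n(s_i),u_n(s_{i+1}))=\sum_{i=0}^k d(v_n(s_i),v_n(s_{i+1}))\leq\var(v_n)=\essvar(u_n)$.

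The second ingredient is a characterisation of $\essvar(u)=\var(\tilde u)$ by partitions whose nodes are constrained to a prescribed full-measure set. Put $G':=G\setminus(Z\cup\bigcup_n Z_n)$, which is still of full measure. I claim
$$\var(\tilde u)=\sup\left\{\sum_{i=0}^k d(\tilde u(s_i),\tilde u(s_{i+1})) : k\in\N,\ s_0<\cdots<s_{k+1},\ s_i\in G'\right\}.$$
The inequality ``$\geq$'' is trivial. For ``$\leq$'' I would start from an arbitrary partition $t_0<\cdots<t_{k+1}$ and use that $\tilde u$ is Càdlàg: by right-continuity (and by left-continuity at $t=1$, which holds for the full-variation representative by \cref{lemma:normalizedBV}(1)) each node can be moved slightly, into the dense full-measure set $G'$, without decreasing the partition sum by more than a prescribed amount.

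Finally I would combine the two ingredients. Fix $c<\var(\tilde u)=\essvar(u)$ and, by the characterisation above, choose a partition $t_0<\cdots<t_{k+1}$ with nodes where $\tilde u=u$ and $\sum_i d(\tilde u(t_i),\tilde u(t_{i+1}))>c$; by right-continuity the sum stays $>c$ for all tuples in a small product box $\prod_i(t_i,t_i+\eta)$, a set of positive measure. Since $G'$ is of full measure, the set of tuples all of whose nodes lie in $G'$ is of full measure, so it meets this box: there is a single tuple $s_0<\cdots<s_{k+1}$ with $s_i\in G'$ and $\sum_i d(u(s_i),u(s_{i+1}))>c$. At this tuple $u_n(s_i)\to u(s_i)$ for each $i$, whence, by continuity of $d$ and finiteness of the sum,
$$c<\sum_{i=0}^k d(u(s_i),u(s_{i+1}))=\lim_{n\to\infty}\sum_{i=0}^k d(u_n(s_i),u_n(s_{i+1}))\leq\liminf_{n\to\infty}\essvar(u_n)=L,$$
the last inequality being the pointwise bound from the first paragraph, valid since $s_i\notin Z_n$ for all $n$. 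Letting $c\nearrow\essvar(u)$ gives the claim.

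The step I expect to be the main obstacle is producing a single admissible tuple that is simultaneously nearly optimal for $u$ and legitimate for every $u_n$: individually the partition exceeding $c$ and the null exceptional sets $Z_n$ are easy, but one must exhibit one tuple enjoying both properties at once. This is exactly what the ``positive-measure box meets full-measure set'' argument delivers, and it relies on right-continuity of the Càdlàg representative (to make $\{\,\text{sum}>c\,\}$ contain a box of positive measure) together with the countability of the subsequence (to keep $\bigcup_n Z_n$ null). The only other delicate point is the right endpoint $t=1$, which cannot be approached from the right; this is handled by the left-continuity at $1$ of the full-variation representative guaranteed by \cref{lemma:normalizedBV}(1).
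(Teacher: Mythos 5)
Your proof is correct and follows essentially the same route as the paper's: pass to a subsequence converging pointwise on a full-measure set, normalize to C\`adl\`ag representatives achieving the essential variation via \cref{lemma:normalizedBV}, and bound partition sums of $u$ at points of pointwise convergence by $\liminf_n\essvar(u_n)$. The only cosmetic difference is that where the paper invokes \cref{remark:extendBV} to get $\essvar(u)\le\var(u;E)$, you instead show directly (via right-continuity of the C\`adl\`ag representative) that $\var(\tilde u)$ is already attained by partitions with nodes restricted to a full-measure set; these are two equivalent ways of packaging the same fact.
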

\begin{proof}
Let $(u_n)_{n\in\N}\subset L^1(I:X)$ be a sequence converging to $u\in L^1(I:X)$, that is
\begin{equation}
\int_{I}d(u_n(t),u(t))\d t\rightarrow 0,\quad n\rightarrow \infty.
\end{equation}
Without loss of generality, we may assume that $\essvar (u_n)<\infty$ for all $n\in \N$.
By \cref{lemma:normalizedBV}, we can assume each $u_n$ to achieve \eqref{eq:PBVachieved}. As $(X,d)$ is complete, up to picking a subsequence, $u_n(t)$ converges to $u(t)$ on some $E\subset I$ with full measure, yielding
\begin{equation}
\var(u;E)\leq\liminf_{n\rightarrow \infty}\var(u_n)=\liminf_{n\rightarrow \infty}\essvar(u_n).
\end{equation}
By \cref{remark:extendBV}, $\essvar(u)$ is bounded by $\var(u;E)$ and hence $\essvar$ is lower semi-continuous.
\end{proof}

To end this subsection, we briefly comment on the relation between the variation measure and the metric derivative. 
Recall that the \emph{metric derivative} of $u:I\to X$ at time $t$ is defined by 
\begin{equation}\label{eq:metric_derivative}
    |\dot{u}_t| \coloneqq \lim_{h\to 0}\frac{d(u(t),u(t+h))}{|h|}
\end{equation}
whenever the above limit exists.
\begin{lemma}\label{lemma:metric_derivative}
    Let $u\in\mathcal{BV}(I:X)$. Then for $\L^1$-a.e. $t \in I$, the metric derivative $|\dot{u}_t|$ exists and
    \begin{equation}
       |\dot{u}_t|=|\dot{u}|(t)=\lim_{h\to 0}\frac{|Du|([t,t+h])}{|h|},
    \end{equation}
    where $|\dot{u}|$ is the density in the decomposition \eqref{eq:Leb_dec_var_measure}.
\end{lemma}
\begin{proof}
It is known, e.g. from \cite[Theorem 2.2]{Ambrosio1990MetricSV}, that the metric derivative $|\dot{u}_t|$ exists almost everywhere and equals to the density $|\dot{u}|(t)$.
The second equality is a general fact for measures on $\R$ by the following argument.  
Assume that $\mu$ is a locally finite measure on $\R$ with the decomposition $\mu=\rho\L^1+\mu^s$, where $\mu^s\perp \L^1$. 
By the Lebesgue differentiation theorem, it suffices to show 
\begin{equation}
     \lim_{h\to 0}\frac{\mu^s([t,t+h])}{|h|}=0\quad \textrm{for } \L^1\text{-a.e. }t \in I.
\end{equation}
If else, there exists a Borel set $T\subset \R$ and some $c>0$ such that $\L^1(T)>0$ and
\begin{equation}
    \limsup_{h\to 0}\frac{\mu^s([t,t+h])}{|h|}>c,\quad  \forall t\in T.
\end{equation}
Based on the standard differentiation theorem of measures (cf. \cite[Theorem 2.4.3]{Ambrosio-Tilli}), $\mu^s(T)>0$, which contradicts the fact that $\mu^s$ and $\L^1$ are mutually singular.
\end{proof}

\subsection{Skorokhod space}\label{sec:Skorokhod}
As alluded in the previous subsection, Càdlàg curves are important in the study of BV-curves. 
The space of Càdlàg curves $D (I:X)$ can be equipped with a metric, known as Skorokhod metric, which turns it into a complete and separable space, known as Skorokhod space.
Recall from \eqref{eq:inclusion_AC}-\eqref{eq:inclusion_BV} that $D (I:X)$ with the Skorokhod topology plays the role of $C (I:X)$ with the topology of uniform convergence.
The goal of this subsection is to recall necessary notions of Skorokhod space.

\begin{definition}[Skorokhod space] For two curves $\gamma,\tilde{\gamma} \in D(I:X)$, define a distance
    \begin{align}
        d_{Sk}(\gamma,\tilde{\gamma}) \coloneqq \inf_\lambda\max\{\lVert\lambda\rVert_B, d_{\sup}(\gamma,\tilde{\gamma}\circ \lambda)\},
    \end{align}
    where the infimum runs over all increasing homeomorphisms $\lambda\colon I\to I$, and
    \begin{align}
        \lVert\lambda\rVert_B\coloneqq \sup_{0\le s<t\le 1} \big\lvert \log\big(\frac{\lambda(t)-\lambda(s)}{t-s}\big)\big\rvert.
    \end{align}
    The set $D(I:X)$ equipped with the distance $d_{Sk}$ is called the Skorokhod space.
\end{definition}

By definition, a sequence of curves $\gamma_n\in D(I:X)$ converges to $\gamma\in D(I:X)$ if and only if there exists functions $\{\lambda_n\}$ such that $d_{\sup}(\gamma,\gamma_n\circ \lambda_n)\to 0$ and $ \lVert\lambda_n\rVert_B\to 0$ where the latter ensures $|\lambda_n(t)-t|\to 0$ uniformly in $t$.
\begin{theorem}[Billingsley-Skorokhod]
    Let $X$ be complete and separable metric space. Then the Skorokhod space $(D(I:X),d_{Sk})$ is complete and separable.
\end{theorem}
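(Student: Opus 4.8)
This is a classical result of Billingsley, and the plan is to treat separability and completeness separately, the latter being the substantive part. Throughout I will use three elementary facts. First, the reparametrization functional is subadditive under composition, $\lVert\lambda\circ\mu\rVert_B\le \lVert\lambda\rVert_B+\lVert\mu\rVert_B$, and invariant under inversion, $\lVert\lambda^{-1}\rVert_B=\lVert\lambda\rVert_B$; both follow at once by writing the logarithm of a difference quotient of a composite (resp. an inverse) as a sum (resp. a sign change) of logarithms of difference quotients. Second, precomposing both arguments of $d_{\sup}$ with a common homeomorphism leaves it unchanged, $d_{\sup}(\gamma\circ\lambda,\tilde\gamma\circ\lambda)=d_{\sup}(\gamma,\tilde\gamma)$. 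Third, a bound $\lVert\mu\rVert_B\le\eta$ forces the slopes of $\mu$ into $[e^{-\eta},e^{\eta}]$, hence $\sup_t\lvert\mu(t)-t\rvert\le e^{\eta}-1$.

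For separability, fix a countable dense set $X_0\subset X$ and let $A$ be the countable family of right-continuous step curves that are constant on finitely many blocks $[q_{i-1},q_i)$ with rational endpoints and values in $X_0$. To see $A$ is dense, I would invoke the standard structural lemma for càdlàg curves: for every $\gamma\in D(I:X)$ and $\varepsilon>0$ there is a partition $0=t_0<\cdots<t_m=1$ with oscillation less than $\varepsilon$ on each $[t_{i-1},t_i)$, a consequence of there being only finitely many jumps of size $\ge\varepsilon$. Replacing $\gamma$ on each block by a point of $X_0$ within $\varepsilon$ of $\gamma(t_{i-1})$ costs at most $2\varepsilon$ in $d_{\sup}$, hence in $d_{Sk}$ with $\lambda=\id$; moving the breakpoints $t_i$ to nearby rationals is realized by a piecewise-linear homeomorphism with slopes arbitrarily close to $1$, hence of arbitrarily small $\lVert\cdot\rVert_B$. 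Composing the two estimates places an element of $A$ within $3\varepsilon$ of $\gamma$.

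For completeness, let $(\gamma_n)$ be $d_{Sk}$-Cauchy and pass to a subsequence with $d_{Sk}(\gamma_{n_k},\gamma_{n_{k+1}})<2^{-k}$; choose near-optimal homeomorphisms $\mu_k$, so that $\lVert\mu_k\rVert_B<2^{-k}$ and $d_{\sup}(\gamma_{n_k},\gamma_{n_{k+1}}\circ\mu_k)<2^{-k}$. Set $\sigma_k\coloneqq\mu_{k-1}\circ\cdots\circ\mu_1$ (with $\sigma_1=\id$) and $\psi_k\coloneqq\gamma_{n_k}\circ\sigma_k$. By the reparametrization-invariance of $d_{\sup}$ applied to the common factor $\sigma_k$, one gets $d_{\sup}(\psi_k,\psi_{k+1})=d_{\sup}(\gamma_{n_k},\gamma_{n_{k+1}}\circ\mu_k)<2^{-k}$, so $(\psi_k)$ is uniformly Cauchy and, by completeness of $X$, converges uniformly to a limit $\psi$, which is again càdlàg as a uniform limit of càdlàg curves. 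Meanwhile subadditivity gives $\lVert\sigma_k\rVert_B\le\sum_j 2^{-j}\le 1$, so the slopes of $\sigma_k$ stay in $[e^{-1},e]$, while $\sup_t\lvert\mu_k(t)-t\rvert\le e^{2^{-k}}-1$ is summable; hence $(\sigma_k)$ is itself uniformly Cauchy and converges to an increasing bi-Lipschitz homeomorphism $\sigma_\infty$ of $I$. Define $\gamma\coloneqq\psi\circ\sigma_\infty^{-1}\in D(I:X)$. Writing $\sigma_\infty=\tau_k\circ\sigma_k$ with the tail $\tau_k=\lim_m\mu_{m-1}\circ\cdots\circ\mu_k$, subadditivity yields $\lVert\tau_k\rVert_B\le\sum_{j\ge k}2^{-j}\to0$. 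Choosing the reparametrization $\lambda_k=\tau_k=\sigma_\infty\circ\sigma_k^{-1}$ gives $\gamma\circ\lambda_k=\psi\circ\sigma_k^{-1}$, whence $d_{\sup}(\gamma_{n_k},\gamma\circ\lambda_k)=d_{\sup}(\gamma_{n_k}\circ\sigma_k,\psi)=d_{\sup}(\psi_k,\psi)$, and therefore $d_{Sk}(\gamma_{n_k},\gamma)\le\max\{\lVert\tau_k\rVert_B,d_{\sup}(\psi_k,\psi)\}\to0$. Since a Cauchy sequence with a convergent subsequence converges, $\gamma_n\to\gamma$ and the space is complete.

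The main obstacle is precisely the completeness step: passing an infinite composition of time-reparametrizations to a limit that is again a homeomorphism and controlling the residual reparametrization cost. This is exactly where the Billingsley slope functional $\lVert\cdot\rVert_B$ is indispensable rather than the naive displacement $\sup_t\lvert\lambda(t)-t\rvert$: subadditivity under composition keeps the total slope bounded, so the limit $\sigma_\infty$ stays bi-Lipschitz and never degenerates into a non-invertible map, and inversion-invariance guarantees that the leftover reparametrization $\tau_k$ matching $\gamma_{n_k}$ to the limit has $\lVert\tau_k\rVert_B\to0$. The remaining ingredients — the oscillation lemma for càdlàg curves and the fact that uniform limits of càdlàg curves are càdlàg — are routine and can be cited.
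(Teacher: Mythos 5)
Your argument is correct and is precisely the classical Billingsley proof (his Theorem for the metric $d^\circ$ in Section 12, with $\R$ replaced by a complete separable metric space), which is exactly what the paper does: it gives no proof of its own and simply cites that argument. The key points — subadditivity and inversion-invariance of $\lVert\cdot\rVert_B$, the telescoped reparametrizations $\sigma_k$ converging to a bi-Lipschitz homeomorphism, and the vanishing tail $\tau_k=\sigma_\infty\circ\sigma_k^{-1}$ — are all handled correctly.
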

The proof can be found in \cite[Section 12]{Billingsley}, where the author only studied $D(I:\R)$ but the argument holds exactly the same replacing Euclidean space with general complete and separable metric space.

The next lemma shows that the topology induced by the distance $d_{Sk}$ is finer than $L^1$-topology.
\begin{lemma}\label{lemma:topoL1-Sk}
Let $(\gamma_i)_i$ and $\gamma$ be in $D(I:X)$ such that $\gamma_i\to\gamma$ in $\big(D (I:X),d_{Sk}\big)$. Then $\gamma_i\to \gamma$ in $L^1(I:X)$.
\end{lemma}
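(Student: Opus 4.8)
The plan is to unpack Skorokhod convergence into a quantitative comparison and then reduce the $L^1$-estimate to a dominated-convergence argument for the fixed limit curve $\gamma$. By definition of $d_{Sk}$, the convergence $\gamma_i \to \gamma$ yields increasing homeomorphisms $\lambda_i \colon I \to I$ with $\lVert \lambda_i \rVert_B \to 0$ and $d_{\sup}(\gamma, \gamma_i \circ \lambda_i) \to 0$. First I would record two consequences of $\lVert \lambda_i\rVert_B \to 0$: since $e^{-\lVert\lambda_i\rVert_B}(t-s) \le \lambda_i(t) - \lambda_i(s) \le e^{\lVert\lambda_i\rVert_B}(t-s)$ for $s<t$, each $\lambda_i$ is bi-Lipschitz and fixes the endpoints of $I$, and a direct computation gives $\lVert \lambda_i^{-1}\rVert_B = \lVert\lambda_i\rVert_B$; hence both $\lambda_i \to \id$ and $\lambda_i^{-1}\to\id$ uniformly on $I$, as already noted after the definition of $d_{Sk}$.

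Next I would split, for each $i$, by the triangle inequality
\begin{equation}
\int_I d(\gamma_i(t),\gamma(t))\d t \le \int_I d\big(\gamma_i(t),\gamma(\lambda_i^{-1}(t))\big)\d t + \int_I d\big(\gamma(\lambda_i^{-1}(t)),\gamma(t)\big)\d t.
\end{equation}
For the first integral, the substitution $t = \lambda_i(s)$ is a bijection of $I$, so the integrand takes the same values as $s\mapsto d(\gamma_i(\lambda_i(s)),\gamma(s))$; bounding the integral over the unit interval by the supremum gives
\begin{equation}
\int_I d\big(\gamma_i(t),\gamma(\lambda_i^{-1}(t))\big)\d t \le \sup_{s\in I} d\big(\gamma_i(\lambda_i(s)),\gamma(s)\big) = d_{\sup}(\gamma,\gamma_i\circ\lambda_i) \to 0.
\end{equation}

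The remaining work, and the main point of the argument, is to show that the second integral tends to $0$. Here $\gamma$ is a fixed Càdlàg curve, so its image $\gamma(I)$ is bounded (a Càdlàg curve on a compact interval is bounded) and $\gamma$ is continuous on $I$ outside an at most countable set. For each continuity point $t$ of $\gamma$ we have $\lambda_i^{-1}(t)\to t$, whence $d(\gamma(\lambda_i^{-1}(t)),\gamma(t))\to 0$; thus the integrand converges to $0$ for $\L^1$-a.e.\ $t$. Since it is dominated by the constant $\diam \gamma(I) < \infty$, the dominated convergence theorem yields that the second integral converges to $0$, which completes the proof. The only subtlety to watch is that the reparametrizations $\lambda_i$ need not be differentiable, so I avoid any change of variables inside the integrand and use the bijectivity of $\lambda_i$ solely to identify suprema; the bi-Lipschitz bound enters only to guarantee the uniform convergence $\lambda_i^{-1}\to\id$ that feeds the pointwise a.e.\ convergence in the final step.
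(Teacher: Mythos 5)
Your proof is correct and follows essentially the same route as the paper's: split via the triangle inequality into a term controlled pointwise by $d_{\sup}(\gamma,\gamma_i\circ\lambda_i)$ (hence by $\varepsilon_i$ after integrating over the unit interval) and a term of the form $\int_I d(\gamma\circ\eta_i(t),\gamma(t))\,\mathrm{d}t$ handled by dominated convergence, using boundedness of the image of the Càdlàg curve and its continuity off a countable set. The only cosmetic difference is that you insert $\gamma\circ\lambda_i^{-1}$ where the paper inserts $\gamma\circ\lambda_i$; your write-up is somewhat more explicit about why dominated convergence applies.
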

\begin{proof}
Let $\lambda_i$, $\varepsilon_i\searrow 0$ so that $d_{\sup}(\gamma\circ\lambda_i(t),\gamma_i(t))\le \varepsilon_i$. Then
\begin{align}
    \int_I d(\gamma(t),\gamma_i(t))\d t&\le \int_Id(\gamma_i(t),\gamma\circ\lambda_i(t))\d t+\int_I d(\gamma\circ \lambda_i(t),\gamma(t))\d t
    \\ &\le \varepsilon_i+\int_I d(\gamma\circ\lambda_i(t),\gamma(t))\d t\to 0,
\end{align}
when $i\to\infty$. 
Here the last term converges to zero due to the almost everywhere continuity of Càdlàg curves (cf. \cite[Lemma 12.1]{Billingsley}) and the dominated convergence theorem.
\end{proof}

\begin{lemma}\label{lma:Boreleval}
The Borel $\sigma$-algebra $\B(D(I:X))$ of the Skorokhod space is equal to the $\sigma$-algebra generated by the evaluation maps. More generally, 
\begin{align}
    \B(D(I:X))=\sigma(e_t: t\in T),
\end{align}
where $T\subset I$ is an arbitrary dense subset of $I$ for which $1\in T$.
\end{lemma}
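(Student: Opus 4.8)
The plan is to establish the two inclusions separately, after first cutting the (a priori uncountable) generating family down to a countable one. Since every $\gamma\in D(I:X)$ is right-continuous, for each $t\in[0,1)$ we have $e_t(\gamma)=\gamma(t)=\lim_{q\searrow t}\gamma(q)=\lim_{q\searrow t}e_q(\gamma)$ along rationals $q$; as a pointwise limit of $\sigma(e_q:q\in\mathbb{Q}\cap I)$-measurable maps into the metric space $X$, each $e_t$ is $\sigma\big(e_q:q\in(\mathbb{Q}\cap I)\cup\{1\}\big)$-measurable (the endpoint $e_1$ being included directly). Hence $\sigma(e_t:t\in I)=\sigma\big(e_q:q\in(\mathbb{Q}\cap I)\cup\{1\}\big)$ is \emph{countably generated}. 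The same right-continuity shows that this $\sigma$-algebra \emph{separates points}: if $\gamma\neq\tilde\gamma$ in $D(I:X)$ they differ at some time, hence, taking right limits along rationals, at some rational time or at $t=1$, so some $e_q$ distinguishes them.

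For the inclusion $\sigma(e_t:t\in I)\subseteq\B(D(I:X))$ it suffices, by the reduction above and separability of $X$, to prove that each $e_t$ is Borel; equivalently that $\gamma\mapsto d(\gamma(t),x)$ is Borel for every $x$ in a countable dense subset of $X$. The key observation is that the map $\Psi_x\colon D(I:X)\to D(I:\R)$, $\Psi_x(\gamma)\coloneqq d(\gamma(\cdot),x)$, is $1$-Lipschitz for the respective Skorokhod metrics: using the same reparametrisation $\lambda$ one has $|d(\gamma(t),x)-d(\tilde\gamma(\lambda(t)),x)|\le d(\gamma(t),\tilde\gamma(\lambda(t)))$, whence $d_{Sk}(\Psi_x(\gamma),\Psi_x(\tilde\gamma))\le d_{Sk}(\gamma,\tilde\gamma)$. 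Since evaluation maps $\pi_t$ on the \emph{scalar} Skorokhod space $D(I:\R)$ are Borel (the classical fact from \cite[Section 12]{Billingsley}), the composition $\gamma\mapsto d(\gamma(t),x)=\pi_t\circ\Psi_x(\gamma)$ is Borel, as desired. I expect this transfer to be the main obstacle: in $D(I:\R)$ one proves measurability of $\pi_t$ by approximating it with the \emph{continuous} averaging maps $\gamma\mapsto\frac1h\int_t^{t+h}\gamma$, a device with no counterpart in a general metric space $X$; the role of $\Psi_x$ is precisely to push the problem into the scalar setting where this linear trick is available.

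It remains to prove $\B(D(I:X))\subseteq\sigma(e_t:t\in I)$. Here I would invoke that $(D(I:X),d_{Sk})$ is Polish (Billingsley--Skorokhod), hence a standard Borel space, together with the descriptive--set--theoretic principle that a countably generated sub-$\sigma$-algebra of a standard Borel space which separates points coincides with the full Borel $\sigma$-algebra. Concretely, let $\{A_n\}_{n\in\N}$ generate $\mathcal{A}\coloneqq\sigma(e_t:t\in I)$ as in the first paragraph, with $A_n\in\B(D(I:X))$ by the second paragraph, and define the Borel injection $f\colon D(I:X)\to\{0,1\}^\N$, $f(\gamma)\coloneqq(\mathds{1}_{A_n}(\gamma))_{n\in\N}$, whose injectivity is exactly the separation of points. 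By the Lusin--Souslin theorem $f$ is a Borel isomorphism onto its Borel image, so $\B(D(I:X))=\sigma(f)=\sigma(\{A_n\})=\mathcal{A}$. Combining the two inclusions yields the claimed equality.

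As an alternative to this last step, one can argue directly that each open ball $\{d_{Sk}(\cdot,\gamma_0)<r\}$ lies in $\mathcal{A}$, reducing the supremum in $d_{\sup}$ to rational times by right-continuity and the infimum over reparametrisations to a countable family of piecewise-linear homeomorphisms; the delicate point there is that $\lambda\mapsto d_{\sup}(\gamma,\gamma_0\circ\lambda)$ is discontinuous at the jump times of $\gamma_0$, which is why I prefer the abstract route above.
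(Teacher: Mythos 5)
Your proof is correct, but for the harder inclusion it takes a genuinely different route from the paper. For $\sigma(e_t)\subseteq\B(D(I:X))$ the paper (in the proof of \cref{prop:right_continuity}) adapts the classical averaging trick directly to the metric setting by applying it to $\psi\circ\gamma$ for $\psi\in C_b(X)$, i.e.\ it shows $\psi\circ e_t=\lim_m \frac1m\int_t^{t+1/m}\psi(\gamma(s))\d s$ with each average continuous on $D(I:X)$ --- so the device you thought had ``no counterpart'' in a general metric space does survive, one just composes with scalar test functions before averaging; your alternative of pushing forward along the $1$-Lipschitz maps $\Psi_x=d(\cdot,x)\circ{}$ into $D(I:\R)$ and quoting the scalar result is equally valid and arguably imports the classical statement more cleanly. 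For the reverse inclusion $\B(D(I:X))\subseteq\sigma(e_t)$ the paper argues constructively: it writes the identity on $D(I:X)$ as the pointwise Skorokhod limit of the piecewise-constant interpolation maps $S_n=\iota_{t_{I_n}}\circ e_{t_{I_n}}$, where $e_{t_{I_n}}$ is $\sigma(e_t)$-measurable and $\iota_{t_{I_n}}\colon X^n\to D(I:X)$ is continuous, so the identity is $(\sigma(e_t),\B)$-measurable and the inclusion follows --- an entirely elementary argument. You instead invoke the Blackwell--Mackey/Lusin--Souslin principle that a countably generated, point-separating sub-$\sigma$-algebra of a standard Borel space is the full Borel $\sigma$-algebra; this is correct (your reduction to rational times plus $t=1$ via right-continuity gives countable generation and separation of points, and Polishness of the Skorokhod space is \cref{prop:right_continuity}'s companion theorem), and it is shorter on the page, but it leans on substantially heavier descriptive-set-theoretic machinery than the paper needs. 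Both arguments establish the stronger version with a dense time set $T\ni 1$ that the paper records in its remark.
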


\begin{proof}
The proof goes as in the real valued case, see \cite{Billingsley}.
\\
By \cref{prop:right_continuity}, it suffices to prove that $\B(D(I:X))\subset \sigma(e_t:t\in T)$. Notice that by writing $e_0=\lim_{t_i\searrow 0}e_{t_i}$ for some sequence $(t_i)\subset T$, we obtain Borel measurability of $e_0$. Thus, we may assume that $0\in T$.
\\
The maps $e_{t_J}\coloneqq (e_{t_1},\dots,e_{t_n})\colon D(I:X)\to X^n$ are $(\sigma(e_t:t\in T), B(X^n))$-measurable by definition for all $t_J=(t_1,\dots, t_n)$ and $|J|\coloneqq n\in \N$. 
Moreover, for a partition $t_J$, $|J|=n$, of $[0,1]$, the map $\iota_{t_J}\colon X^n\to D(I:X)$, defined as $\iota_{t_J}(x)(t)\coloneqq x_i$ for $t\in[t_i,t_{i+1})$, $1\le i<n$, and $\iota_{t_J}(x)(1)=x_n$, is continuous, and therefore Borel measurable.
\\
Let now $(t_{J_n})$, $t_{J_n}\subset T$, be a sequence of partitions of $I$ with mesh $|t_{J_n}|$ going to zero. Then by above we have that the composition map $S_n\coloneqq \iota_{t_{J_n}}\circ e_{t_{J_n}}$ is $(\sigma(e_t:t\in T),\B(D(I:X)))$-measurable. Moreover, we have that $\id_{D(I:X)}=\lim_{n\to\infty}S_n$.
Thus the identity is $(\sigma(e_t:t\in T),\B(D(I:X)))$-measurable and hence $\B(D(I:X))\subset \sigma(e_t:t\in T)$, which concludes the proof.
\end{proof}

\subsection{Further auxiliary results}\label{sec:auxiliar}
Here we collect some results that are used later in the proof of main theorems.
The first statement is about the lower semi-continuity of pointwise variation, which follows immediately by combining \cref{lemma:topoL1-Sk} together with \cref{lemma:lscPBV} and \cref{lemma:normalizedBV} (and taking into account possible discontinuities at $t=1$):

\begin{lemma}\label{lma:varlsc}
The pointwise variation $\var\colon D(I:X)\to [0,\infty]$ is lower semi-continuous.
\end{lemma}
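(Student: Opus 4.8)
The plan is to reduce the claim to the lower semicontinuity of the \emph{essential} variation, which \cref{lemma:lscPBV} already provides along $L^1$-convergent sequences, and to feed in \cref{lemma:topoL1-Sk} so that Skorokhod convergence supplies the required $L^1$-convergence. The only thing standing in the way is the right endpoint: for a Càdlàg curve $u$, the identity $\var(u;I)=\essvar(u;I)$ coming from \cref{lemma:normalizedBV} holds only when $u$ is left-continuous at $t=1$, whereas a Skorokhod limit may well carry a genuine jump at $t=1$, and such a jump is invisible to $\essvar$. Splitting $\var$ into an open-interval part plus an endpoint term does not help, because an endpoint jump of the limit can be approximated either by endpoint jumps or by interior jumps of the approximants. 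So the heart of the argument is to push the endpoint discontinuity into the interior, where it is seen by both $\var$ and $\essvar$.

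Concretely, given $\gamma_i\to\gamma$ in $(D(I:X),d_{Sk})$, I would extend every curve to $[0,2]$, making it constant and equal to its value at $t=1$ on $(1,2]$; call the extensions $\tilde\gamma_i$ and $\tilde\gamma$. Three facts must be checked, all routine. First, the extended curves are again Càdlàg on $[0,2]$ and are left-continuous at the new right endpoint $t=2$ (they are constant near it), so the last assertion of \cref{lemma:normalizedBV} gives $\var(\tilde\gamma;[0,2])=\essvar(\tilde\gamma;[0,2])$, and likewise for each $\tilde\gamma_i$. Second, making a curve constant on $(1,2]$ adds nothing to its variation, while the former endpoint jump at $t=1$ is now an interior jump that is still counted, so $\var(\tilde\gamma;[0,2])=\var(\gamma;I)$ and $\var(\tilde\gamma_i;[0,2])=\var(\gamma_i;I)$. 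Third, the extension is continuous for the Skorokhod topologies: extending the reparametrizations $\lambda_i$ by the identity on $[1,2]$ keeps $\lVert\lambda_i\rVert_B\to 0$, since the mixed slopes across $t=1$ are squeezed between $e^{\pm\lVert\lambda_i\rVert_B}$, and it preserves the uniform estimate because $\lambda_i(1)=1$ already forces $d(\gamma(1),\gamma_i(1))\to 0$. Hence $\tilde\gamma_i\to\tilde\gamma$ in the Skorokhod space over $[0,2]$.

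With these in hand the conclusion is immediate: by \cref{lemma:topoL1-Sk} the convergence $\tilde\gamma_i\to\tilde\gamma$ is also an $L^1$-convergence, so \cref{lemma:lscPBV} applies and, combining the three points above,
\begin{align}
\var(\gamma;I)=\var(\tilde\gamma;[0,2])=\essvar(\tilde\gamma;[0,2])&\le\liminf_{i\to\infty}\essvar(\tilde\gamma_i;[0,2])\\
&=\liminf_{i\to\infty}\var(\tilde\gamma_i;[0,2])=\liminf_{i\to\infty}\var(\gamma_i;I),
\end{align}
which is exactly the asserted lower semicontinuity. Since \cref{lemma:lscPBV,lemma:normalizedBV} are phrased on $I=[0,1]$, one either rescales $[0,2]$ back to $I$ by $t\mapsto t/2$ — a monotone reparametrization, under which the variation is unchanged and both the $L^1$- and the Skorokhod-convergence persist — or simply notes that those lemmas hold verbatim on any compact interval. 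I expect the only genuinely technical step to be the Skorokhod-continuity of the constant extension in the third point; the variation and left-continuity bookkeeping is straightforward, and the conceptual content is entirely the observation that extending the interval converts the troublesome endpoint jump into an interior one and thereby reconciles $\var$ with $\essvar$ at $t=1$.
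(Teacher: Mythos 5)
Your proposal is correct and follows the route the paper itself takes: Skorokhod convergence implies $L^1$-convergence (\cref{lemma:topoL1-Sk}), lower semicontinuity of $\essvar$ (\cref{lemma:lscPBV}), and the identification $\var=\essvar$ for Càdlàg curves (\cref{lemma:normalizedBV}). The paper only gestures at the endpoint issue with the phrase ``taking into account possible discontinuities at $t=1$''; your constant-extension to $[0,2]$, which turns the endpoint jump into an interior one while preserving variation and Skorokhod convergence, is a clean and correct way to supply that missing detail.
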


The next proposition concerns the identification in \eqref{eq:inclusion_BV}: 
\begin{proposition}[Borel Selection]\label{lemma:selection}
    For any $\gamma\in BV(I:X)$, we let $\tilde{\gamma}$ denote the Càdlàg-representative left-continuous at $t=1$.
    Then the selection map $T\colon BV(I:X)\subset L^1\to D(I:X)$, $\gamma\mapsto\tilde\gamma$, is a Borel map.
\end{proposition}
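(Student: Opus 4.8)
The plan is to reduce the Borel measurability of $T$ to a one-dimensional limit statement. First I would record that the domain is itself Borel: by \cref{lemma:lscPBV} the functional $\essvar$ is lower semi-continuous on $L^1(I:X)$, so $BV(I:X)=\{\essvar<\infty\}=\bigcup_{n\in\N}\{\essvar\le n\}$ is an $F_\sigma$-set and in particular Borel. Thus it is meaningful to ask for $T$ to be Borel, and it suffices to show that preimages of a generating family of the Borel $\sigma$-algebra of $D(I:X)$ are Borel in $L^1$.

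By \cref{lma:Boreleval} that $\sigma$-algebra is generated by the evaluations $e_t$ ranging over a countable dense set $T_0\subset[0,1]$ with $1\in T_0$; hence $T$ is Borel precisely when $\gamma\mapsto e_t(T\gamma)=\tilde\gamma(t)$ is Borel from $BV(I:X)$ to $X$ for every $t\in T_0$. Since $X$ is separable, I would fix a countable dense set $\{x_k\}\subset X$ and recall that the balls $\{B(x_k,q):k\in\N,\ q\in\mathbb{Q},\ q>0\}$ generate $\B(X)$. Consequently $\gamma\mapsto\tilde\gamma(t)$ is Borel as soon as each scalar map $\gamma\mapsto d(\tilde\gamma(t),x_k)$ is Borel, because then every set $\{\gamma:d(\tilde\gamma(t),x_k)<q\}$ is Borel and these sets generate the relevant preimage $\sigma$-algebra.

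The crux is therefore to realize $d(\tilde\gamma(t),x_k)$ as a measurable function of $\gamma\in L^1$. For $t\in[0,1)$ I would use the right-averages
\[
d(\tilde\gamma(t),x_k)=\lim_{m\to\infty} m\int_t^{t+1/m} d(\gamma(s),x_k)\,\d s,
\]
and at $t=1$ the symmetric left-averages $m\int_{1-1/m}^1 d(\gamma(s),x_k)\,\d s$. These limits are valid on $BV(I:X)$: by \cref{lemma:normalizedBV} the representative $\tilde\gamma$ is right-continuous on $[0,1)$ and left-continuous at $1$, so $s\mapsto d(\tilde\gamma(s),x_k)$ tends to $d(\tilde\gamma(t),x_k)$ as $s\searrow t$ (resp.\ $s\nearrow 1$), and since $\gamma=\tilde\gamma$ a.e.\ the integrals of $d(\gamma(\cdot),x_k)$ and $d(\tilde\gamma(\cdot),x_k)$ coincide, forcing the averages to converge to that common limiting value. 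For each fixed $m$ the averaging map $\gamma\mapsto m\int_t^{t+1/m} d(\gamma(s),x_k)\,\d s$ is $m$-Lipschitz on $L^1$, since $|d(\gamma(s),x_k)-d(\gamma'(s),x_k)|\le d(\gamma(s),\gamma'(s))$ integrates to $\lVert\gamma-\gamma'\rVert_{L^1}$. Being a pointwise limit of continuous, hence Borel, functions, $\gamma\mapsto d(\tilde\gamma(t),x_k)$ is Borel, which closes the chain of reductions.

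The main obstacle is conceptual rather than technical: because $X$ carries no linear structure one cannot average the curve itself to recover $\tilde\gamma(t)$, so the argument must be routed through the scalar observables $d(\gamma(\cdot),x_k)$, whose right (resp.\ left) averages do reconstruct the point through the separating family of balls. The remaining points demanding care are purely bookkeeping: verifying that these averages converge to the genuine one-sided limit of $\tilde\gamma$ on $BV$ — and not merely to some essential limit that might differ — and treating the endpoint $t=1$ with left-averages so that the selection lands on the prescribed left-continuous value there.
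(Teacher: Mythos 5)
Your proof is correct, but it takes a genuinely different route from the paper's. The paper observes that the inverse map $\mathfrak{t}^{-1}\colon(\mathcal{BV}^1,d_{Sk})\to(BV(I:X),d_{L^1})$ is continuous by \cref{lemma:topoL1-Sk}, notes that both sides are Borel subsets of Polish spaces (via \cref{lemma:lscPBV} and \cref{lma:varlsc}), and then invokes the Lusin--Souslin theorem (\cite[Proposition 4.5.1]{srivastava1998course}) to conclude that the forward bijection is Borel. You instead verify measurability directly: reduce via \cref{lma:Boreleval} to the Borel measurability of $\gamma\mapsto\tilde\gamma(t)$ for $t$ in a countable dense set containing $1$, reduce further through the separating family of balls to the scalar observables $\gamma\mapsto d(\tilde\gamma(t),x_k)$, and exhibit these as pointwise limits of the $L^1$-Lipschitz right-averages $m\int_t^{t+1/m}d(\gamma(s),x_k)\,\d s$ (left-averages at $t=1$), using the right-continuity of $\tilde\gamma$ on $[0,1)$ and left-continuity at $1$ guaranteed by \cref{lemma:normalizedBV}. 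This is essentially the averaging device the paper itself deploys in Step 1 of \cref{prop:right_continuity}, transplanted from the Skorokhod to the $L^1$ topology. What each approach buys: the paper's argument is shorter given the surrounding lemmas but leans on a nontrivial piece of descriptive set theory; yours is elementary and self-contained, makes transparent exactly where the structure of the Càdlàg representative enters, and correctly handles the endpoint $t=1$ with left-averages so that the selection lands on the prescribed left-continuous value. Both are complete proofs.
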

\begin{proof}
    By \cref{lemma:lscPBV} and \cref{lma:varlsc}, $BV(I:X)$ and $\mathcal{BV}(I:X)$ are Borel subsets of $\big(L^1(I:X),d_{L^1}\big)$ and $\big(D(I:X),d_{Sk}\big)$, respectively.
    Notice that by definition, the subset $\mathcal{BV}^{1}$ of all curves in $\mathcal{BV}$ which is left-continuous at $t=1$ is closed under the Skorokhod metric. So it suffices to show that the following bijection
    \begin{equation}
        \mathfrak{t}\colon (BV(I:X),d_{L^1}) \rightarrow  (\mathcal{BV}^{1},d_{Sk}), \quad \gamma \mapsto \tilde{\gamma},
    \end{equation}
    is a Borel map.
    Due to \cref{lemma:topoL1-Sk}, $\mathfrak{t}^{-1}$ is continuous and the assertion follows after applying \cite[Propsition 4.5.1]{srivastava1998course}.
\end{proof}

The following proposition is needed in the proof of \ref{itm:marginalprop} in \cref{thm:lift_BV}.
Then to prove the time-marginals condition reduces to prove it only on a dense set of times $t$.
\begin{proposition}\label{prop:right_continuity}
The evaluation map $e_t\colon D(I:X)\to X$ defined as $e_t(\gamma) \coloneqq \gamma_t$ is Borel measurable. Moreover, for any $\pi\in P(D(I:X))$ the function
\begin{align}
    t\mapsto \int \phi\circ e_t\d\pi
\end{align}
is Càdlàg for every continuous and bounded $\phi\in C_b(X:\R)$.
\end{proposition}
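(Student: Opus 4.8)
The plan is to treat the two assertions separately: first the Borel measurability of the evaluation map $e_t$ for each fixed $t$, and then the Càdlàg regularity of the averaged function $F(t)\coloneqq\int\phi\circ e_t\,\d\pi$. Throughout I would rely only on the characterization of Skorokhod convergence recalled above and on the pointwise Càdlàg property of individual curves, so as to avoid any circular dependence on \cref{lma:Boreleval}, whose proof invokes the present statement.

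For the measurability of $e_t$ I would distinguish the endpoints from the interior. At $t\in\{0,1\}$ every admissible reparametrization $\lambda$ fixes $0$ and $1$, so Skorokhod convergence $\gamma_n\to\gamma$ forces $d(\gamma_n(t),\gamma(t))\to 0$ directly; hence $e_0$ and $e_1$ are in fact continuous. For $t\in(0,1)$ I would introduce, for each $g\in C_b(X:\R)$ and small $\varepsilon>0$, the functional
\[
    \psi_\varepsilon(\gamma)\coloneqq \frac1\varepsilon\int_t^{t+\varepsilon} g(\gamma(s))\,\d s.
\]
The key claim is that $\psi_\varepsilon$ is continuous on $(D(I:X),d_{Sk})$: if $\gamma_n\to\gamma$ with reparametrizations $\lambda_n\to\id$ uniformly and $d_{\sup}(\gamma,\gamma_n\circ\lambda_n)\to 0$, then at every continuity point $s$ of $\gamma$ one has $\gamma_n(s)\to\gamma(s)$, since $d(\gamma_n(s),\gamma(\lambda_n^{-1}(s)))\le d_{\sup}(\gamma,\gamma_n\circ\lambda_n)\to 0$ and $\gamma(\lambda_n^{-1}(s))\to\gamma(s)$. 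As the discontinuity set of $\gamma$ is at most countable, $g\circ\gamma_n\to g\circ\gamma$ holds $\L^1$-almost everywhere on $[t,t+\varepsilon]$, and dominated convergence gives $\psi_\varepsilon(\gamma_n)\to\psi_\varepsilon(\gamma)$. Letting $\varepsilon\searrow 0$ and using right-continuity of $s\mapsto g(\gamma(s))$ at $t$, I obtain $\psi_\varepsilon(\gamma)\to g(\gamma(t))$, so $g\circ e_t$ is a pointwise limit of continuous functions and hence Borel. Since $X$ is separable, its Borel $\sigma$-algebra is generated by $C_b(X:\R)$, and therefore $e_t$ itself is Borel.

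For the Càdlàg property of $F$, the first part guarantees that $\phi\circ e_t$ is measurable, and it is bounded by $\lVert\phi\rVert_\infty$ while $\pi$ is a probability measure, so everything reduces to dominated convergence. For right-continuity at $t\in[0,1)$: along any sequence $s_n\searrow t$, right-continuity of each $\gamma$ gives $\phi(\gamma(s_n))\to\phi(\gamma(t))$, whence $F(s_n)\to F(t)$. For the existence of left limits at $t\in(0,1]$: along any $s_n\nearrow t$, the left limit $\gamma(t^-)=\lim_{s\nearrow t}\gamma(s)$ exists for each $\gamma$, and $\gamma\mapsto\phi(\gamma(t^-))$ is measurable as a pointwise limit of the measurable maps $\phi\circ e_{s_n}$; dominated convergence then yields $F(s_n)\to\int\phi(\gamma(t^-))\,\d\pi$ independently of the chosen sequence, so the left limit exists. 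This establishes that $F$ is Càdlàg.

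The main obstacle I anticipate is the continuity of the averaging functional $\psi_\varepsilon$, more precisely the $\L^1$-a.e.\ pointwise convergence $\gamma_n(s)\to\gamma(s)$ extracted from Skorokhod convergence: one must pass carefully through the reparametrizations $\lambda_n$ and exploit that a Càdlàg curve has only countably many discontinuities, so that the exceptional set is $\L^1$-negligible. Once this is in place, both the measurability and the Càdlàg statements follow from routine applications of the dominated convergence theorem.
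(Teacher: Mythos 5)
Your proof is correct and follows essentially the same route as the paper: the measurability of $e_t$ is obtained exactly as in the paper's Step 1, via the averaging functionals $\psi_\varepsilon$, their continuity through $\L^1$-a.e.\ pointwise convergence under Skorokhod convergence, and the right-continuity of $s\mapsto g(\gamma(s))$. The only difference is in the second half, where you replace the paper's explicit $\varepsilon$-decomposition of $D(I:X)$ into the sets $\Gamma_n$ by a direct appeal to dominated convergence along sequences $s_n\searrow t$ (resp.\ $s_n\nearrow t$), together with the identification of the left limit as $\int\phi(\gamma(t^-))\,\d\pi$; this is a legitimate and slightly cleaner packaging of the same bounded-convergence idea.
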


\begin{proof}
{\bf Step 1.} For $t\in\{0,1\}$, the map $e_t$ is actually continuous. Therefore it suffices to consider $t\in(0,1)$. Moreover, it suffices to prove that $\psi\circ e_t$ is Borel measurable for every continuous and bounded $\psi\in C_b(X)$. The rest of the proof goes in the lines of the real valued case, cf. \cite{Billingsley}; for $m\in \N$, define $\psi_m\colon D(I:X)\to \R$
\begin{align}
    \psi_m(\gamma)\coloneqq \frac1m\int_t^{t+\frac1m}\psi(\gamma(s))\d s.
\end{align}
We want to show that $\psi_m$ is continuous. Let $\gamma_n \to \gamma$ in $D(I:X)$. Then $\gamma_n(s)\to \gamma(s)$ $\mathcal{L}^1$-almost everywhere. Thus, by dominated convergence we have that 
\begin{align}
    \psi_m(\gamma_n)=\frac1m\int_t^{t+\frac1m}\psi(\gamma_n(s))\d s\to\frac1m\int_t^{t+\frac1m}\psi(\gamma(s))\d s=\psi_m(\gamma),
\end{align}
when $n\to \infty$. We conclude that $\psi_m$ is continuous. On the other hand, by right continuity of $\psi\circ \gamma$, we have that $\psi\circ e_t(\gamma)=\lim_{m\to\infty}\psi_m(\gamma)$. Hence, $\psi\circ e_t$ is Borel measurable.

{\bf Step 2.} Let us first prove the right continuity for all $t\in [0,1)$. For that, let $\phi\in C_b(X)$, and $C>0$ so that $|\phi|\le C$. Fix $t_0\in[0,1)$ and $\varepsilon>0$. Write 
\begin{align}
   \Gamma\coloneqq D(I:X)= \bigcup_{n\in\N}\Gamma_n,
\end{align}
where
\begin{align}\label{eq:Gamman}
    \Gamma_n\coloneqq \left\{\gamma\in D(I:X): \lvert\phi(\gamma(t_0))-\phi(\gamma(t))\rvert<\varepsilon, \forall t \in [t_0, t_0+\frac1n] \right\}.
\end{align}
This is possible since $\phi\circ \gamma$ is right continuous for every $\gamma\in D(I:X)$. Since $\Gamma_n$ is increasing in $n$, there exists $n_0\in\N$ so that 
\[\pi(\Gamma\setminus \Gamma_{n_0})\le \frac{\varepsilon}{C}.\]
Let $s\in[t_0,t_0+\frac1{n_0}]$. Then
\begin{align}\label{eq:rightcontinuity}
    \Big\lvert\int \phi\circ e_{t_0}-\phi\circ e_s\d\pi\Big\rvert&\le\Big\lvert\int_{\Gamma\setminus\Gamma_{n_0}} \phi\circ e_{t_0}-\phi\circ e_s\d\pi\Big\rvert+\Big\lvert\int_{\Gamma_{n_0}} \phi\circ e_{t_0}-\phi\circ e_s\d\pi\Big\rvert
    \\ &\le\int_{\Gamma\setminus\Gamma_{n_0}}\big\lvert \phi\circ e_{t_0}-\phi\circ e_s\big\rvert\d\pi+\int_{\Gamma_{n_0}}\big\lvert \phi\circ e_{t_0}-\phi\circ e_s\big\rvert\d\pi
    \\ &\le 2C\pi(\Gamma\setminus \Gamma_{n_0})+\varepsilon\le 3\varepsilon. 
\end{align}
Thus, the map $t\mapsto \int\phi\circ e_t\d\pi$ is right continuous.

The existence of left limits goes in the same lines. The only modifications needed are the following. First of all, in \eqref{eq:Gamman} the set $\Gamma_n$ is replaced by
\begin{align}
    \tilde\Gamma_n\coloneqq \left\{\gamma\in D(I:X): \lvert\phi(\gamma(t_0^-))-\phi(\gamma(t))\rvert<\varepsilon, \forall t\in[t_0-\frac1n,t_0]\right\}.
\end{align}
Second, in \eqref{eq:rightcontinuity}, the estimates are done for $t,s\in[t_0-\frac1{n_0},t_0)$ from which the existence of left limits follows by Cauchy's criterion. 
\end{proof}

Finally, the observation below will be useful when dealing with (non-continuous) BV-curves in the Wasserstein space.

\begin{lemma}\label{lma:precompactness}
Let $\gamma\in D(I:X)$. Then the image of $\gamma$ is precompact.
\end{lemma}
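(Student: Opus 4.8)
The plan is to prove that the image $\gamma(I)$ is totally bounded; since $(X,d)$ is complete, total boundedness is equivalent to relative compactness, so this is exactly what we need. Fix $\varepsilon>0$; the goal is to cover $\gamma(I)$ by finitely many balls of radius $\varepsilon$, where we write $B(x,\varepsilon)\coloneqq\{y\in X:d(x,y)<\varepsilon\}$.

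The key local observation exploits the Càdlàg structure. For each $t\in[0,1)$, right-continuity of $\gamma$ at $t$ yields a radius $\delta_t>0$ with $d(\gamma(s),\gamma(t))<\varepsilon$ for all $s\in[t,t+\delta_t)$. For each $t\in(0,1]$, the existence of the left limit $\gamma(t^-)=\lim_{s\nearrow t}\gamma(s)$ yields, after possibly shrinking $\delta_t$, that $d(\gamma(s),\gamma(t^-))<\varepsilon$ for all $s\in(t-\delta_t,t)$. Consequently, on the relatively open interval $J_t\coloneqq(t-\delta_t,t+\delta_t)\cap I$ the image $\gamma(J_t)$ is contained in $B(\gamma(t),\varepsilon)\cup B(\gamma(t^-),\varepsilon)$ for $t\in(0,1]$, while $\gamma(J_0)\subset B(\gamma(0),\varepsilon)$, there being no left limit at $0$. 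Note that $\gamma(t)$ itself lies in $B(\gamma(t),\varepsilon)$, so every value is accounted for.

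Next I would invoke compactness of $I=[0,1]$: the family $\{J_t\}_{t\in I}$ is an open cover of the compact interval, hence admits a finite subcover $J_{t_1},\dots,J_{t_N}$. Then
\[
\gamma(I)=\bigcup_{j=1}^N\gamma(J_{t_j})\subset\bigcup_{j=1}^N\big(B(\gamma(t_j),\varepsilon)\cup B(\gamma(t_j^-),\varepsilon)\big),
\]
so $\gamma(I)$ is covered by at most $2N$ balls of radius $\varepsilon$. As $\varepsilon>0$ was arbitrary, $\gamma(I)$ is totally bounded, and therefore precompact by completeness of $X$.

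I expect the only delicate point to be the bookkeeping of the one-sided statements and the endpoints: the left-limit ball $B(\gamma(t^-),\varepsilon)$ must be included precisely because $\gamma$ need not be continuous, so near a jump time the image is captured by two centers rather than one, and at $t=0$ this ball is simply dropped. Beyond completeness (used only in the final step to pass from total boundedness to precompactness), no further hypotheses on $X$ are required.
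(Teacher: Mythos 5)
Your proof is correct, but it takes a genuinely different route from the paper. You establish total boundedness of $\gamma(I)$ by an $\varepsilon$-covering argument: right-continuity and the existence of left limits give, around each $t$, a relatively open interval $J_t$ whose image sits in at most two balls of radius $\varepsilon$ centred at $\gamma(t)$ and $\gamma(t^-)$; compactness of $[0,1]$ then yields a finite subcover and hence a finite $\varepsilon$-net, and completeness of $X$ upgrades total boundedness to precompactness. The paper instead argues sequentially: it lets $C$ be the set of all one-sided limits of $\gamma$, observes that $C$ is the closure of the image, and shows every sequence in $C$ has a convergent subsequence by approximating its points by values $\gamma(t_i)$, extracting a monotone subsequence of the times $t_i$, and using the one-sided limits again. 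Your covering argument is the classical one (it is essentially the proof in Billingsley for real-valued Càdlàg paths) and has the advantage of producing an explicit finite $\varepsilon$-net consisting of points $\gamma(t_j)$ and $\gamma(t_j^-)$; its one extra ingredient is the completeness of $X$ in the last step, which the paper's sequential argument does not invoke there (though completeness is a standing assumption throughout, so nothing is lost). Your handling of the endpoints and of the two centres per interval near a jump is exactly the right bookkeeping, and I see no gap.
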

\begin{proof}
Let $C\subset X$ be a collection of all left and right limits of $\gamma$, and let $S\coloneqq C\setminus \mathrm{Im}(\gamma)$. Clearly $C$ is the closure of the image of $\gamma$. It suffices to prove that for any sequence $(x_i)\subset S$ there exists a subsequence that converges. Indeed, if $(x_i)\subset C$ so that there exists infinitely many $i$ for which $x_i=\gamma(t_i)$, then we can take a monotone subsequence of $t_i$ and by the existence of left and right limits conclude that this subsequence converges to a point in $C$.

Therefore, let $(x_i)\subset S$. For any $i$, choose $t_i$ so that $d(\gamma(t_i),x_i)<\frac1{2^i}$. Then, as before, there exists a monotone subsequence of $(t_i)$, still denoted by $t_i$. Let $a=\lim\gamma(t_i)\in C$. Then (for the corresponding subsequence of $(x_i)$) we have that
\begin{align}d(a,x_i)\le d(a,\gamma(t_i))+\frac1{2^i}\to 0,
\end{align}
when $i\to\infty$.
\end{proof}

\subsection{Equivalent definitions of BV-curves}\label{sec:EquiBV}
It is known, especially in the real-valued case (see, e.g., \cite{Leoni}), that different definitions of BV-curves are equivalent.
Although we expect that such a result is also known in the general setting of metric spaces, we did not easily find it in the literature.
Thus, for the sake of completeness, we here prove the equivalence in the general case. 

\begin{theorem}[Equivalent definitions of BV-curves]\label{thm:DefBVcurves}
  Let $(X,d)$ be a complete and separable metric space. Given $u \in L^1 (I : X)$, the following are equivalent:
  \begin{enumerate}
    \item\label{item:EqBV1} $u$ is a BV-curve;
    \item\label{item:EqBV2} the difference quotient $\Delta_h u(t)\coloneqq d(u(t+h),u(t))/h$ for $h\in (0,1)$ satisfies
    \begin{equation}\label{ineq:Lisini}
      \sup_{0 < h < 1} \int_0^{1 - h}\Delta_h u (t)\d t <\infty;
    \end{equation}
    \item\label{item:EqBV3} there is a finite Borel measure $\mu\in\mathcal{M}([0,1])$ such that for any $\varphi\in \Lip_1(X:\mathbb{R})$, $\varphi\circ u$ is a BV-function and
    \begin{align}\label{eq:Ambrosio}
    |D(\varphi\circ u) |\leq\mu ; 
    \end{align}
    \item\label{item:EqBV4} there is a finite Borel measure $\mu\in\mathcal{M}([0,1])$ such that for any $\varphi=d(\cdot,x)$ with $x\in X$, $\varphi\circ u$ is a BV-function and \eqref{eq:Ambrosio} holds.
    \item\label{item:EqBv5} there exists a finite Borel measure $\mu \in \mathcal{M} ([0, 1])$ such that for $\mathcal{L}^2$-a.e. $(s, t) \in I^2$
    \begin{equation}\label{ineq:DisLeqMeasure}
               d(u(t),u(s)) \leq \mu ([s, t]). 
               \end{equation}
 \end{enumerate}
\end{theorem}

\begin{proof}
  $\eqref{item:EqBV1} \Rightarrow \eqref{item:EqBV2}$. If $\essvar(u)<\infty$, since $\left(
  \ref{ineq:Lisini} \right)$ is stable under choosing different
  representatives, we can assume $u \in D (I : X)$ and $\var
  (u) = \essvar (u)$. Then for any $h > 0$, the function $h \mapsto
  \Delta_h u (t)$ is bounded and the set of discontinuity points is $\mathcal{L}^1$-negligible, making it Riemann integrable by Riemann-Lebesgue theorem.
  For any $n \in \mathbb{N}$, we divide $I$ into $0 = t_0 < \cdots t_i < \cdots <
  t_N \leq 1 - h$, where $N = \lfloor (1 - h) n / h \rfloor$ and $t_i = i h /
  n$. Using triangle inequality,
  \begin{equation}
    h \Delta_h u (t_i) = d (u (t_i), u (t_i + h)) \leq \sum_{j = 0}^{n - 1} d
    (u (t_{i + j}), u (t_{i + j + 1})), \quad 1 \leq i \leq N - 1.
    \label{ineq:tria-Riesum}
  \end{equation}
  So for any $h > 0$, with convention $t_{N + 1} \coloneqq 1 - h$, we have
  \begin{eqnarray*}
    \int_0^{1 - h} \Delta_h u (t) \d t & = & \lim_{n \rightarrow \infty}
    \sum_{i = 0}^N \Delta_h u (t_i) (t_{i + 1} - t_i)\\
    & = & \lim_{n \rightarrow \infty} \left( \Delta_h u (t_N) (1 - h - t_N) +
    \frac{h}{n} \sum_{i = 0}^{N - 1} \Delta_h u (t_i) \right)\\
    & \leq & \liminf_{n \rightarrow \infty} \frac{1}{n} \left( \sum_{i =
    0}^{N - 1} \sum_{j = 0}^{n - 1} d (u (t_{i + j}), u (t_{i + j + 1})) + d
    (u (t_N), u (1 - h)) \right)\\
    & \leq & \sum_{i = 0}^N d (u (t_i), u (t_{i + 1})) \leq \var (u) .
  \end{eqnarray*}
  $\eqref{item:EqBV2}\Rightarrow \eqref{item:EqBV3}$. Let $L$ be the following non-decreasing function
  \begin{equation}\label{eq:LSmeasureV}
  L (t) = \sup_{0 < h < t} \int_0^{t - h} \Delta_h u (s) \d s 
  \end{equation}
  and $| D L |$ be the Lebesgue-Stieltjes measure such that $| D L | ((a, b]) = L
  (b^+) - L (a^+) .$ By assumption $| D L |$ is a finite measure and \cref{lemma:TmeasureL} below ensures
  \[ 
  \sup_{0< h<b-a}\int_a^{b-h} \Delta_h u(t)\d t\leq|D L |([a,b]).
  \]
Given $\varphi \in \Lip_1(X:\mathbb{R})$,
  \begin{equation}
    \sup_{0 < h < b-a} \int_a^{b - h} \Delta_h (\varphi \circ u) (s) \d s \leq
    \sup_{0 < h < b - a} \int_a^{b - h} \Delta_h u (t) \d t \leq | D L|([a,b]) . \label{ineq:SobolevTV}
  \end{equation}
  Using \cite[Corollary 2.43]{Leoni}, $\varphi \circ u$ is a BV-function on $I$ and
  over any interval $(a, b)$
  \begin{equation}\label{eq:W11toVarM,Realfct}
  \essvar(\varphi\circ u;(a, b))=\sup_{0<h< b-a}\int_a^{b-h} \Delta_h (\varphi \circ u) (s) \d s.
  \end{equation}
  Thus, with \eqref{ineq:SobolevTV},
  \begin{eqnarray*}
    | D (\varphi \circ u) | ((a, b)) & = & \lim_{r\searrow0} | D (\varphi\circ u) | ((a+r,b-r))\\
    & \leq &\liminf_{r\searrow0} | D L | ([a+r,b-r])\\
    & \leq & | D L | ((a, b))
  \end{eqnarray*}
  for arbitrary $\varphi \in \Lip_1 (X : \mathbb{R})$ and $(a, b)
  \subset I$. $\eqref{item:EqBV3}\Rightarrow \eqref{item:EqBV4}$ is trivial.\\
  $\eqref{item:EqBV4} \Rightarrow \eqref{item:EqBV1}$ Let $\{z_i\}_{i \in \mathbb{N}} \subset X$ be a dense set.
  Define the functions $\varphi_i (\cdot) \coloneqq d(z_i, \cdot)$.
  By assumption $\varphi_i\circ u \in BV(I:\mathbb{R})$.
  Thus for each $i$, we can take a representative $\phi_i \in D(I:\mathbb{R})$ and $N_i\subset I$ such that $\mathcal{L}^1(N_i) = 0$ and $\phi_i = \varphi_i\circ u$ on $I \setminus N_i$.
  Set $\tilde{I} = I \setminus \cup_{i \in \mathbb{N}} N_i$. 
  Our goal is to find a representative $\tilde{u}$ whose pointwise variation is finite. 
  
  Notice that by the density of the set $\{z_i\}$ in $X$, we have
  \begin{align}
      d(u(s),u(t))
      & = \sup_i \big| d(z_i,u(s)) - d(z_i,u(t)) \big|.
  \end{align}
    Further using the assumption on measure $\mu$, for $s,t \in \tilde{I} $ and $s<t$, we have
    \begin{align}
      d(u(s),u(t))  = \sup_i \big| \varphi_i(s) - \varphi_i(t) \big| \leq \sup_i |D \varphi_i|((s,t]) \leq \mu((s,t]). 
  \end{align}
  Now take a decreasing sequence $t_i \rightarrow t$ with $t_i \in \tilde{I}$
  \begin{align}
      \sum_{i=k}^{\infty} d (u(t_i),u(t_{i+1}))
      & \leq \sum_{i=k}^{\infty} \mu( (t_{i+1},t_{i}]) \leq \mu( (t,t_{k}]).
  \end{align}
  Since $\mu$ is a finite measure, the right hand side in equation above goes to 0 as $k \rightarrow \infty $. Therefore, $(u(t_i))_{i \in \mathbb{N}}$ is a Cauchy sequence and by completeness, the limit 
  \begin{equation}
     \tilde{u} (t) \coloneqq \lim_{i \rightarrow \infty} u(t_i)
  \end{equation}
  exists and we have $\tilde{u}= u$ over $\tilde{I}$. Finally, by the similar argument as in the proof of Lemma \ref{lemma:normalizedBV}, equation \eqref{eq:finitevar}, we conclude
  \begin{equation}
      \var (\tilde{u}) \leq \mu (I) < \infty.
  \end{equation}
$\eqref{item:EqBv5} \Rightarrow \eqref{item:EqBV3}$. Let $\varphi:X\rightarrow \R$ be any $1$-Lipschitz function. As assumed, 
\begin{equation}
|\varphi_u(t)-\varphi_u(s)|\leq \mu([s,t])\quad \varphi_u\coloneqq \varphi\circ u
\end{equation}
for $\L^2$-a.e. $(s,t)\in I^2$. Then there is $H\subset I$ of full $\mathcal{L}^1$ measure such that for any $h\in H$, $ |\varphi_u(t)-\varphi_u(t+h)|\leq\mu([t,t+h])$ over $\mathcal{L}^1$-a.e. $t\in(0,1-h)$.
Since the monotone function $t\mapsto \mu ([0, t])$ is discontinuous at most at countably many points, $\mu([0, t])=\mu ([0, t))$ for $\mathcal{L}^1$-a.e. $t \in (0, 1)$. Now for every $0\leq a<b\leq1$ and $h\in H$,
  \begin{align}
    \int_a^{b- h} \Delta_h \varphi_u (t) \d t & \leq  \int_a^{b-h} \frac{\mu ([t, t
    + h])}{h} \d t\\
    & =  \frac{1}{h} \int_a^{b - h} \mu ([0, t + h]) - \mu ([0, t)) \d t\\
    & =  \frac{1}{h} \left( \int_{a+h}^b \mu ([0, t]) \d t - \int_a^{b - h} \mu
    ([0, t]) \d t \right)\\
    & =  \frac{1}{h} \left( \int_{b- h}^b \mu ([0, t]) dt - \int_a^{a+h} \mu
    ([0, t]) \d t \right)\\
    &\leq\mu([0,b])-\mu([0,a))=\mu ([a, b])<\infty \label{ineq:LisiniLeqMeasure}.
  \end{align}
 Moreover, we can extend  \eqref{ineq:LisiniLeqMeasure} to those $h\notin H$ using the continuity given by \cref{lemma:TmeasureL} \eqref{item:Lemma2.10.2}.
 With again the relation \eqref{eq:W11toVarM,Realfct}, we know $\varphi_u$ is a BV-function and $|D\varphi_u|\leq\mu$.\\
  $\eqref{item:EqBV1} \Rightarrow \eqref{item:EqBv5}$. It suffices to prove the statement for $u \in
  {D} (I:X)$ satisfying $\left( \ref{eq:PBVachieved} \right)$ as in
  Lemma \ref{lemma:normalizedBV}. By choosing $\mu$ as the variation measure $|Du|$, it follows
  \begin{eqnarray*}
    d(u(s), u(t))&\leq&\liminf_{r\searrow0}\var(u;(s-r,t+r))\\
    & = & \liminf_{r\searrow0}\essvar(u;(s-r,t+r))=\mu([s,t]).
  \end{eqnarray*}
\end{proof}
\begin{lemma}\label{lemma:TmeasureL}
For any $u\in L^1(I:X)$ and $0\leq a<b\leq 1$, define $L$ as in \eqref{eq:LSmeasureV} and 
\begin{equation}
l^b_a(h)\coloneqq \int_a^{b-h} \Delta_h u(t)\d t,\quad 0<h<b-a.
\end{equation}
Then 
\begin{enumerate}
\item\label{item:Lemma2.10.1} for all $0<h<b-a$, $l^b_a(h)\leq l^b_a(h/2)$, and in particular, 
\[
\sup_{0<h<b-a}l^b_a(h)=\limsup_{h\rightarrow 0}l^b_a(h);
\]
\item\label{item:Lemma2.10.2} if $X=\R$, then $(0,b-a)\ni h\mapsto l^b_a(h)$ is continuous;
\item\label{item:Lemma2.10.3} for general metric space $X$, if $L(1)<\infty$, $(0,b-a)\ni h\mapsto l^b_a(h)$ is continuous and
\begin{equation}\label{ineq:W1,1toVarM}
  \sup_{0< h<b-a}\int_a^{b-h} \Delta_h u(t)\d t \leq | D L |([a,b]).
\end{equation}
\end{enumerate}
\end{lemma}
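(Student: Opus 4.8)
The plan is to dispatch the three items in order, using only the triangle inequality, elementary rearrangements of the difference quotient, and, for item \eqref{item:Lemma2.10.3}, the $L^1$-translation bound encoded in the hypothesis. For item \eqref{item:Lemma2.10.1} I would start from the midpoint estimate
\[
\Delta_h u(t)\le \tfrac12\Delta_{h/2}u(t)+\tfrac12\Delta_{h/2}u\big(t+\tfrac h2\big),
\]
which is immediate from $d(u(t+h),u(t))\le d(u(t+h),u(t+\tfrac h2))+d(u(t+\tfrac h2),u(t))$. Integrating over $(a,b-h)$, enlarging the domain to $(a,b-\tfrac h2)$ in the first term and substituting $s=t+\tfrac h2$ in the second, both contributions are at most $\tfrac12 l^b_a(h/2)$, so $l^b_a(h)\le l^b_a(h/2)$. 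Iterating along the dyadic scale makes $k\mapsto l^b_a(h/2^k)$ non-decreasing, whence $\sup_{0<h<b-a}l^b_a(h)=\limsup_{h\to0}l^b_a(h)$: the bound $\ge$ is trivial, and $\le$ follows since each $l^b_a(h_0)\le\lim_k l^b_a(h_0/2^k)\le\limsup_{h\to0}l^b_a(h)$. At this stage I would also record the same chaining with $m$ equal subintervals, giving the \emph{refinement monotonicity} $l^b_a(mh)\le l^b_a(h)$ for every integer $m$ with $mh<b-a$, to be used later.

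For item \eqref{item:Lemma2.10.2} I would write $l^b_a(h)=F(h)/h$ with $F(h)=\int_a^{b-h}\lvert u(t+h)-u(t)\rvert\,dt$ and prove continuity of $F$ by a density argument: approximate $u$ in $L^1$ by a continuous (or piecewise constant) $v$, for which $F_v$ is continuous by dominated convergence, while the quadrilateral inequality yields the \emph{uniform} bound $\lVert F-F_v\rVert_\infty\le 2\lVert u-v\rVert_{L^1}$. Being a uniform limit of continuous functions, $F$ is continuous, and hence so is $l^b_a$ on $(0,b-a)$.

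For item \eqref{item:Lemma2.10.3} the hypothesis $L(1)<\infty$ supplies the translation bound $\int_0^{1-k}d(u(s),u(s+k))\,ds\le kL(1)$ for all $k\in(0,1)$, which substitutes for the density argument (unavailable for non-separable $X$). Continuity then follows by splitting, for $h'<h$, the difference $F(h')-F(h)$ into a boundary term $\int_{b-h}^{b-h'}d(u(t+h'),u(t))\,dt$ that vanishes as $h-h'\to0$ by absolute continuity of the integral, and a term bounded, via the quadrilateral inequality and the translation bound with $k=h-h'$, by $(h-h')L(1)$. For the inequality I would combine the domain estimate $l^b_a(h)\le l^b_0(h)-l^a_0(h)$ — valid for $h\le a$ because $(a,b-h)\subset(a-h,b-h)$ — with the limit statement $\lim_{h\to0}l^{\beta}_{\alpha}(h)=\sup_{0<h<\beta-\alpha}l^{\beta}_{\alpha}(h)$ applied to $l^b_a$, $l^b_0$ and $l^a_0$. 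Passing to the limit and using $\sup_h l^b_0(h)=L(b)$, $\sup_h l^a_0(h)=L(a)$ gives $\sup_h l^b_a(h)=\lim_{h\to0}l^b_a(h)\le L(b)-L(a)\le |DL|([a,b])$, the last step using $L(a)\ge L(a^-)$, $L(b)\le L(b^+)$ and $|DL|([a,b])=L(b^+)-L(a^-)$.

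The delicate point, and the one I expect to be the main obstacle, is upgrading the equality $\sup=\limsup$ of item \eqref{item:Lemma2.10.1} to an honest limit $\lim_{h\to0}l^b_a(h)=\sup_h l^b_a(h)$; only an honest limit lets one evaluate $l^b_0$ and $l^a_0$ at their respective suprema \emph{simultaneously} (a continuous function with $\sup=\limsup$ and only dyadic monotonicity need not have a limit at $0$). Here I would exploit the continuity just proved together with the refinement monotonicity: given $h_n\to0$ with $l^b_a(h_n)\to\liminf_{h\to0}l^b_a(h)$, the integer multiples $\{kh_n\}$ form an $h_n$-net of $(0,b-a)$ on which $l^b_a\le l^b_a(h_n)$, so uniform continuity on each compact subinterval forces $l^b_a(t)\le\liminf_{h\to0}l^b_a(h)$ for all $t$; hence the liminf already equals the supremum and the limit exists.
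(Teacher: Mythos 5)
Your proof is correct, and for items \eqref{item:Lemma2.10.1}, \eqref{item:Lemma2.10.2} and the continuity claim in \eqref{item:Lemma2.10.3} it coincides with the paper's argument up to packaging: the paper establishes continuity via the single reduction $\int \lvert d(u(t),u(t+h))-d(u(t),u(t+h'))\rvert\,\mathrm{d}t\le\int d(u(t+h),u(t+h'))\,\mathrm{d}t$ followed by $L^1$-continuity of translations (by density of continuous functions when $X=\R$, and by the bound $\int_0^{1-k}d(u(s),u(s+k))\,\mathrm{d}s\le kL(1)$ in general), which is exactly your $F$ versus $F_v$ and boundary-term estimates in different clothing. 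Where you genuinely diverge is the inequality \eqref{ineq:W1,1toVarM}. The paper proves the \emph{simultaneous} bound $l^b_a(h_1)+l^a_0(h_2)\le L(b)$ for all $h_1,h_2$, using continuity only to reduce to dyadic $h_i=k_i/2^n$ with a common denominator and then chaining both integrals with the uniform step $1/2^n$; taking independent suprema yields $\sup_h l^b_a(h)\le L(b)-L(a)$ without ever needing $\lim_{h\to 0}l^a_0(h)$ to exist. You instead use the pointwise domain-splitting $l^b_a(h)\le l^b_0(h)-l^a_0(h)$ and pass to the limit, which, as you correctly flag, requires upgrading $\sup=\limsup$ to an honest limit; your net argument (refinement monotonicity $l^b_a(mh)\le l^b_a(h)$ combined with local uniform continuity) accomplishes this and is sound. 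Each route buys something: yours establishes the stronger and independently useful fact that $l^b_a(h)$ actually converges as $h\to 0$ to its supremum, while the paper's double chaining sidesteps the limit-upgrade entirely at the cost of a slightly more intricate combinatorial estimate.
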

\begin{proof}
The first assertion follows simply by triangle inequality:
\begin{align}
 \int^{b-h}_{a}\frac{d(u(t),u(t+h))}{h}\d t&\leq \int^{b-h}_{a}\frac{d(u(t),u(t+h/2))+d(u(t+h/2),u(t+h))}{h}\d t\\
 &\leq \int^{b-h/2}_{a}\frac{d(u(t),u(t+h/2))}{h/2}\d t.
\end{align}
Given arbitrary $h,h'$, by triangle inequality
\[
\int |d(u(t),u(t+h))-d(u(t),u(t+h'))|\d t \leq \int d(u(t+h),u(t+h'))\d t.
\]
So the continuity of $l^b_a$ at $h$ boils down to show
\begin{equation}\label{eq:translationConti}
    \lim_{h\rightarrow 0}\int d(u(t),u(t+h))\d t=0.
\end{equation}
If $X=\R$, \eqref{eq:translationConti} results from the fact that any $L^1$-function can be approximated by continuous functions under $L^1$-norm. On general metric spaces, \eqref{eq:translationConti} immediately follows from the finiteness of $L(1)$.\\ 
As for \eqref{ineq:W1,1toVarM}, by definition, $|DL|([a,b])=L(b^+)-L(a^-)$, so it suffices to show $l^b_a(h_1)+l^a_0(h_2)\leq L(b)$ for all $h_1,h_2$. By the continuity of $h\mapsto l^b_a(h)$, we can assume
\[
h_i=\frac{k_i}{2^n},\quad n,k_i\in\N, 1\leq k_i<2^n,i=1,2.
\]
The conclusion follows if we take the uniform step-size $1/2^n$ and argue as in part $(1)$:
\begin{align}
   l^b_a(h_1)+l^a_0(h_2)&= \int^{b-h_1}_{a}\frac{d(u(t),u(t+h_1))}{h_1}\d t+ \int^{a-h_2}_{0}\frac{d(u(t),u(t+h_2))}{h_2}\d t\\
   &\leq\int_a^{b-h_1}\sum_{k=1}^{k_1}d\Big(u\big(t+\frac{k-1}{2^n}\big),u\big(t+\frac{k}{2^n}\big)\Big)\frac{2^n}{k_1}\d t\\
   &\quad+ \int_0^{a-h_2}\sum_{k=1}^{k_2}d\Big(u\big(t+\frac{k-1}{2^n}\big),u\big(t+\frac{k}{2^n}\big)\Big)\frac{2^n}{k_2}\d t\\
   &\leq \int_0^{b-1/2^n}\frac{d\big(u(t),u(t+1/2^n)\big)}{1/2^n}\d t\leq L(b).
\end{align}
\end{proof}
\begin{rem}[Equivalence of variation measures]\label{rem:equalvariation}
    In \cref{thm:DefBVcurves}, we show that five different definitions of the \emph{set} of BV-curves are in fact equivalent. 
    The proof has an even stronger implication, namely, that the five(-ish) different notions of the variation measure of a BV-curve are equal. 
    To be more precise, given a BV-curve $u$, the following measures are equal:
    \begin{itemize}
        \item[(\hspace{0.5pt}0\hspace{2.1pt})] the Lebesgue--Stieltjes  measure induced by $\essvar(u;(0,\cdot))$;
        \item[(\hspace{0.5pt}1\hspace{2.1pt})] the measure $\mu$ characterized by $\mu((a,b))=\essvar(u;(a,b))$;
        \item[(\hspace{0.7pt}2\hspace{1.8pt})] the measure $\mu$ characterized by $\mu((a,b))=\sup_{h}\int_a^{b-h}\Delta_h u (t)\d t$;
        \item[(\hspace{0.5pt}3\hspace{2.1pt})] the measure $\mu=\underset{\varphi\in \Lip_1(X)}{\mathcal{M}-\sup}|D(\varphi\circ u)|$; \footnote{For the general construction of a supremum measure over an arbitrary collection of measures, see \cite{Ambrosio1990MetricSV}. The bound \eqref{eq:Ambrosio} ensures that the construction yields a finite measure, and indeed it is the minimal such a bound $\mu$. } 
        \item[(3$'$)] the minimal measure satisfying \eqref{eq:Ambrosio};
       
        \item[(\hspace{0.5pt}4\hspace{2.1pt})]the minimal measure satisfying \eqref{eq:Ambrosio} for all $\varphi$ of the form $\varphi=d(\cdot,x)$;
        \item[(\hspace{0.5pt}5\hspace{2.1pt})] the minimal measure satisfying \eqref{ineq:DisLeqMeasure}.
    \end{itemize}
    
    The fact that all these different approaches lead to the same measure is evident from the corresponding steps in the proof of \cref{thm:DefBVcurves}.
    The single measure obtained in one of the various ways is thus denoted by $|Du|$ and called the variation measure of $u$.
    Moreover, we take advantage of the different approaches without mentioning it explicitly.
\end{rem}
\begin{rem}\label{rem:NEWEquiV1&4}
Sometimes it is useful to have bounds as above which hold not only for almost every $s$ and $t$, but in fact everywhere.
For this one needs to consider a specific choice for a representative of the BV-curve in question.
One choice, which we decided to work with in this paper, is the Càdlàg-representative.
Since $D(I:X)$-representative of a BV-curve is unique (up to its value at $t=1$),  if $u\in \mathcal{BV}(I:X)$, by Lemma \ref{lemma:normalizedBV}, we indeed have that $d(u(s),u(t))\leq|Du|((s,t])$ for \emph{all} $0\leq s\leq t<1$.
\end{rem}

\section{Main results}\label{sec:results}
\subsection{Lifts of AC- and BV-curves in 1-Wasserstein spaces} As introduced in \cref{sec:introduction,subsec:notation}, we consider  $(X,d)$  a complete and separable metric space and $P_1(X)$ the associated Wasserstein space of order $p=1$. Without loss of generality, we fix the  time interval to be $I=[0,1]$.   
\begin{theorem}\label{thm:BVSk}
    Let $\pi \in P(D(I:X))$ be concentrated on $\mathcal{BV}(I:X) \subset D(I:X)$ such that
    \begin{equation}\label{eq:Dmu_inequality}
        \int | D \gamma | (I) \d \pi (\gamma) < \infty 
    \end{equation}
    and $\mu_0  \coloneqq (e_0)_\# \pi \in P_1(X)$. Then the curve $t\mapsto\mu_t \coloneqq (e_t)_\# \pi$ belongs to $\mathcal{BV}(I: P_1(X))$, and 
    \begin{equation}\label{eq:easyineq}
        |D \mu | \leq \int |D \gamma| \d\pi (\gamma)
    \end{equation}
    as measures.
\end{theorem}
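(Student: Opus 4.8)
The plan is to exploit the fact that $\pi$ itself furnishes couplings between the time-marginals. For fixed $0\le s\le t<1$, the pair map $(e_s,e_t)\colon D(I:X)\to X\times X$ is Borel (its components are Borel by \cref{prop:right_continuity}), and $(e_s,e_t)_\#\pi$ is a coupling of $\mu_s$ and $\mu_t$, so by the very definition of the $1$-Wasserstein distance
\[
W_1(\mu_s,\mu_t)\le \int d(\gamma(s),\gamma(t))\,d\pi(\gamma).
\]
Since $\pi$ is concentrated on càdlàg curves, \cref{rem:NEWEquiV1&4} gives $d(\gamma(s),\gamma(t))\le |D\gamma|((s,t])$ for every such pair, whence $W_1(\mu_s,\mu_t)\le \int |D\gamma|((s,t])\,d\pi(\gamma)$. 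Before using this I would record that $\mu_t\in P_1(X)$ for all $t$: from $d(\gamma(t),\bar x)\le d(\gamma(0),\bar x)+|D\gamma|(I)$ and integration against $\pi$, the first moment of $\mu_t$ is finite thanks to $\mu_0\in P_1(X)$ and \eqref{eq:Dmu_inequality}.

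The core step is to convert this pointwise estimate into a BV-statement through \cref{thm:DefBVcurves}. I would define the averaged variation measure $\Lambda(A)\coloneqq \int |D\gamma|(A)\,d\pi(\gamma)$ for Borel $A\subseteq I$, and check it is a finite Borel measure. Measurability of $\gamma\mapsto |D\gamma|((a,b))=\var(\gamma;(a,b))$ comes from the lower semicontinuity of $\var$ on $D(I:X)$ (\cref{lma:varlsc}); a monotone-class argument then extends measurability to all Borel sets, countable additivity of $\Lambda$ follows from monotone convergence, and finiteness is exactly \eqref{eq:Dmu_inequality}. With $\Lambda$ in hand the estimate reads $W_1(\mu_s,\mu_t)\le \Lambda((s,t])\le \Lambda([s,t])$ for all (hence $\mathcal L^2$-a.e.) $(s,t)$, so condition \eqref{item:EqBv5} of \cref{thm:DefBVcurves} yields $\mu\in BV(I:P_1(X))$. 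By \cref{rem:equalvariation} the variation measure $|D\mu|$ is the \emph{minimal} measure satisfying such a bound, so $|D\mu|\le \Lambda=\int|D\gamma|\,d\pi$, which is precisely \eqref{eq:easyineq}.

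It remains to upgrade $\mu$ from a BV-curve to a càdlàg curve \emph{in the $W_1$-metric}. Here \cref{prop:right_continuity} is not enough, since it only gives càdlàg-ness of $t\mapsto\int\phi\,d\mu_t$ for bounded continuous $\phi$, i.e. in the weak topology. Instead I would argue directly from the coupling bound: for right-continuity at $t_0$, as $s\searrow t_0$ each càdlàg $\gamma$ has $d(\gamma(s),\gamma(t_0))\to 0$ with $\pi$-integrable dominating function $|D\gamma|(I)$, so dominated convergence gives $W_1(\mu_s,\mu_{t_0})\le \int d(\gamma(s),\gamma(t_0))\,d\pi\to 0$. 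For left limits, the same estimate applied to $s<s'<t_0$ gives $W_1(\mu_s,\mu_{s'})\le \int d(\gamma(s),\gamma(s'))\,d\pi\to 0$ as $s,s'\nearrow t_0$, using $\gamma(s),\gamma(s')\to\gamma(t_0^-)$ for each $\gamma$; thus $(\mu_s)_{s<t_0}$ is Cauchy and converges in the complete space $(P_1(X),W_1)$. Hence $\mu\in\mathcal{BV}(I:P_1(X))$.

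The main obstacle is purely in the technical bookkeeping rather than the geometry, and it is twofold: first, verifying rigorously that $\Lambda$ is a genuine finite Borel measure, i.e. the measurability of the measure-valued assignment $\gamma\mapsto|D\gamma|$; and second, ensuring the càdlàg property is obtained in the Wasserstein metric rather than only weakly. For the latter, the uniform $\pi$-integrable envelope $|D\gamma|(I)$ supplied by \eqref{eq:Dmu_inequality} is exactly what drives the dominated-convergence argument, and it is also what forces $\mu_t$ into $P_1(X)$ in the first place.
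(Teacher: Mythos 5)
Your proposal is correct and follows essentially the same route as the paper's proof: bound $W_1(\mu_s,\mu_t)$ by the coupling $(e_s,e_t)_\#\pi$ and then by $\int|D\gamma|([s,t])\,d\pi$ via \cref{rem:NEWEquiV1&4}, invoke \cref{thm:DefBVcurves}\eqref{item:EqBv5} together with the minimality statement of \cref{rem:equalvariation} to get \eqref{eq:easyineq}, and obtain the Càdlàg property in $W_1$ by dominated convergence. Your explicit verification that $\Lambda=\int|D\gamma|\,d\pi$ is a finite Borel measure is handled by the paper in a separate remark, but the substance is the same.
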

\begin{rem}\label{rem:Dgammapi}
    We emphasize that the right-hand side of \eqref{eq:easyineq} is a short-hand notation for the measure which is defined by
    \begin{equation}\label{eq:integralofmeasure}
        \left( \int_{D(I:X)} |D \gamma| \d\pi(\gamma) \right) (A) \coloneqq \int_{D(I:X)} |D \gamma| (A) \d\pi(\gamma) 
    \end{equation}
    for any Borel set $A \subset I$. 
    Notice that for an open set $U$, the map $\gamma \mapsto |D \gamma|(U)$ is Borel measurable due to the lower semi-continuity of variation (\cref{lma:varlsc}). 
    Then by standard measure theory techniques, $\gamma \mapsto |D \gamma|(A)$ is also Borel measurable, hence the set function in \eqref{eq:integralofmeasure} is well-defined, finite (by \eqref{eq:Dmu_inequality}), and actually a measure. 
    Moreover, the integral of any (non-negative) Borel function $f\colon I \to \mathbb{R}$ with respect to this measure is given by $\iint f(t) \d |D \gamma | (t) \d \pi (\gamma)$. 
\end{rem}

\begin{proof}[Proof of \cref{thm:BVSk}]
By Lemma \ref{lemma:normalizedBV},  $\pi$-a.e. $\gamma$ has bounded variation and over each interval $(a,b)\subset I$, we have that $|D\gamma|((a,b))=\essvar (\gamma;(a,b))=\var(\gamma;(a,b))$.
Fix a point $\bar{x}\in X$.
Using the fact that $\mu_t = (e_t)_\# \pi$, we have for all $t\in (0,1)$ that
\begin{align}
\int_X d (\bar{x},x) \d\mu_t(x)&=\int_{D(I:X)} d(\bar{x},\gamma(t)) \d\pi(\gamma)\\
&\leq  \int_{D(I:X)} \Big[  d(\bar{x},\gamma(0))+d(\gamma(0),\gamma(t))\Big] \d\pi(\gamma)\\
&\leq W_1(\delta_{\bar{x}},\mu_0) + \int_{D(I:X)} |D\gamma|([0,1]) \d\pi(\gamma)<\infty.\label{ineq:pfBVsk1}
\end{align}
In particular, $\mu_t\in P_1(X)$ for all $t\in[0,1)$. 
At each $t\in[0,1)$, by right-continuity of curves in $D(I:X)$ and dominated convergence theorem,
 \[
  W_1(\mu_t,\mu_{t+r})\leq\int_{D(I:X)} d(\gamma(t),\gamma(t+r))\d\pi(\gamma)\rightarrow 0
\]
as $r\rightarrow 0$. 
Similarly, we can show $t\mapsto \mu_t$ has left limit in $P_1(X)$ at each $t\in(0,1]$.  In other words, $(\mu_t)\in D(I:P_1(X))$ and in particular with \eqref{ineq:pfBVsk1}, $t\mapsto \mu_t\in L^1(I:P_1(X))$. Now for arbitrary $0\leq s<t\leq1$, we can estimate
\begin{align}
W_1(\mu_t,\mu_s)&\leq \int_{D(I:X)} d(\gamma(t),\gamma(s))\d\pi(\gamma)\\
&\leq \int_{D(I:X)} |D\gamma|([s,t])\d\pi(\gamma)= \left(  \int_{D(I:X)} |D\gamma|\d\pi(\gamma)\right)([s,t]),
\end{align}
where the second inequality follows from Remark \ref{rem:NEWEquiV1&4}. Theorem \ref{thm:DefBVcurves} ensures that $(\mu_t)\in \mathcal{BV}(I:P_1(X))$, and by \cref{rem:equalvariation}, we have that 
\[
|D\mu|\leq\int_{D(I:X)} |D\gamma|\d\pi(\gamma).
\]
This concludes the proof.
\end{proof}

The previous theorem states that any lift $\pi$ of a BV-curve $(\mu_t)$ provides an upper bound for the variation measure of $(\mu_t)$ through equation \eqref{eq:easyineq}.
In the next theorem, a measure $\tilde{\pi}$ is constructed, using techniques of optimal transportation, that achieves equality and thus entails a key relation on the variation of the curves.
Our main result is the following:

\begin{theorem}[AC- and BV-curves in 1-Wasserstein spaces]\label{thm:lift_BV}
    Let $(\mu_t)\in \mathcal{BV}(I:P_1(X))$.
    Then there exists a probability measure $\tilde{\pi}\in P(D(I:X))$ such that
    \begin{enumerate}[label=(\roman*), font=\normalfont]
        \item\label{itm:support} $\tilde{\pi}$ is concentrated on $\mathcal{BV}(I:X)\subset D(I:X)$;
        \item\label{itm:marginalprop} $(e_t)_\#\tilde{\pi}=\mu_t$ for all $t\in I$;
        \item\label{itm:totalvarian} the total variation measure $|D\mu|$ satisfies\footnote{
        Two comments are provided: (1) As usual, we interpret the measure on the right-hand side of \eqref{eq:Dmu}  in accordance with \cref{rem:Dgammapi}.
        (2) The measure $\tilde{\pi}$ is constructed in a way so that an equation of the form \eqref{eq:Dmu} holds for the pointwise variation on the whole interval $I$. For more details, see Step 4 in the proof.}
       \begin{equation}\label{eq:Dmu}
           |D\mu|=\int|D\gamma|\d\tilde{\pi}(\gamma).
       \end{equation}
    \end{enumerate}
    Moreover, the absolutely continuous part $|\dot\mu|\mathcal{L}^1$ of the measure $|D\mu|$, given by the metric derivative, satisfies
    \begin{align}\label{eq:dotmu_BV}
         |\dot\mu_t|   = \lim_{h \rightarrow 0 } \int \frac{  d (\gamma_{t}, \gamma_{t+h})}{|h|} \d\tilde{\pi}(\gamma) = \lim_{h \rightarrow 0 } \int \frac{ |D \gamma|([t,t+h])}{|h|} \d\tilde{\pi}(\gamma)
    \end{align}
    for $\mathcal{L}^1$-a.e. $t\in I$. 
    \\
    In particular, if $(\mu_t) \in \mathcal{AC}^1(I:P_1(X))$, then $|D\mu|=|\dot\mu|\mathcal{L}^1$ and the metric speed $|\dot\mu_t|$ is characterized by the equation above. 
\end{theorem}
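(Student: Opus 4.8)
The plan is to adapt the approximation-and-superposition scheme of Lisini (and of Ambrosio--Gigli--Savar\'e) to the C\`adl\`ag/BV setting, building the lift $\tilde\pi$ as a weak limit of laws concentrated on piecewise-constant curves, and throughout I would use that by \cref{lma:precompactness} the image $\{\mu_t:t\in I\}$ is precompact in $P_1(X)$, hence the marginals are uniformly tight. First I would fix the dyadic partitions $t^n_i=i2^{-n}$ of $I=[0,1]$ and, for each consecutive pair, pick an optimal plan $\sigma^n_i\in\opt(\mu_{t^n_i},\mu_{t^n_{i+1}})$ realizing $W_1(\mu_{t^n_i},\mu_{t^n_{i+1}})$. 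Using the gluing lemma I would assemble the $\sigma^n_i$ into a single law on discrete trajectories $(x_0,\dots,x_{2^n})$ with the prescribed consecutive two-marginals, and push it forward under the map sending $(x_i)_i$ to the step curve $\gamma(t)=x_i$ on $[t^n_i,t^n_{i+1})$, $\gamma(1)=x_{2^n}$. This produces $\pi_n\in P(D(I:X))$ with $(e_{t^n_i})_\#\pi_n=\mu_{t^n_i}$, and since each step curve has $\var(\gamma)=\sum_i d(x_i,x_{i+1})$,
\[
  \int\var(\gamma)\,\d\pi_n(\gamma)=\sum_i W_1(\mu_{t^n_i},\mu_{t^n_{i+1}})\le\var(\mu;I)=|D\mu|(I).
\]

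Next I would show $\{\pi_n\}$ is tight in the Skorokhod space via Billingsley's criterion and extract a subsequence $\pi_n\rightharpoonup\tilde\pi$. For compact containment at each fixed time I would combine the convergence $(e_t)_\#\pi_n\to\mu_t$ in $W_1$ with the uniform tightness of $\{\mu_t\}$; right-continuity of the step curves then forces the whole path into the relevant compact set whenever its dyadic-time values lie there, circumventing a naive union bound over the $2^n$ grid points. For the modulus-of-continuity condition I would bound the Skorokhod modulus $w'(\gamma,\delta)$ in terms of $\var(\gamma)$ (large jumps are few and the oscillation between them is small) and apply Markov's inequality against the uniform bound $\int\var(\gamma)\,\d\pi_n\le|D\mu|(I)$.

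I would then verify the three properties. Item \ref{itm:support} follows from lower semicontinuity of $\gamma\mapsto\var(\gamma)$ (\cref{lma:varlsc}): the uniform bound passes through, $\int\var\,\d\tilde\pi\le\liminf_n\int\var\,\d\pi_n\le|D\mu|(I)<\infty$, so $\tilde\pi$ is concentrated on BV-curves. For \ref{itm:marginalprop} the marginals agree at every dyadic time by weak convergence, and since both $t\mapsto\int\phi\,\d\mu_t$ and $t\mapsto\int\phi\circ e_t\,\d\tilde\pi$ are C\`adl\`ag (\cref{prop:right_continuity}) and coincide on a dense set, they coincide everywhere. Item \ref{itm:totalvarian} follows by sandwiching: once \ref{itm:support} and \ref{itm:marginalprop} hold, \cref{thm:BVSk} gives $|D\mu|\le\int|D\gamma|\,\d\tilde\pi$ as measures, while the displayed total-mass bound together with $\int|D\gamma|(I)\,\d\tilde\pi=\int\var(\gamma)\,\d\tilde\pi$ (for the C\`adl\`ag representatives of \cref{lemma:normalizedBV}) gives the reverse inequality for total masses; equal total mass upgrades the measure inequality to the equality \eqref{eq:Dmu}. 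Obtaining the pointwise refinement of the footnote would require additionally tracking these representatives so that $\var(\mu;I)=\int\var(\gamma;I)\,\d\tilde\pi$.

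With \eqref{eq:Dmu} in hand, the ``Moreover'' part follows from Lebesgue--Besicovitch differentiation of $|D\mu|=\int|D\gamma|\,\d\tilde\pi$: for $\mathcal{L}^1$-a.e.\ $t$ one has $\int|D\gamma|([t,t+h])\,\d\tilde\pi/|h|=|D\mu|([t,t+h])/|h|\to|\dot\mu_t|$, and the middle term of \eqref{eq:dotmu_BV} is squeezed between this (from above, since $d(\gamma_t,\gamma_{t+h})\le|D\gamma|([t,t+h])$) and $W_1(\mu_t,\mu_{t+h})/|h|\le\int d(\gamma_t,\gamma_{t+h})\,\d\tilde\pi/|h|$ (from below, whose $\liminf$ equals $|\dot\mu_t|$). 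If $(\mu_t)\in\AC^1$, then $|D\mu|$ carries no jump or Cantor part, whence $|D\mu|=|\dot\mu|\mathcal{L}^1$ and the metric speed is characterized. I expect the \emph{Skorokhod tightness} to be the crux: unlike the uniform-convergence setting for $p>1$, the piecewise-constant lifts genuinely jump, so the modulus condition must be read off the BV bound and compact containment argued through right-continuity; the second delicate point is securing \emph{equality} rather than merely ``$\le$'' in the variation identity, which rests entirely on the sandwich between \cref{thm:BVSk} and the lower-semicontinuity estimate.
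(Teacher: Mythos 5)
Your construction of the discrete approximations $\pi_n$ (dyadic grid, consecutive optimal plans glued together, pushforward to step curves, and the bound $\int\var(\gamma)\,\d\pi_n\le\var(\mu;I)$) coincides with the paper's, as do the identification of the marginals via right-continuity, the sandwich with \cref{thm:BVSk} for the equality \eqref{eq:Dmu}, and the Lebesgue-differentiation argument for \eqref{eq:dotmu_BV}. The essential divergence, and the place where your argument has a genuine gap, is the compactness step. You propose to prove tightness of $\{\pi_n\}$ directly in the Skorokhod space via Billingsley's criterion, controlling the modulus $w'(\gamma,\delta)$ by Markov's inequality against the uniform bound $\int\var(\gamma)\,\d\pi_n\le|D\mu|(I)$. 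This does not work: a uniform variation bound gives no control of $w'$ as $\delta\to 0$. Concretely, the curves $\gamma_n=\mathds{1}_{[1/2,\,1/2+1/n)}$ all have $\var(\gamma_n)=2$, yet $w'(\gamma_n,\delta)=1$ for every $\delta>1/n$, so $\limsup_n\mathds{1}\{w'(\gamma_n,\delta)\ge\tfrac12\}=1$ for every $\delta>0$; the family $\{\delta_{\gamma_n}\}$ is not tight in $P(D(I:X))$ even though $\gamma_n\to 0$ in $L^1$. Nothing in your scheme rules out that the step-curve lifts develop exactly this kind of coalescing-jump behaviour, so tightness in $(D(I:X),d_{Sk})$ cannot be read off the BV bound; your compact-containment remark (``right-continuity forces the whole path into the compact set'') likewise does not survive the union bound over the $2^n$ grid points.

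The paper circumvents this entirely by running the compactness argument in $L^1(I:X)$: the coercive functional $\Phi$ combines the distance to a base point, a tightness function $\psi$ for the marginals, and the supremum of difference quotients, and its sublevels are compact in $L^1$ by Lisini's Riesz--Kolmogorov-type criterion. Prokhorov then gives a narrow limit $\pi\in P(L^1(I:X))$, which is shown to be concentrated on $BV(I:X)$ by lower semicontinuity and only afterwards transferred to $D(I:X)$ through the Borel selection map of \cref{lemma:selection}. If you want to salvage your route you must either prove the Skorokhod modulus condition by a genuinely different mechanism, or switch to the $L^1$ topology as the paper does. Two smaller points: weak convergence in the Skorokhod topology does not give $(e_t)_\#\pi_n\to(e_t)_\#\tilde\pi$ at \emph{every} dyadic $t$ (evaluation maps are not $d_{Sk}$-continuous; one only gets this outside a countable set of times, which still suffices for your density argument), and the marginal at $t=1$ together with the pointwise-variation refinement requires the explicit regluing with an optimal plan in $\opt(\mu_{1^-},\mu_1)$ carried out in Step 4 of the paper, which you only gesture at.
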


\begin{proof}\label{proof:lift_BV}
    Our proof firmly follows the one of \cite[Theorem 5]{Lisini} with modifications for BV-curves established in Section \ref{sec:preliminaries}.
    \\ For any integer $N\in \N$, we divide $I$ into $2^N$ pieces and denote $t^i\coloneqq i/2^N$ for $i=0,\cdots,2^N$. Let $X_i$ represent the $i$-th copy of $X$ and take the product space
    \begin{equation}
        \textbf{X}_{N} \coloneqq X_0\times X_1\times \cdots \times X_{2^N}.
    \end{equation}
     It is always possible (see e.g. \cite[Section 5.3]{AGSmetricflow}) to find $\eta_N\in P(\textbf{X}_N)$ s.t.
 \[
 \mathrm{Pr}^i_{\#}\eta_N=\mu_{t^i},\quad \mathrm{Pr}^{i,i+1}_{\#}\eta_{N}\in \mathrm{Opt}(\mu_{t^i},\mu_{t^{i+1}}),
 \]
 where $\mathrm{Opt}(\mu_{t^i},\mu_{t^{i+1}})$ is the set of optimal couplings between $\mu_{t^{i}}$ and $\mu_{t^{i+1}}$ and the maps $\mathrm{Pr}^i$, $\mathrm{Pr}^{i,j}$ are projections from $\textbf{X}_{N}$ to the $i$-th, $(i,j)$-th component, respectively. Finally, we define the filling map $\sigma:\textbf{x}=(x_0,\cdots ,x_{2^N})\in\textbf{X}_N\mapsto \sigma_x\in L^1(I:X)$ by 
 \[
 \sigma_x(t)\coloneqq x_i,\quad t\in[t^i,t^{i+1}); \quad \sigma_{\textbf{x}}(1)\coloneqq x_{2^N};
 \]
 and set $\pi_N\coloneqq \sigma_{\#}\eta_N\in P(L^1(I:X))$.\\
 \textbf{Step 1 (Tightness of $\{\pi_N:N\in\N\}$).} It is known that tightness of $\{\pi_N:N\in\N\}$ is equivalent to the existence of a function $\Phi:L^1(I:X)\rightarrow [0,\infty]$ whose sublevels $\lambda_c(\Phi)\coloneqq\{u\in L^1(I:X):\Phi(u)\leq c\}$ are compact in $L^1(I:X)$ for any $c\in\R_{+}$ and 
 \begin{equation}\label{ineq:TightnessPhi}
 \sup_{N\in\N} \int_{L^1(I:X)}\Phi(u)\d\pi_N(u)<\infty.
 \end{equation} 
 Clearly, $\{\mu_t:t\in[0,1]\}$ is bounded in $P_1(X)$, so for fixed $\bar{x}\in X$ 
 \begin{equation}
 C_1\coloneqq\sup_{t\in I}\int_{X}d(x,\bar{x})\d \mu_t(x)<\infty.
 \end{equation}
 From \cref{lma:precompactness}, $\{\mu_t:t\in[0,1]\}$ is precompact in $P_1(X)$, so by Prokhorov's theorem it is tight which means there is a $\psi:X\rightarrow [0,\infty]$ whose sublevels $\lambda_c(\psi)\coloneqq\{x\in X:\psi(x)\leq c\}$ are compact in $X$ for all $c\in\R_{+}$ and 
 \begin{equation}
 C_2\coloneqq \sup_{t\in I} \int_{X}\psi(x)\d \mu_t(x)<\infty.
 \end{equation}
We claim that the function 
\begin{equation}
    \Phi(u)\coloneqq \int^1_0d(u(t),\bar{x})\d t+ \int^1_0\psi(u(t))\d t+ \sup_{0<h<1}\int^{1-h}_0\frac{d(u(t),u(t+h))}{h}\d t,
\end{equation}
on $L^1(I:X)$ satisfies \eqref{ineq:TightnessPhi}.\\
Firstly, for each $\pi_N$, 
\begin{align}
\int_{L^1(I:X)}\int^1_0d(u(t),\bar{x})+\psi(u(t))\d t\d \pi_N &= \int^1_0\int_{\textbf{X}_N}d(\sigma_{\textbf{x}}(t),\bar{x})+\psi(\sigma_{\textbf{x}}(t))\d \eta_N(\textbf{x})\d t\\
&= \sum^{2^N-1}_{i=0}\int^{t^{i+1}}_{t^i}\int_X d(x,\bar{x})+\psi(x)\d \mu_{t^i}(x)\d t\\
&= \frac{1}{2^N}\sum_{i=0}^{2^N-1}\int_Xd(x,\bar{x})+\psi(x)\d \mu_{t^i}(x) \leq C_1+C_2.
\end{align}Secondly, from \cref{lemma:TmeasureL}, for every $u=\sigma_{\textbf{x}}$, $\textbf{x}=(x_0,x_1,...,x_{2^N})$, we have
\begin{align}
\sup_{0<h<1}\int^{1-h}_0\frac{d(u(t),u(t+h))}{h}\d t&=\sup_{0<h<1/2^N}\int^{1-h}_0\frac{d(u(t),u(t+h))}{h}\d t\\
&=\sup_{0<h<1/2^N}\sum^{2^N-2}_{i=0}\int^{t^{i+1}}_{t^{i+1}-h}\frac{d(\sigma_{\textbf{x}}(t),\sigma_{\textbf{x}}(t+h))}{h}\d t\\
&= \sum^{2^N-2}_{i=0}d(x_i,x_{i+1}).
\end{align}
Integrating the above equality over $\pi_N$, one has
\begin{align}
\int_{L^1(I:X)}\sup_{0<h<1}\int^{1-h}_0\frac{d(u(t),u(t+h))}{h}\d t\d\pi_N&=\int_{L^1(I:X)}\sum^{2^N-2}_{i=0}d(x_i,x_{i+1})\d\pi_N\\
&=\sum^{2^N-2}_{i=0}\int_{L^1(I:X)}d(x_i,x_{i+1})\d\pi_N\\
&=\sum^{2^N-2}_{i=0}W_1(\mu_{t^{i}},\mu_{t^{i+1}}) \\
&\leq\var(\mu;[0,1))\leq |D\mu|([0,1]),\label{ineq:pf2Thm3.3}
\end{align}
where the last inequality follows from the fact that $(\mu_t)\in D(I:P_1(X))$ (see \cref{lemma:normalizedBV}). \\
Combining two above estimates, we obtain
\begin{equation}
 \sup_{N\in\N} \int_{L^1(I:X)}\Phi(u)\d\pi_N(u)\leq C_1+C_2+|D\mu|([0,1])<\infty,
\end{equation} 
 which proves \eqref{ineq:TightnessPhi}.
 The precompact criterion in \cite[Theorem 2]{Lisini} guarantees all $\{\Phi\leq c\}$ are precompact. For the tightness of $\{\pi_N\}$, it remains to show that all sublevels of $\Phi$ are closed in $L^1(I:X)$. It suffices to prove $\Phi$ is lower semi-continuous with respect to $L^1$-convergence, which is a consequence of Fatou's Lemma. Indeed, given any $u_n\rightarrow u$ in $L^1$ (and it is not restrictive to assume further $u_n(t)\rightarrow u(t)$ for $\L^1$-a.e. $t\in I$), we have
 \begin{align}
 \sup_{0<h<1}\int^{1-h}_{0}\Delta_hu(t)\d t&= \sup_{0<h<1}\int^{1-h}_{0}\liminf_{n\rightarrow\infty}\Delta_hu_n(t)\d t\\
 &\leq \sup_{0<h<1}\liminf_{n\rightarrow\infty}\int^{1-h}_{0}\Delta_hu_n(t)\d t\\
 &\leq \liminf_{n\rightarrow\infty}\sup_{0<h<1}\int^{1-h}_{0}\Delta_hu_n(t)\d t.
 \end{align}
 In conclusion, by Prokhorov's theorem, there exists $\pi\in P(L^1(I:X))$ and a subsequence $N_k$ such that  $\pi_{N_k}\rightarrow \pi$  narrowly in $P(L^1(I:X))$ as $k\rightarrow \infty$.\\
 \textbf{Step 2 ($\pi$ is concentrated on $BV(I:X)$).} 
 As shown in the end of Step 1, the function 
 \[
 L^1(I:X)\ni u\mapsto \sup_{0<h<1}\int^{1-h}_{0}\Delta_hu(t)\d t
 \]
 is lower semi-continuous and bounded from below. So by narrowly convergence of $\pi_{N}$
 \begin{align}
     \int_{L^1(I:X)}\left(\sup_{0<h<1}\int^{1-h}_{0}\Delta_hu(t)\d t\right)\d \pi&\leq  \liminf_{N\rightarrow \infty}\int_{L^1(I:X)}\left(\sup_{0<h<1}\int^{1-h}_{0}\Delta_hu(t)\d t\right)\d \pi_N\\
     &\leq |D\mu|([0,1])<\infty,\label{pf3Theorem3.3}
 \end{align}
 where the second inequality comes from \eqref{ineq:pf2Thm3.3}.
 Therefore, 
 \[
 \sup_{0<h<1}\int^{1-h}_{0}\Delta_hu(t)\d t<\infty,\quad \text{for $\pi$-a.e. $u\in L^1(I:X)$}.
 \]
 By \cref{thm:DefBVcurves}, $\pi$ is concentrated on the Borel subset $BV(I:X)\subset L^1(I:X)$. Considering push-forward via the Borel selection map $T:BV(I:X)\subset L^1\rightarrow D(I:X)$ in \cref{lemma:selection}, we can construct the probability measure
 \[
 \tilde{\pi}\coloneqq T_{\#}\pi\in P(D(I:X)),
 \]
 which is concentrated on $\mathcal{BV}(I:X)$.\\
 \textbf{Step 3 (Proof of \ref{itm:marginalprop} and \ref{itm:totalvarian}).} Recall that for any BV-function $u$, 
 \begin{equation}
      |Du|((s,t))=\sup_{0< h<t-s}\int_s^{t-h} \Delta_h u(t)\d t,\quad 0\leq s<t\leq 1.
 \end{equation}
 Then we can repeat Step 1 to produce \eqref{pf3Theorem3.3} on each subinterval $[s,t]\subset I$:
 \begin{align}
     \int_{D(I:X)}|D\tilde{u}|\d\tilde{\pi}(\tilde{u})((s,t))&=\int_{L^1(I:X)}|D(T(u))|((s,t))\d\pi(u)\\
     &=\int_{L^1(I:X)}\sup_{0< h<t-s}\int_s^{t-h} \Delta_h u(t)\d t\d \pi(u)\\
     &\leq\liminf_{N\rightarrow \infty}\int_{L^1(I:X)}\left(\sup_{0<h<t-s}\int^{t-h}_{s}\Delta_hu(t)\d t\right)\d \pi_N\\
     &\leq |D\mu|([s,t]).
 \end{align}
Together with \cref{thm:BVSk}, \ref{itm:totalvarian} will be proved after obtaining \ref{itm:marginalprop}.\\
At last, for \ref{itm:marginalprop}, fix any test functions $\varphi\in C_b(X)$ and $\xi\in C_b(I)$. By noticing $u\mapsto \int_{[0,1]}\xi(t)\varphi(u(t))\d t$ is continuous on $L^1(I:X)$ and $(e_t)_{\#}\pi_N=\mu_{t^i}$, for each $N\in\N$, $t\in [t^i,t^{i+1})$, we have
\begin{align}
  \int_{D(I:X)}\int_0^1\xi(t)\varphi(u(t))\d t \d \tilde{\pi}(u) &=\int_{L^1(I:X)}\int_0^1\xi(t)\varphi(u(t))\d t \d \pi(u) \\
  &=\lim_{N\rightarrow\infty}\int_{L^1(I:X)}\int_0^1\xi(t)\varphi(u(t))\d t \d \pi_{N}(u)\\
  &=\lim_{N\rightarrow\infty}\int_0^1\xi(t)\int_{L^1(I:X)}\varphi(u(t)) \d \pi_{N}(u)\d t\\
  &=\lim_{N\rightarrow\infty}\sum^{2^N-1}_{i=0}\int^{t^{i+1}}_{t^i}\xi(t)\int_X\varphi(x)\d (e_t)_{\#}\pi_N(x)\d t\\
  &=\lim_{N\rightarrow\infty}\sum^{2^N-1}_{i=0}\int^{t^{i+1}}_{t^i}\xi(t)\int_X\varphi(x)\d \mu_{t^i}(x)\d t\\
  &= \lim_{N\rightarrow\infty}\sum^{2^N-1}_{i=0}\frac{1}{2^N}\xi(t^i)\int_X\varphi(x)\d \mu_{t^i}(x),\label{ineq:Riemsum}
\end{align}
where the last limit is guaranteed by continuity and boundedness of $\xi$ and $\varphi$.\\
Since $t\mapsto \mu_t\in D(I:P_1(X))$, the function
\[
t\mapsto \xi(t)\int_X\varphi(x)\d\mu_t(x)
\]
is in $D(I:\R)$ so it is continuous outside a set of countably many points, and in particular is Riemannian integrable. As a result, the limit of Riemann sums in \eqref{ineq:Riemsum} is equal to $\int\int\xi(t)\varphi(x)\d \mu_t\d t$. The arbitrariness of $\xi$ means
\begin{equation}\label{eq:pfThm3.3}
 \int_{D(I:X)}\varphi(u(t))\d \tilde{\pi}(u)=\int\varphi(x)\d \mu_t
\end{equation}
for $\L^1$-a.e. $t\in[0,1]$. 
By \cref{prop:right_continuity}, the function
\begin{equation}
    t\mapsto \int_{D(I:X)} \varphi(u(t))\d\tilde{\pi}(u)
\end{equation}
is right-continuous.
Therefore, \eqref{eq:pfThm3.3} holds for all $t\in[0,1)$ and $\varphi\in C_b(X)$, which implies $(e_t)_\#\tilde{\pi}=\mu_t$.\\
\textbf{Step 4 (Marginal constraint at $t=1$).} When $t\mapsto\mu_t$ is left-continuous at $t=1$, i.e. $\mu_{1^-}= \mu_{1}$, we can modify curves in $D(I:X)$ where $\tilde{\pi}$ is concentrated to let them be left-continuous at $t=1$. After that, both sides in \eqref{eq:pfThm3.3} depend continuously on $t$ around $t=1$, leading to $(e_{1})_{\#}\tilde{\pi}=\mu_1$.
\\
When the left limit is $\mu_{1^-}\neq \mu_{1}$, take $\sigma\in\opt(\mu_{1^-},\mu_1)$ and let us define the measure $\hat{\pi}\coloneqq C_{\#}\tilde{\pi}$ by performing the continuation at $t=1$, where 
\begin{equation}
  C\colon D(I:X)\to D(I:X),\quad C(u)(t)\coloneqq \left\{\begin{array}{ll}
        u(t),& t\in[0,1)\\
        \lim_{s\nearrow 1}u(s),& t=1
    \end{array}\right..
\end{equation}
Denote by $D_1(I:X)\subset D(I:X)$ the closed subset of all Càdlàg curves left-continuous at $t=1$.
Notice that there is a natural Borel isomorphism between $D_1(I:X)$ and $D([0,1):X)$. 
Therefore, we may regard $D(I:X)$ as the product of Polish spaces $D([0,1):X)$ and $X$.
In this way, $\hat\pi\in P(D([0,1):X)\times X)$ with $\mathrm{Pr}^2_\#\hat\pi=\mu_{1^-}$, and thus it can be glued together with $\sigma$ (along $X$) to obtain a probability measure $\omega$ on $D([0,1):X)\times X\times X$ by the gluing lemma, see e.g. \cite[Lemma 3.1]{Ambrosio2013UserGuide}. 
The projection $\mathrm{Pr}^{1,3}_{\#}\omega$ of $\omega$ on the first and third marginal will be the desired $\tilde{\pi}$ (with a slight abuse of notation, still denoted by $\tilde{\pi}$).
\\
Clearly, the new $\tilde{\pi}$ verifies \ref{itm:marginalprop} and \eqref{eq:Dmu}.
Additionally, from the construction that $\mathrm{Pr}^{2,3}\omega=\sigma\in \opt(\mu_{1^{-}},\mu_1)$, we have
\begin{equation}
    W_1(\mu_{1^{-}},\mu_1)=\int d(\gamma_{1^{-}},\gamma_1)\d\tilde{\pi}(\gamma).
\end{equation}
Together with \cref{lemma:normalizedBV}, it means 
\begin{equation}
    \var(\mu;I)=\int\var(\gamma;I)\d\tilde{\pi}(\gamma).
\end{equation}
\\
\textbf{Proof of the final claim.}
    Suppose first that $(\mu_t)$ is absolutely continuous, then in particular, it is a curve of bounded variation and thus \ref{itm:support}-\ref{itm:totalvarian} already hold. 
    To derive \eqref{eq:dotmu_BV}, observe that for $\mathcal{L}^1$-a.e. $t\in I$
    
    \begin{align}
        |\dot\mu|(t)
        & = \lim_{h \rightarrow 0 } \frac{1}{|h|}\int_{t}^{t+h} |\dot\mu|(s) \d s \\
        & = \lim_{h \rightarrow 0 } \frac{1}{|h|} |D\mu|([t,t+h]) \\
        & = \lim_{h \rightarrow 0 } \int \frac{  |D\gamma|([t,t+h])}{|h|} \d\pi(\gamma) \\
        & \geq \lim_{h \rightarrow 0 } \int \frac{  d (\gamma_{t}, \gamma_{t+h})}{|h|} \d\pi(\gamma) \\ & \geq \lim_{h \rightarrow 0 } \frac{W_1(\mu_t,\mu_{t+h})}{|h|} \eqqcolon |\dot\mu_t| = |\dot\mu|(t)
    \end{align}
    where in the third step, we used \eqref{eq:Dmu}. Therefore, the statement follows whenever $(\mu_t)$ is absolutely continuous.
    \\
    We now claim that the argument above works also in the non-absolutely continuous case.
    Indeed, \cref{lemma:metric_derivative} guarantees the equivalence between $|\dot\mu|(t)$ and the second line as well as the validity of the last equality (recall the notation explained in \cref{subsec:BV-curves}). In the other steps, absolute continuity was not used. Hence, the conclusion follows.
 \end{proof}

\subsection{Remarks on the main results}\label{subsec:remarks_results}
In this section, we shed light on the main results by providing some examples.

First of all, note that in general, we can not expect any uniqueness of $\tilde{\pi}$ in \cref{thm:lift_BV}. Indeed, the uniqueness is not true even in Lisini's original result \cite[Theorem 5]{Lisini} for $p>1$. However, when $p>1$, there are cases where the lift is unique. For instance, when the underlying space is non-branching, then the lift of any constant-speed geodesic must be unique (see e.g. \cite[Proposition 3.16]{Ambrosio2013UserGuide}).
The following example illustrates that in the case $p=1$, where the cost lacks strict convexity, uniqueness fails throughout, even in Euclidean spaces.

\begin{example}[Nonuniqueness of lifts]\label{exp:nonuniquelift} Let $\mu_0$ and $\mu_1$ be two probability measures on $\R$ supported inside $[-2,-1]$ and $[1,2]$, respectively. Clearly $t\mapsto\mu_t\coloneqq(1-t)\mu_0+t\mu_1$ is a constant-speed geodesic under $W_1$-distance and notice that every coupling between $\mu_0$ and $\mu_1$ is optimal. For any coupling induced by a Borel map $T$, let us define a family of curves labelled by $\alpha \in [0,1]$ and $x \in \text{supp}(\mu_0)$ in the following way
\begin{equation}
t \mapsto \gamma_t^{(\alpha,x)} \coloneqq x\mathds{1}_{[0,\alpha)}(t)+T(x)\mathds{1}_{[\alpha,1]}(t). 
\end{equation}
This is in fact a generalized version of the construction in \cref{ex:theBeginning} (in which we had $\text{supp}(\mu_0) =\{ 0\}$ and $T(x) = 1$).
Now, similar to that example, one can check that the measure $$\pi\coloneqq (\gamma^{(\cdot,\cdot)})_{\#}(\mathcal{L} |_{[0,1]} \otimes \mu_0 )$$ satisfies \ref{itm:support}-\ref{itm:totalvarian} in the theorem above.
Whenever there exist at least two transport maps (e.g. in the case that $\mu_0$ and $\mu_1$ are uniformly distributed) then the lift $\pi\in P (D(I:X))$ of $(\mu_t)$ will no longer be unique.  
\end{example}

Another natural question is to what extend, an AC-curve in the $1$-Wasserstein space has lifts on AC-curves satisfying the optimality condition \eqref{eq:Dmu}.
Recall that \cref{ex:theBeginning} already provides an extreme case where no lift on continuous curves is possible.
The example below demonstrates that the existence of lifts on AC-curves is not a guarantee for finding an optimal one.

\begin{example}[Non-optimality of AC lifts]\label{exp:AC_not_enough}
    Take $\gamma \in\mathcal{AC}(I:\R^2)$ with unit length and assume $\gamma$ is not length-minimizing, i.e., $d(\gamma_0,\gamma_1)<1$. 
    Consider $(\mu_t) \subset P_1(\R^2)$ defined as 
    \begin{equation}
        \mu_t \coloneqq \frac{1}{2} \Big( (1-t) \delta_{\gamma_0} + t \delta_{\gamma_1}\Big) + \frac{1}{2} \mathcal{H}^1|_{\gamma}, \quad t \in  [0,1],
    \end{equation}
    where $\mathcal{H}^1|_{\gamma}$ is the 1-dimensional Hausdorff measure of $\gamma$.
    First of all, it can be readily checked that $(\mu_t)$ is a constant-speed geodesic.
    Secondly, there exists a lift $\pi$ of $(\mu_t)$ concentrated on AC-curves. For example, take two families of AC-curves
    \begin{equation}
        \gamma^{(1,\alpha)}(t)\coloneqq\left\{\begin{array}{ll}
           \gamma(0),  & t\leq\alpha \\
            \gamma(t-\alpha),& t>\alpha
        \end{array}\right.,\quad\gamma^{(2,\alpha)}(t)\coloneqq\left\{\begin{array}{ll}
           \gamma(t+\alpha),  & t\leq1-\alpha \\
            \gamma(1),& t>1-\alpha
        \end{array}\right..
    \end{equation}
    Then $\pi\coloneqq \frac12\gamma^{(1,\cdot)}_{\#}\L^1+\frac12\gamma^{(2,\cdot)}_{\#}\L^1$ can be checked to satisfy $(e_t)_{\#}\pi=\mu_t$ for each $t\in [0,1]$.
    However, $|\dot\mu_t| <  \int |\dot\gamma_t| \d\pi(\gamma)$.
    Actually, there is no way for any lift $\pi$ concentrated on AC-curves to achieve the equality \eqref{eq:Dmu}. 
    Because if $(\mu_t)$ is optimally transported along continuous curves, they have to be length-minimizing. 
    But on the other hand, (almost all) curves in $\pi$ have to lie inside $\gamma$, as each $\mathrm{supp}(\mu_t)$ does.
\end{example}

Observe that if $\gamma$ in the example above is length-minimizing, then the constructed lift is indeed optimal. In this case, all measures $\mu_t$ live in a convex set. 
One can ask whether the strategy above could yield an optimal lift concentrated on AC-curves if we restrict all $\mu_t$ to be fully supported on a common convex domain. 
We further add the assumption $\mu_t=\rho_t\L^n$ of absolute continuity of measures, trying to exclude the teleporting phenomenon, which appears for instance when replacing the one-dimensional Hausdorff measure by a higher-dimensional one in \cref{exp:AC_not_enough}.
However, such convincing-sounded assumptions (even with uniform bounds on densities $\rho_t$) turn out to fail again for obtaining a lift on AC-curves, at least in higher dimensions, as shown below. 
This is again to emphasize that it is necessary to relax the classical notion of lift and consider a larger class of curves.

\begin{example}\label{exp:2D_example}
    Consider a curve of probability measures on $\R^2$ defined as
    \begin{align} \label{eq:mu_2D_example}
        \mu_t\coloneqq \frac12\rho_t\L^2+\frac12\L^2\lvert_{[0,1]\times[0,1]}, \quad t \in  [0,1],
    \end{align}
    where the density $\rho_t\colon \R^2\to \R$ at time $t$ is defined as 
    \begin{align}
        \rho_t\coloneqq \left\{\begin{array}{ll}
            \frac1\varepsilon, & \quad\mathrm{if\ } x\in [0,\varepsilon]\times[t,1]\bigcup[1-\varepsilon,1]\times[0,t],  \\
            0 & \quad\mathrm{otherwise}. 
        \end{array}\right.
    \end{align}
     Here $\frac12>\varepsilon>0$ is a fixed parameter. Measure $\mu_t$ is shown in \cref{fig:2D_example} (left).
     Since the curve $t\mapsto \rho_t\L^2$ is a constant speed 1-Wasserstein geodesic, so is the curve $(\mu_t)$. 
    While the curve $(\mu_t)$ has infinitely many lifts, all of them have the property that they (up to neglecting a zero measure set of curves) only transport horizontally. This allows us to reduce the study of a possible lift to the 1-dimensional problem of the following measures on $[0,1]$. 
    Given any $y\in [0,1]$ (corresponding to the slices of the measures $\mu_t$ at height $y$), we define a curve of probability measures on $\R$ as
    \begin{align}\label{eq:mu-y_2D_example}
        \mu_t^y\coloneqq \begin{cases} \frac1{2\varepsilon}\L^1\lvert_{[0,\varepsilon]}+\frac12\L^1\lvert_{[0,1]},&\quad\mathrm{if\ } t\le y ;\\
        \frac1{2\varepsilon}\L^1\lvert_{[1-\varepsilon,1]}+\frac12\L^1\lvert_{[0,1]},&\quad\mathrm{if\ } t> y;
        \end{cases}
    \end{align}
    \\
    as shown in \cref{fig:2D_example} (right). Consider now any lift $\pi^y$ of $(\mu^y_t)$ as in the \cref{thm:lift_BV}.
    Since $|D\mu^y|=|D\mu^y|^J=\frac{1-\varepsilon}2\delta_y$, we have that $\pi^y(\Gamma_y)=\frac{1-\varepsilon}2>0$, where $\Gamma^y$ is the collection of curves that jump at $t=y$.
    We conclude that any optimal lift $\pi$ of $(\mu_t)$ as in \cref{thm:lift_BV} gives positive mass for the set of non-absolutely continuous curves.
    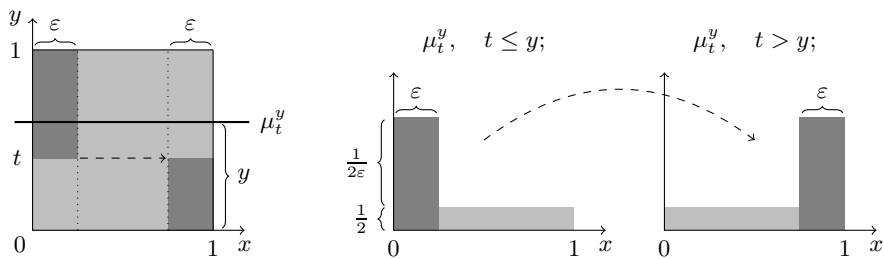
\begin{figure}[h]
    \begin{tikzpicture}[scale=2.4/4]
    \path [fill=lightgray] (0,0) rectangle (4,4);
    \path [fill=gray] (0,2/5*4) rectangle (1,4);
    \path [fill=gray] (3,0) rectangle (4,2/5*4);
    \draw [<->] (0,4.7) node [left] {\scriptsize $y$} -- (0,0) -- (4.7,0) node [below] {\scriptsize $x$};
    \draw (0.1,0.1) node[anchor=north east] {\scriptsize $0$};
    \draw (0,2/5*4) node[anchor=east] {\scriptsize $t$};
    \draw (0,4) node[anchor=east] {\scriptsize $1$};
    \draw (4,0) node[anchor=north] {\scriptsize $1$};
    \draw [decorate, decoration = {brace}] (0+0.05,4+0.1) --  (1-0.05,4+0.1);
    \draw (0.5,4+0.2) node[anchor=south] {\scriptsize $\varepsilon$};
    \draw [decorate, decoration = {brace}] (3+0.05,4+0.1) --  (4-0.05,4+0.1);
    \draw (3.5,4+0.2) node[anchor=south] {\scriptsize $\varepsilon$};
    \draw (4,0) -- (4,4);
    \draw (4,4) -- (0,4);
    \draw [dashed][->] (1+0.05,2/5*4) -- (3-0.05,2/5*4);
    \draw [dotted] (1,0) -- (1,4);
    \draw [dotted] (3,0) -- (3,4);
    \draw [decorate, decoration = {brace,mirror}] (4.2,0+0.05) --  (4.2,4*3/5-0.05);
    \draw (4.3,4*3/5/2) node[anchor=west] {\scriptsize $y$};
    \draw [thick] (-0.4,3/5*4) -- (4.8,3/5*4);
    \draw (4.8,3/5*4) node[anchor=west] {\scriptsize $\mu_t^y$};
    \path [fill=lightgray] (8,0) rectangle (8+4,0.5);
    \path [fill=gray] (8,0) rectangle (8+1,2.5);
    \path [fill=lightgray] (6+8,0) rectangle (6+8+4,0.5);
    \path [fill=gray] (6+3+8,0) rectangle (6+3+8+1,2.5);
    \draw [<->] (8,3.5) -- (8,0) -- (8+4.7,0) node [below] {\scriptsize $x$};
    \draw (8,0) node[anchor=north] {\scriptsize $0$};
    \draw (8+4,0) node[anchor=north] {\scriptsize $1$};
    \draw [decorate, decoration = {brace}] (8+0.05,2.5+0.1) --  (8+1-0.05,2.5+0.1);
    \draw (8+0.5,2.5+0.2) node[anchor=south] {\scriptsize $\varepsilon$};
    \draw [decorate, decoration = {brace}] (8-0.2,0) --  (8-0.2,0.5);
    \draw (8-0.3,0.5/2) node[anchor=east] {\scriptsize $\frac{1}{2}$};
    \draw [decorate, decoration = {brace}] (8-0.2,0.5+0.05) --  (8-0.2,2.5-0.05);
    \draw (8-0.3,3/2) node[anchor=east] {\scriptsize $\frac{1}{2\varepsilon}$};
    \draw (8+2,3.7) node[anchor=south] {\scriptsize $\mu_t^y, \quad t \leq y;$};
    \draw [<->] (6+8,3.5) -- (6+8,0) -- (6+8+4.7,0) node [below] {\scriptsize $x$};
    \draw (8+6,0) node[anchor=north] {\scriptsize $0$};
    \draw (8+4+6,0) node[anchor=north] {\scriptsize $1$};
    \draw [decorate, decoration = {brace}] (8+6+3+0.05,2.5+0.1) --  (8+6+3+1-0.05,2.5+0.1);
    \draw (8+6+3+0.5,2.5+0.2) node[anchor=south] {\scriptsize $\varepsilon$};
    \draw (6+8+2,3.7) node[anchor=south] {\scriptsize $\mu_t^y, \quad t > y;$};
    \draw [dashed][->] (8+2,2) .. controls (8+2+2,3.5) and (8+2+6-2,3.5) .. (8+2+6,2);
\end{tikzpicture}
\captionsetup{font=scriptsize}
\caption{(\cref{exp:2D_example}) \textbf{Left} image shows the measure $\mu_t$ in \eqref{eq:mu_2D_example} for $\varepsilon =1/4$ at time $t=2/5$. \textbf{Right} image shows the conditional measure $\mu_t^y$ in \eqref{eq:mu-y_2D_example} given $y=3/5$ at different times.}
\label{fig:2D_example}
\end{figure}
\end{example}

\section{Applications}\label{sec:applications}
\subsection{Characterization of BV-curves}

An immediate consequence of combining \cref{thm:BVSk} and \cref{thm:lift_BV} is that one can characterize BV-curves in 1-Wasserstein spaces:

\begin{corollary}[Characterization of BV-curves in 1-Wasserstein spaces]\label{cor:chrtz_BV}
    Let $(\mu_t)\subset P(X)$ with $\mu_0 \in P_1(X)$.
    Then
        $(\mu_t) \in \mathcal{BV}(I:P_1(X))$ if and only if there exists $\pi\in P(D(I:X))$ so that
        \begin{enumerate}[label=(\roman*),font=\normalfont,leftmargin=30pt]
            \item $\pi$ is concentrated on $\mathcal{BV}(I:X)\subset D(I:X)$;
            \item $(e_t)_\#\pi=\mu_t$ for all $t\in [0,1]$; 
            \item \label{itm:san} 
            $
            \int |D \gamma| (I) \d \pi < \infty.
            $
            \end{enumerate}
\end{corollary}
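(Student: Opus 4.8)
The plan is to prove the corollary as a clean application of the two main theorems, which together already handle the two directions. The statement is an ``if and only if,'' so I would split the proof accordingly.

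\textbf{The ``only if'' direction (forward).} Suppose $(\mu_t)\in\mathcal{BV}(I:P_1(X))$. Here I would simply invoke \cref{thm:lift_BV}: it produces a probability measure $\tilde\pi\in P(D(I:X))$ satisfying properties \ref{itm:support} and \ref{itm:marginalprop}, which are exactly conditions (i) and (ii) of the corollary. It remains to check condition \ref{itm:san}, the integrability $\int|D\gamma|(I)\d\pi<\infty$. This follows from the total variation identity \eqref{eq:Dmu}, since integrating it over $I$ gives $\int|D\gamma|(I)\d\tilde\pi(\gamma)=|D\mu|(I)$, and the latter is finite because $(\mu_t)$ is a BV-curve. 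So the lift constructed in \cref{thm:lift_BV} does all the work for this direction.

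\textbf{The ``if'' direction (backward).} Suppose there exists $\pi\in P(D(I:X))$ satisfying (i), (ii), (iii). The hypotheses (i) and (iii) are precisely the assumptions \eqref{eq:Dmu_inequality} of \cref{thm:BVSk} (concentration on $\mathcal{BV}(I:X)$ and finite integrated variation). The additional hypothesis $\mu_0\in P_1(X)$ is assumed in the corollary statement and matches the requirement $\mu_0=(e_0)_\#\pi\in P_1(X)$ of that theorem; here I would note that $(e_0)_\#\pi=\mu_0$ holds by condition (ii) at $t=0$. Then \cref{thm:BVSk} directly yields that $t\mapsto\mu_t=(e_t)_\#\pi$ belongs to $\mathcal{BV}(I:P_1(X))$, which is the desired conclusion, and as a bonus gives the bound \eqref{eq:easyineq}.

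\textbf{Main obstacle.} Frankly, there is no serious obstacle: the corollary is essentially a repackaging of \cref{thm:BVSk} (giving the sufficiency) and \cref{thm:lift_BV} (giving the necessity), so the proof is a matter of verifying that the hypotheses of each theorem match the conditions listed. The one small point requiring a moment's care is making sure all objects are well-defined under the stated hypotheses---in particular that $(\mu_t)\subset P(X)$ together with $\mu_0\in P_1(X)$ and the existence of a suitable $\pi$ guarantee $\mu_t\in P_1(X)$ for all $t$; but this is exactly what the estimate \eqref{ineq:pfBVsk1} in the proof of \cref{thm:BVSk} establishes. Thus I would keep the proof to a few lines, citing the two theorems and pointing out the matching of hypotheses, rather than reproving anything.
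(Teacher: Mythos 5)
Your proposal is correct and matches the paper exactly: the paper presents this corollary as an immediate consequence of combining \cref{thm:lift_BV} (for the forward direction, with \eqref{eq:Dmu} giving the integrability in (iii)) and \cref{thm:BVSk} (for the backward direction). Nothing further is needed.
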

Characterization of AC-curves in 1-Wasserstein space \emph{using their lifts} however remains challenging. 
A naive extension of the characterization in the case $p>1$ to the case $p=1$ would lead to $\int \int_{I} |\dot\gamma_t| \d t \d \pi < \infty$, which is a well-defined condition as the metric derivative for BV-curves still exists $\mathcal{L}^1$-a.e. 
However, this condition does not guarantee even continuity, let alone absolute continuity.
In fact, it is already weaker than \ref{itm:san}.
Recall \cref{ex:theBeginning}, where \emph{any} lift of the absolutely continuous curve $(\mu_t)$ is concentrated on discontinuous curves.
Continuous curves of bounded variation, on the other hand, can be easily characterized, for which the following observation is useful: 

\begin{proposition}\label{prop:jump}
Under the assumptions of \cref{thm:lift_BV}, we have for all $t\in I$
\begin{equation}\label{eq:Dmu_J}
    |D\mu|^{J} ( \{t\} ) = \int |D\gamma|^{J} ( \{t\} ) \d\pi (\gamma).
\end{equation}
\end{proposition}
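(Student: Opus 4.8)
The plan is to read \eqref{eq:Dmu} as an equality of \emph{measures} on $I$, so that it may be tested against the singleton $\{t\}$, and to combine this with the elementary observation that the purely atomic (jump) part of a finite Borel measure on the line agrees with the measure itself on every singleton. Concretely, for any finite Borel measure $\nu$ on $I$ with Lebesgue decomposition $\nu=\nu^{ac}+\nu^{C}+\nu^{J}$, both the absolutely continuous part and the Cantor part are nonatomic, whence $\nu^{ac}(\{t\})=\nu^{C}(\{t\})=0$ and therefore $\nu^{J}(\{t\})=\nu(\{t\})$. This reduces the claimed jump identity to the already-established measure identity, evaluated at a point.

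First I would apply this observation to $\nu=|D\mu|$, obtaining $|D\mu|^{J}(\{t\})=|D\mu|(\{t\})$, and to $\nu=|D\gamma|$ for each BV-curve $\gamma$, obtaining $|D\gamma|^{J}(\{t\})=|D\gamma|(\{t\})$ pointwise. This is legitimate for $\pi$-a.e.\ $\gamma$: by \cref{thm:lift_BV}\ref{itm:support} the measure $\pi$ is concentrated on $\mathcal{BV}(I:X)$, and evaluating \eqref{eq:Dmu} on $I$ gives $\int|D\gamma|(I)\,\d\pi(\gamma)=|D\mu|(I)<\infty$, so that $|D\gamma|$ is a finite Borel measure for $\pi$-a.e.\ $\gamma$. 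Next I would evaluate \eqref{eq:Dmu} on the Borel set $\{t\}$, which yields
\[
|D\mu|(\{t\})=\Big(\int|D\gamma|\,\d\pi(\gamma)\Big)(\{t\})=\int|D\gamma|(\{t\})\,\d\pi(\gamma).
\]
Chaining the three displayed identities then produces exactly \eqref{eq:Dmu_J}.

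The only point requiring care — and the closest thing to an obstacle — is the measure-theoretic bookkeeping in passing from the equality ``as measures'' to the equality on a single point. One must check that $\gamma\mapsto|D\gamma|(\{t\})$ is $\pi$-measurable and that the measure-valued integral assigns the value $\int|D\gamma|(\{t\})\,\d\pi(\gamma)$ to $\{t\}$. Measurability follows from the Borel measurability of $\gamma\mapsto|D\gamma|(U)$ for open $U$ (recorded in the remark after \cref{thm:BVSk}) by writing $\{t\}=\bigcap_{n}(t-\tfrac1n,t+\tfrac1n)$ and using continuity from above of the finite measures $|D\gamma|$; the value on $\{t\}$ is the decreasing limit of the values on these open intervals, and dominated convergence under $\pi$ (the intervals have $|D\gamma|$-mass bounded by $|D\gamma|(I)$, which is $\pi$-integrable) transfers this limit through the integral. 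With this in place the argument is immediate, and no feature of the construction of $\pi$ in \cref{thm:lift_BV} is needed beyond the measure identity \eqref{eq:Dmu}.
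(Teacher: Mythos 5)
Your proposal is correct and follows essentially the same route as the paper: the paper's proof is exactly the chain $|D\mu|^{J}(\{t\})=|D\mu|(\{t\})=\int|D\gamma|(\{t\})\,\d\pi=\int|D\gamma|^{J}(\{t\})\,\d\pi$, obtained by noting that the absolutely continuous and Cantor parts vanish on singletons and then evaluating \eqref{eq:Dmu} at $\{t\}$. Your additional measurability bookkeeping is sound but not something the paper spells out.
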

\begin{proof}
    By considering the atomic part in the Lebesgue decomposition of the variation measure and using equation \eqref{eq:Dmu}, we obtain
    \begin{equation}
            |D\mu|^{J} ( \{t\} )
        = |D\mu| ( \{t\} )  =  \int |D\gamma| ( \{t\} ) \d\pi (\gamma) = \int |D\gamma|^{J} ( \{t\} ) \d\pi. 
    \end{equation}
\end{proof}
Equation \eqref{eq:Dmu_J} simply means that the jump size in the 1-Wasserstein space is obtained by taking the average over all jumps in the underlying space.
An implication is that if $\mu_t$ jumps at time $t$ (i.e. the left-hand side of \eqref{eq:Dmu_J} is non-zero), then at least some curves must jump at this time as well.
Notice that this does not contradict the observation in \cref{ex:theBeginning}. 
Even though all the underlying curves in the example jump, this does not lead to a jump in $(\mu_t)$ since at any time $t$, only one curve jumps and thus has measure zero in the lift.
As a result, we conclude that a BV-curve $(\mu_t)$ is continuous if and only if for all $t\in I$, the set of curves $(\gamma_t)$ which has jump at $t$ has measure zero.
This is formally stated in the corollary below:

\begin{corollary}[Characterization of continuous BV-curves in 1-Wasserstein spaces]\label{cor:chrtz_CBV}
    We have $(\mu_t) \in \mathcal{BV}(I:P_1(X)) \cap C(I:P_1(X))$ if and only if there exists $\pi\in P(D(I:X))$ that in addition to (i)-(iii) of \cref{cor:chrtz_BV}, satisfies
    \begin{enumerate}[label=(\roman*),font=\normalfont,leftmargin=30pt]\setcounter{enumi}{3}
    \item for all $t\in I$
    $$\int|D\gamma|(\{t\})\d\pi(\gamma)=0.$$
    \end{enumerate}
\end{corollary}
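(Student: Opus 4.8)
\textbf{Proof proposal for \cref{cor:chrtz_CBV}.}

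The plan is to leverage \cref{thm:lift_BV} for the existence of a suitable lift in one direction, and \cref{thm:BVSk} together with \cref{prop:jump} for the converse. I would begin with the forward implication: assume $(\mu_t)\in\mathcal{BV}(I:P_1(X))\cap C(I:P_1(X))$. Since $(\mu_t)$ is in particular a BV-curve, \cref{thm:lift_BV} furnishes a lift $\pi=\tilde\pi\in P(D(I:X))$ satisfying (i)-(iii) and moreover \eqref{eq:Dmu}. The point is then that continuity of $(\mu_t)$ forces the jump part $|D\mu|^J$ to vanish identically, so that $|D\mu|^J(\{t\})=0$ for every $t\in I$. Applying \cref{prop:jump} then gives $\int|D\gamma|^J(\{t\})\d\pi(\gamma)=0$, and since $|D\gamma|(\{t\})=|D\gamma|^J(\{t\})$ for each fixed $t$ (a single point carries no absolutely continuous or Cantor mass), this yields exactly condition (iv). This direction is essentially immediate once one records that the continuity of a Càdlàg $P_1$-valued curve is equivalent to the absence of atoms in its variation measure.

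For the converse, suppose $\pi\in P(D(I:X))$ satisfies (i)-(iv). By \cref{cor:chrtz_BV} (or directly \cref{thm:BVSk}), conditions (i)-(iii) already guarantee that $(\mu_t)\coloneqq((e_t)_\#\pi)$ belongs to $\mathcal{BV}(I:P_1(X))$, so it remains only to upgrade this to continuity. Here I would invoke \cref{prop:jump} again: hypothesis (iv) gives $\int|D\gamma|(\{t\})\d\pi=0$, and since on a singleton $\{t\}$ the variation measure equals its jump part, \eqref{eq:Dmu_J} reads $|D\mu|^J(\{t\})=|D\mu|(\{t\})=0$ for every $t\in I$. Thus $|D\mu|$ has no atoms, which means the monotone function $t\mapsto|D\mu|([0,t])$ is continuous, and hence the Càdlàg representative of $(\mu_t)$ has no jumps, i.e. $(\mu_t)\in C(I:P_1(X))$.

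The conceptual core of both directions is the clean equivalence, for a Càdlàg BV-curve, between continuity and the vanishing of all atoms of the variation measure; this is where \cref{rem:NEWEquiV1&4} and \cref{lemma:normalizedBV} are used to translate an atom $|D\mu|(\{t\})>0$ into a genuine jump $W_1(\mu_{t^-},\mu_t)>0$ of the curve. I do not expect a serious obstacle, since \cref{prop:jump} does essentially all the analytic work by transporting the atomic identity between the Wasserstein curve and its lift; the only point requiring minor care is the elementary but necessary observation that $|D\gamma|^J(\{t\})=|D\gamma|(\{t\})$ pointwise, so that condition (iv) as stated (with $|D\gamma|$ rather than $|D\gamma|^J$) is equivalent to the jump-based formulation appearing in \eqref{eq:Dmu_J}. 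One should also confirm that the measurability of $\gamma\mapsto|D\gamma|(\{t\})$ poses no issue, which follows from the same measurability considerations already noted after \cref{thm:BVSk}.
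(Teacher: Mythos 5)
Your argument is correct and follows essentially the same route as the paper, which obtains the corollary from \cref{cor:chrtz_BV} together with \cref{prop:jump} and the observation that, for a C\`adl\`ag BV-curve, continuity is equivalent to the absence of atoms in the variation measure. The only imprecision is in the converse direction, where you invoke \cref{prop:jump} for an arbitrary $\pi$ satisfying (i)--(iii) even though that proposition is stated for the optimal lift of \cref{thm:lift_BV}; there you should instead use the one-sided inequality \eqref{eq:easyineq} from \cref{thm:BVSk}, which gives $|D\mu|(\{t\})\le\int|D\gamma|(\{t\})\d\pi(\gamma)=0$ and is all that is needed.
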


\subsection{Characterization of geodesics}
Here we apply the main theorem to study geodesics in 1-Wasserstein spaces. To this end, we shall consider a relaxed notion of geodesic:
\begin{definition}[$\mathcal{BV}$-geodesics]\label{def:BV-geodesics}
We call $\gamma \in D([0,1]:X)$ a $\mathcal{BV}$-geodesic with respect to distance $d$ if $d(\gamma_0,\gamma_1)=|D \gamma|([0,1])$.
\end{definition}

\begin{theorem}[Characterization of geodesics in 1-Wasserstein spaces]\label{thm:WassGeo}
Let $(\mu_t) \subset P(X)$ with $\mu_0 \in P_1(X)$. Then $(\mu_t)$ is
\begin{itemize}[leftmargin=*]
    \item \textbf{$\mathcal{BV}$-geodesic} with respect to $W_1$ distance if and only if there exists $\pi\in P(D(I:X))$ so that 
    \begin{enumerate}[label=(\roman*),font=\normalfont,leftmargin=30pt]
        \item \label{itm:concgeo} $\pi$ is concentrated on the set of $\mathcal{BV}$-geodesics;
        \item \label{itm:liftgeo}$(e_t)_\#\pi=\mu_t$ for all $t \in I$;
        \item \label{itm:koo} $W_1(\mu_0,\mu_1)=\int d(\gamma_0,\gamma_1)\d\pi(\gamma)< \infty$.
    \end{enumerate}
    \item continuous and \textbf{length minimising} if and only if in addition to \ref{itm:concgeo}-\ref{itm:koo}, $\pi$ satisfies
    \begin{enumerate}[label=(\roman*),font=\normalfont,leftmargin=30pt]\setcounter{enumi}{3}
    \item for all $t\in I$ $$\int|D\gamma|(\{t\})\d\pi(\gamma)=0.$$
    \end{enumerate}
    \item \textbf{constant-speed geodesic} if and only if in addition to (i)-(iii), $\pi$ satisfies
    \begin{enumerate}[label=(\roman*),font=\normalfont,leftmargin=30pt]\setcounter{enumi}{4}
    \item\label{itm:csg} for $\mathcal{L}^1$-a.e. $t\in I$ $$\lim_{h \rightarrow 0 } \int \frac{  d (\gamma_{t}, \gamma_{t+h})}{|h|} \d\pi(\gamma) = W_1(\mu_0, \mu_1).$$ 
    \end{enumerate}
\end{itemize}
\end{theorem}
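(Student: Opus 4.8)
The plan is to reduce all three equivalences to the lift theorem \cref{thm:lift_BV} and its companion \cref{thm:BVSk}, so that every metric statement about $(\mu_t)$ is read off from the relation between $|D\mu|$ and the fiberwise variation measures $\gamma\mapsto|D\gamma|$. Throughout I will repeatedly use the pointwise bound $d(\gamma_0,\gamma_1)\le|D\gamma|(I)$ and the coupling bound $W_1(\mu_s,\mu_t)\le\int d(\gamma_s,\gamma_t)\,\d\pi$, valid for any lift $\pi$ of $(\mu_t)$.

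For the $\mathcal{BV}$-geodesic equivalence, in the forward direction I would apply \cref{thm:lift_BV} to get a lift $\tilde\pi$ with $(e_t)_\#\tilde\pi=\mu_t$ and $|D\mu|=\int|D\gamma|\,\d\tilde\pi$. Since $(e_0,e_1)_\#\tilde\pi$ couples $\mu_0$ and $\mu_1$, the chain $W_1(\mu_0,\mu_1)\le\int d(\gamma_0,\gamma_1)\,\d\tilde\pi\le\int|D\gamma|(I)\,\d\tilde\pi=|D\mu|(I)=W_1(\mu_0,\mu_1)$, whose last equality is the $\mathcal{BV}$-geodesic hypothesis, collapses to equalities; the vanishing of the nonnegative integrand $|D\gamma|(I)-d(\gamma_0,\gamma_1)$ then forces $\tilde\pi$ to concentrate on $\mathcal{BV}$-geodesics, giving \ref{itm:concgeo}--\ref{itm:koo}. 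Conversely, given $\pi$ with \ref{itm:concgeo}--\ref{itm:koo}, property \ref{itm:concgeo} rewrites \ref{itm:koo} as $\int|D\gamma|(I)\,\d\pi=W_1(\mu_0,\mu_1)<\infty$, so \cref{thm:BVSk} yields $(\mu_t)\in\mathcal{BV}$ and $|D\mu|(I)\le\int|D\gamma|(I)\,\d\pi=W_1(\mu_0,\mu_1)$; together with the always-true bound $W_1(\mu_0,\mu_1)\le|D\mu|(I)$ (the total variation dominates the endpoint distance, cf.\ \cref{lemma:normalizedBV}) this gives equality.

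The key observation to record before the last two parts is that \eqref{eq:Dmu} holds for \emph{every} lift satisfying \ref{itm:concgeo}--\ref{itm:koo}, not merely the one constructed in \cref{thm:lift_BV}: \cref{thm:BVSk} gives $|D\mu|\le\int|D\gamma|\,\d\pi$ as measures, and the computation above shows both sides carry the same total mass $W_1(\mu_0,\mu_1)$, hence they coincide. Consequently the derivation of \eqref{eq:dotmu_BV}, which used only \eqref{eq:Dmu}, Lebesgue differentiation, and \cite[Theorem 2.2]{Ambrosio1990MetricSV}, is valid for any such $\pi$; likewise \cref{prop:jump} holds for any such $\pi$. From \cref{prop:jump}, $|D\mu|(\{t\})=\int|D\gamma|(\{t\})\,\d\pi$, so condition (iv) holds iff $|D\mu|$ has no atoms iff $(\mu_t)$ is continuous, and a continuous $\mathcal{BV}$-geodesic is exactly a length-minimising curve; this settles the second part. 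For the constant-speed part, \eqref{eq:dotmu_BV} turns (v) into $|\dot\mu_t|=W_1(\mu_0,\mu_1)$ for a.e.\ $t$, whence $\int_0^1|\dot\mu_t|\,\d t=W_1(\mu_0,\mu_1)=|D\mu|(I)$ forces the Cantor and jump parts of $|D\mu|$ to vanish; thus $|D\mu|=W_1(\mu_0,\mu_1)\,\mathcal{L}^1$, and a triangle-inequality sandwich upgrades $W_1(\mu_s,\mu_t)\le(t-s)W_1(\mu_0,\mu_1)$ to equality, i.e.\ constant speed. The reverse implications in both parts follow by reading \eqref{eq:dotmu_BV} and \cref{prop:jump} for the lift produced in the forward direction.

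I expect the main obstacle to lie in the constant-speed direction. Condition (v) is imposed on an \emph{arbitrary} lift, so \eqref{eq:dotmu_BV} cannot simply be quoted and must first be re-established through the equal-total-mass argument of the previous paragraph; and the crucial nonobvious step is extracting the vanishing of the singular (Cantor) part of $|D\mu|$ purely from the equality of the integral of the metric derivative with the total variation. The remaining verifications---measurability of the integrands, finiteness from $\mu_0\in P_1(X)$, and the bookkeeping of endpoint atoms at $t=1$---are routine given the preliminaries.
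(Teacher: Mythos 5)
Your proposal is correct and follows essentially the same route as the paper: both directions of the $\mathcal{BV}$-geodesic equivalence via the sandwich $W_1(\mu_0,\mu_1)\le |D\mu|(I)\le\int|D\gamma|(I)\,\d\pi=\int d(\gamma_0,\gamma_1)\,\d\pi$ combined with \cref{thm:BVSk} and \cref{thm:lift_BV}, the continuity/length-minimising part via \cref{prop:jump}, and the constant-speed part by forcing $|D\mu|=W_1(\mu_0,\mu_1)\,\mathcal{L}^1$. Your explicit observation that \eqref{eq:Dmu} (and hence \eqref{eq:dotmu_BV} and \cref{prop:jump}) holds for \emph{any} lift satisfying \ref{itm:concgeo}--\ref{itm:koo}, by matching total masses against the measure inequality \eqref{eq:easyineq}, is exactly the step the paper uses but leaves implicit, and spelling it out is a welcome clarification rather than a deviation.
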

\begin{proof}
    Suppose first that $\pi$ satisfies \ref{itm:concgeo}--\ref{itm:koo}.
    Then by Theorem \ref{thm:BVSk} we have that
    \begin{align}
        W_1(\mu_0,\mu_1)\le |D\mu|([0,1])\le \int |D\gamma|([0,1]) \d\pi(\gamma)=\int d(\gamma_0,\gamma_1)\d\pi(\gamma)=W_1(\mu_0,\mu_1).
    \end{align}
    Hence, all the above inequalities are actually equalities, and thus $(\mu_t)$ is $\mathcal{BV}$-geodesic.
    
    Suppose now that $(\mu_t)$ is a $\mathcal{BV}$-geodesic, and let $\pi$ be given by Theorem \ref{thm:lift_BV}. Then \ref{itm:liftgeo} holds and we have 
    \begin{align}
        W_1(\mu_0,\mu_1)=|D\mu|([0,1])=\int |D\gamma|([0,1])\d\pi(\gamma)\ge \int d(\gamma_0,\gamma_1)\d\pi(\gamma)\ge W_1(\mu_0,\mu_1),
    \end{align}
    which proves \ref{itm:koo}. Furthermore, since the first inequality is due to the pointwise inequality $|D\gamma|([0,1])\ge d(\gamma_0,\gamma_1)$, we can actually conclude that there has to be pointwise equality for $\pi$-almost every curve $\gamma$, and hence $\pi$ is concentrated on $\mathcal{BV}$-geodesics.
    
    The claim about continuous and length minimizing follows immediately after considering \cref{prop:jump}.
    As for the characterization of constant-speed geodesics, thanks to the equality \eqref{eq:dotmu_BV}, it is enough to show the sufficiency, i.e. to show \ref{itm:concgeo}-\ref{itm:koo} and \ref{itm:csg} imply being constant-speed geodesic.
    By \cref{thm:lift_BV} and \ref{itm:koo}, we always have
    \begin{align}
    \int\lim_{h \rightarrow 0 } \int \frac{  d (\gamma_{t}, \gamma_{t+h})}{|h|} \d\pi(\gamma)\d t
    &=\int |\dot{\mu}|(t)\d t \\
    &\leq|D\mu|([0,1]) =\int |D\gamma|([0,1])\d\pi (\gamma)\\
    &=\int d(\gamma_0,\gamma_1)\d\pi(\gamma)=W_1(\mu_0,\mu_1).
    \end{align}
    Hence, once \ref{itm:csg} holds, equality holds everywhere in the above and in particular, 
    \[
    |D\mu|=W_{1}(\mu_0,\mu_1)\L^1|_{[0,1]}
    \]
    which means that $(\mu_t)$ has to be a constant-speed geodesic.
\end{proof}

\subsection{Regularity of the curves in superposition}
\cref{ex:theBeginning} illustrates that the superposition of a family of discontinuous curves can produce an absolutely continuous Wasserstein curve.
One could naturally ask similar questions, e.g., is it possible to get an absolutely continuous curve from superposing continuous singular curves? 
Here we answer these questions by investigating three different scenarios, where the lift as in \cref{thm:lift_BV} is concentrated \emph{purely} on either absolutely continuous (AC), continuous singular (CS), or jump (J) curves.
The answer is summarized in \cref{table:gamma_mu} and elaborated upon thereafter.

{\scriptsize
\begin{table}[h]
    \captionsetup{font=scriptsize}
    \caption{Curves $(\mu_t)$ in 1-Wasserstein spaces vs. curves $\gamma$ in their lifts as in \cref{thm:lift_BV}. The lift is assumed to be concentrated purely on either absolutely continuous (AC), continuous singular (CS), or jump (J) curves. We specify whether superposition is possible ($\checkmark$) or not ($\times$).}
    \begin{tabular}{ c  c | l l l l l }\label{table:gamma_mu}
     & \hspace{-10pt} $(\mu_t)$ & \multicolumn{1}{c}{AC} & & \multicolumn{1}{c}{CS} & & \multicolumn{1}{c}{J} \\
     $\gamma$  & & & & & &\\
    \hline
    & & & &\\
    AC &  &   $\checkmark$ (Take $\mu_t = \delta_{\gamma_t}$) & & $\times$ (\cref{rem:AC_implies_AC})  & & $\times$ (\cref{rem:AC_implies_AC}) \\
    & & & &\\
    CS &  &   $\checkmark$ (\cref{exp:periodic_extension_2}) & & $\checkmark$ (Take $\mu_t = \delta_{\gamma_t}$) & & $\times$ (\cref{prop:jump}) \\
    & & & & & &\\
    J &  &  $\checkmark$ (\cref{ex:theBeginning}) & & $\checkmark$ (Equation \eqref{eq:mu_CS}) & & $\checkmark$ (Take $\mu_t = \delta_{\gamma_t}$) \\
    & & & & & &
    \end{tabular}
\end{table}
}

First, note that we can always have a curve with the same regularity as the underlying curves by simply taking $\mu_t = \delta_{\gamma_t}, t\in I $ (diagonal entries of the table).
Secondly, if all curves $\gamma$ are AC, then $(\mu_t)$ is necessarily AC and thus cannot be CS or J due to the following simple observation: 
\begin{rem}[A sufficient condition for absolutely continuity]\label{rem:AC_implies_AC}
  Under the assumptions of \cref{thm:lift_BV}, if the lift $\tilde{\pi}$ is merely concentrated on $\mathcal{AC}^1(I:X)$, then $(\mu_t) \in \mathcal{AC}^1(I:P_1(X))$.
  This directly follows from \eqref{eq:Dmu},
  \begin{align}
    |D\mu|=\int|D\gamma|\d\tilde{\pi}(\gamma) = \int \int_I |\dot\gamma_t| \d t \d\tilde{\pi}(\gamma) = \int_I \left( \int |\dot\gamma_t|  \d\tilde{\pi}(\gamma) \right) \d t,
    \end{align}
    which implies that $|D\mu| \ll \mathcal{L}^1|_{I}$ and thus $(\mu_t)$ is an AC-curve.
\end{rem}
Next, the case $\gamma$-CS, $(\mu_t)$-J is not possible. As explained after \cref{prop:jump}, if $(\mu_t)$ jumps, then at least some curves must jump as well. The opposite case, $\gamma$-J, $(\mu_t)$-CS, can however happen. Take, for example,
\begin{align}\label{eq:mu_CS}
    \mu_t \coloneqq (1-c(t))\delta_0+ c(t)\delta_1, \quad t \in I,
\end{align}
where $c: [0,1] \to [0,1]$ is the Cantor function. Then, in the same way as \cref{ex:theBeginning}, one can construct a lift that is concentrated on jump curves. Lastly, in \cref{exp:periodic_extension_2} below, we show that the case $\gamma$-CS, $(\mu_t)$-AC is possible as well.

As a final remark, it is interesting to notice that all cases in the upper diagonal of \cref{table:gamma_mu} turn out to be not possible.
One can conclude that this is simply due to the fact that one can produce regular Wasserstein curves out of irregular curves, while producing irregular Wasserstein curves purely out of regular curves is not possible. 

\begin{example}[Constant-speed $W_1$-geodesics out of arbitrary BV-curves]\label{exp:periodic_extension_2}
Here we provide a fairly general way of constructing constant-speed $W_1$-geodesics (which are in particular absolutely continuous) for which the lift is concentrated on an arbitrary set of BV-curves, in particular, on a set of non-absolutely continuous curves.
Consider $X = \mathbb{R}$ with the Euclidean distance and let $\sigma_0\in P([0,1])$ be an arbitrary probability measure with $\sigma_0(\{1\})=0$. 
The aim of this example is to construct a lift $\pi$ as in \cref{thm:lift_BV} so that the corresponding Wasserstein curve is a constant-speed geodesic connecting $\delta_0$ to $\delta_1$, and $\pi$ is concentrated on curves whose total variation measure are translates of $\sigma_0$.
Then by arbitrarily choosing $\sigma_0$, we will be able to obtain different geodesics. 
\\
First, we define a measure $\sigma$ by periodically extending $\sigma|_{[n,n+1)}=\sigma_0|_{[0,1)}$ for all $n\in\Z$. Define a family of curves $\{ \gamma^{(\alpha)} \}_\alpha$ labelled by $\alpha \in [0,1]$
\[
\gamma^{(\alpha)}(t)\coloneqq \sigma((\alpha,\alpha+t]), \quad t \in I,
\]
which forms a Borel map $\gamma^{(\cdot)}: \alpha \mapsto \gamma^{(\alpha)}$ from $I$ to $D(I:[0,1])$. Indeed, by \cref{lma:Boreleval}, it follows from the fact that for every $t$, the composition
\[
[0,1]\overset{\gamma^{(\cdot)}}{\to} D(I:[0,1])\overset{e_t}\to [0,1] 
\]
is Borel. Notice that the map $\alpha \mapsto \sigma((\alpha,\alpha +t])$ is the difference of two increasing functions, namely, $\alpha\mapsto\sigma((0,\alpha])$ and $\alpha\mapsto\sigma((0,\alpha+t])$.
\\
Define $\pi\in P(D(I:X))$ as 
\begin{align}
    \pi\coloneqq (\gamma^{(\cdot)})_{\#}\L^1,
\end{align}
and $\mu_t\coloneqq (e_t)_\#\pi$.
Clearly $\pi$ is concentrated on $\mathcal{BV}(I:[0,1])$ so by \cref{thm:BVSk}, $(\mu_t) \in \mathcal{BV} (I: P_1(X))$ and
 \begin{align}\label{eq:geodesic_from_cantor}
     |D\mu|\le \int |D\gamma|\d\pi(\gamma)=\int_0^1|D\gamma^{(\alpha)}| \d\L^1( \alpha ).
 \end{align}
As the family $\{\gamma^{(\alpha)}\}_\alpha$ is given by translation, we have
\begin{align}\label{eq:sumDgamma}
    \int_0^1|D\gamma^{(\alpha)}|\d \L^1( \alpha)=\L^1.
\end{align}
Actually, for any $f\in C([0,1]:\R)$,
 \begin{align}
     \iint f(t) |D\gamma^{(\alpha)}|(\d t)\L^1(\d \alpha)&=\iint f(\iota(t+\alpha))|D\gamma^0|(\d t)\d \alpha
     \\&=\iint f(\iota(t+\alpha))\d\sigma(t)\d \alpha
     \\ &=\iint f(\alpha)\d \alpha\d\sigma(t)=\int f\d\L^1,
 \end{align}
where $\iota:\R\to [0,1)\simeq\R/\Z$ is the quotient map.
\\
Notice for all $\gamma^{(\alpha)}$, we have $\gamma^{(\alpha)}(0)=0$ and $\gamma^{(\alpha)}(1)=1$. So $W_1(\mu_0,\mu_1)=W_1(\delta_0, \delta_1)=1$. On the other hand, combining \eqref{eq:geodesic_from_cantor} and \eqref{eq:sumDgamma}, we have $|D\mu|=\L^1$. In other words, $(\mu_t)$ is a constant-speed geodesic. 
\\
In conclusion, we have constructed a constant-speed geodesic $(\mu_t)$ and a lift $\pi$ of $(\mu_t)$ concentrated on BV-curves whose variation measures are cyclical translations of $\sigma_0$. Now, different choices $\sigma_0$ give rise to different $W_1$-geodesics between $\delta_0$ and $\delta_1$, as shown in \cref{fig:constant-speed-geodesics}. In particular,
\begin{itemize}
    \item $\sigma_0= \mathcal{L}|_{[0,1]}$ corresponds to the trivial geodesic $ \mu_t = \delta_t$ (\cref{fig:constant-speed-geodesics} (A)), which is also the (unique) constant-speed geodesic for $p>1$. 
    \item $\sigma_0= \delta_0$ corresponds to geodesic $\mu_t = (1-t)\delta_0+t\delta_1$, studied in \cref{ex:theBeginning}. 
    \item Finally, by choosing $\sigma_0$ to be a probability measure with no atoms and no absolutely continuous part, we get that $\pi$ is concentrated on BV-curves that are continuous but not absolutely continuous.
\end{itemize}

\begin{figure}[h]
  \centering
    \begin{subfigure}[b]{0.36\linewidth}
        \captionsetup{font=scriptsize, position=top}
        \subcaption{$\,$}
        \centering
        \includegraphics[width=\linewidth]{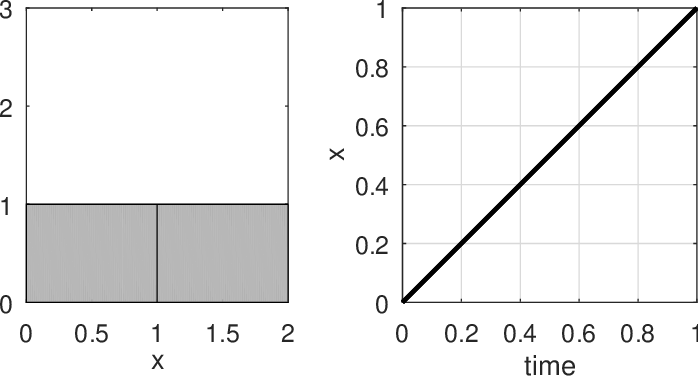}
  \end{subfigure}
  \hspace{0.05\linewidth}
  \begin{subfigure}[b]{0.36\linewidth}
        \captionsetup{font=scriptsize, position=top}
        \subcaption{$\,$}
        \centering
        \includegraphics[width=\linewidth]{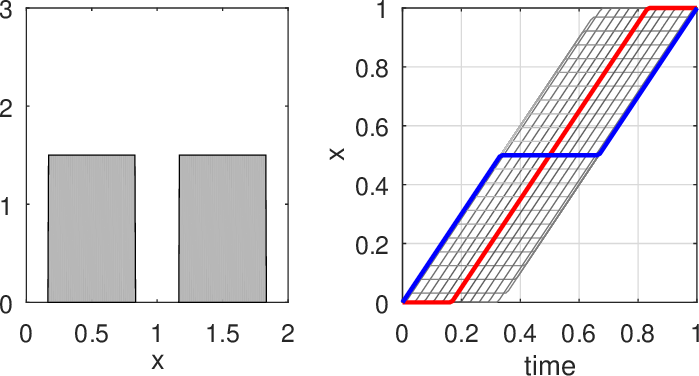}
  \end{subfigure}
  \\
  \begin{subfigure}[b]{0.36\linewidth}
        \captionsetup{font=scriptsize, position=top}
        \subcaption{$\,$}
        \centering
        \includegraphics[width=\linewidth]{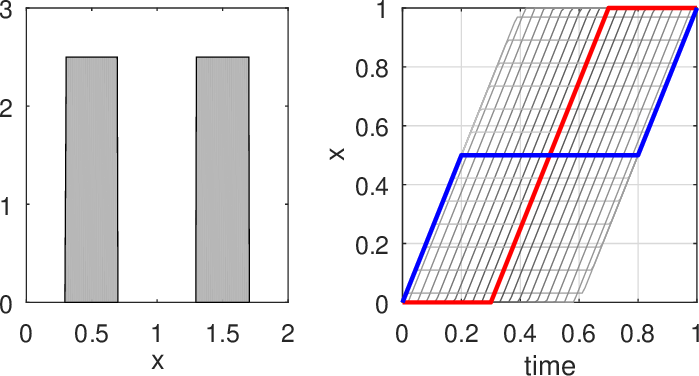}
 \end{subfigure}
 \hspace{0.05\linewidth}
  \begin{subfigure}[b]{0.36\linewidth}
        \captionsetup{font=scriptsize, position=top}
        \subcaption{$\,$}
        \centering
        \includegraphics[width=\linewidth]{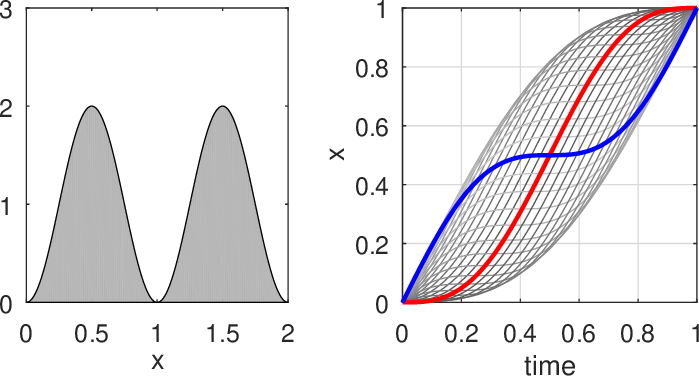}
  \end{subfigure}
 \captionsetup{font=scriptsize}
\caption{(\cref{exp:periodic_extension_2}) Construction of different constant-speed geodesics in $P_1(\mathbb{R})$ between $\delta_0$ and $\delta_1$.
\textbf{Left} sub-figures show example measures $\sigma_0$ and their periodic extension $\sigma$.
\textbf{Right} sub-figures show sample curves $\gamma^{(\alpha)}, \alpha \in [0,1]$. Curves with $\alpha=0, 0.5$ are highlighted in red and blue, respectively.
}
\label{fig:constant-speed-geodesics}
\end{figure}

\end{example}

\subsection{Continuity equation in discrete setting}
\cref{thm:lift_BV} is a useful tool for the study of BV-curves in 1-Wasserstein spaces.
In the continuous setting, it is well known that whenever the space $X$ has a
kind of differential structure, absolutely continuous curves $(\mu_t) \subset P_p(X)$, for $p>1$, are related to solutions of the continuity equation (see e.g. \cite[Chapter 8]{AGSmetricflow}). 
More precisely, one can find a  time-dependent Borel
velocity field $v_t: X \to X$ of $(\mu_t)$ so that the continuity equation
\begin{align}
    \partial_t \mu_t+ \nabla \cdot (v_t\mu_t)=0 \quad \mathrm{in} \, X \times I
\end{align}
holds and $|\dot\mu_t|^p=\int |v_t|^p(x)\d \mu_t(x)$.

Concerning the continuity equation, the case $p=1$ is far more involved, not least due to the presence of non-localities as seen already in \cref{ex:theBeginning}. 
While the exponent $p=1$ creates great difficulties in the continuous setting, it also opens up the possibility of studying analogous questions in the discrete setting.
The discrete counterpart to the continuity equation, sometimes referred to as the current equation, is also studied in the literature and has a tight connection with Markov chains.
In \cite{Leonard2016Lazy}, among other things, L\'eonard derives a Benamou--Brenier type formula relating $W_1(\mu_0,\mu_1)$ to the current equation on metric graphs.
See also \cite{Maas2011GFfinite}, where an alternative metric on the space of probability measures on a finite set $X$ is introduced via modifying the Benamou--Brenier formula in order to have a gradient flow interpretation of the heat flow in a discrete setting.
Different aspects of this metric are studied later in \cite{Erbar2012ricci}, in particular, for the characterization of absolutely continuous curves in the corresponding metric space.

In this section, we study the current equation in a countable and proper metric space for which the induced topology is discrete. More precisely, we show that the current equation can be directly recovered from \cref{thm:lift_BV}, yielding that for a given BV-curve $(\mu_t)$ there exists $v_t \colon X\to \M(X), x \mapsto v_t^{x}$, so that the pair $(\mu_t,v_t)$ satisfies the current equation.
The obtained $v_t$ (or rather $d(x,\cdot) v_t^{x}(\cdot)$) can be interpreted as a velocity field, but unlike the continuous setting, it is a time-dependent positive measure over the space $X$. 
While the (pointwise defined) continuity equation makes sense for BV-curves due to almost everywhere differentiability, the result is more meaningful whenever $(\mu_t)$ is an absolutely continuous curve and therefore completely characterized by the continuity equation.

\vspace{5pt}

\noindent \textbf{Setting.} Throughout this section, $(X,d)$ is a countable and proper metric space whose induced topology is discrete and we adopt the notation $\mu_{t}(x) = \mu_{t}(\{x\})$, $x\in X$. We start by recalling the current equation:

\begin{definition}(current equation)
A family $(\mu_t,v_t)_{t\in I}$ with $\mu_t\in P(X)$, $v_t\colon X\to \M(X)$, is said to satisfy the \emph{current equation} if for every $x\in X$
\begin{align}\label{eq:current}
    \frac{\d}{\d t}\mu_t(x)=\sum_{y \in X} v_t^y(x)\mu_t(y)-\mu_t(x)\sum_{y \in X} v_t^x(y),\quad \text{$\mathcal{L}^1$-a.e. $t\in I$}.
\end{align}
\end{definition}

The following lemma states a useful observation for Wasserstein curves in discrete spaces, whose proof is included in the argument of \cref{thm:ctyeq}.
\begin{lemma} \label{lemma:mu_t_x}Let $(\mu_t)\subset P_1(X)$. If $t\mapsto\mu_t$ is BV or absolutely continuous, then for each $x\in X$, $t\mapsto \mu_t(x)$ is BV or absolutely continuous, respectively. The reverse is also true when all measures $\mu_t$ are supported inside a common bounded set.
\end{lemma}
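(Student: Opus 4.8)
The plan is to compare the scalar curves $t\mapsto\mu_t(x)$ with the Wasserstein curve $t\mapsto\mu_t$ by establishing two-sided estimates between $W_1(\mu_t,\mu_s)$ and the masses $\mu_t(x)$, exploiting the discreteness of the topology. In both directions the idea is the same: once one has a pointwise comparison of the increments, the BV (resp. AC) property transfers automatically, so the heart of the matter is producing the right comparison inequalities.

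For the forward implication, the key observation is that discreteness makes each singleton open, so for every $x\in X$ there is $\delta_x\coloneqq\inf_{y\ne x}d(x,y)>0$. I would then introduce the $1$-Lipschitz function $\varphi_x(y)\coloneqq(\delta_x-d(x,y))_+$, which equals $\delta_x$ at $y=x$ and vanishes at every other point, so that $\int\varphi_x\,\d\nu=\delta_x\,\nu(x)$ for any $\nu$. Kantorovich--Rubinstein duality then yields the pointwise bound
\[
\delta_x\,\lvert\mu_t(x)-\mu_s(x)\rvert=\Big\lvert\int\varphi_x\,\d(\mu_t-\mu_s)\Big\rvert\le W_1(\mu_t,\mu_s).
\]
This Lipschitz-type estimate transfers both properties at once: summing over a partition gives $\var(\mu_{\cdot}(x);I)\le\delta_x^{-1}\var(\mu;I)$, and if $(\mu_t)$ is AC with $W_1(\mu_t,\mu_s)\le\int_s^t g$ for some $g\in L^1$, then $\lvert\mu_t(x)-\mu_s(x)\rvert\le\delta_x^{-1}\int_s^t g$. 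One works throughout with the Càdlàg representative, so that pointwise and essential variation coincide by \cref{lemma:normalizedBV}. Note that this direction uses only discreteness, not properness or boundedness.

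For the reverse implication, the extra hypothesis of a common bounded support $B$ is exactly where properness enters: $\overline B$ is closed and bounded, hence compact, and since the topology is discrete a compact set is finite, so every $\mu_t$ is supported on a fixed finite set $\{x_1,\dots,x_n\}=\overline B\cap X$ of diameter $D<\infty$. I would then produce the opposite estimate by an explicit suboptimal coupling: writing $\mu_t$ and $\mu_s$ as their common part plus disjoint remainders of equal mass $m=\tfrac12\sum_i\lvert\mu_t(x_i)-\mu_s(x_i)\rvert$, keeping the common part fixed and transporting the remainder arbitrarily costs at most $Dm$, whence
\[
W_1(\mu_t,\mu_s)\le\frac{D}{2}\sum_{i=1}^n\lvert\mu_t(x_i)-\mu_s(x_i)\rvert.
\]
Because this is a finite sum, BV (resp. AC) of each $t\mapsto\mu_t(x_i)$ is inherited by $t\mapsto\mu_t$ just as before, by summing the variations (resp. the $L^1$ densities).

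The variation and absolute-continuity bookkeeping is routine. The only point requiring genuine care is the reverse direction, where one must justify that a common bounded support forces finitely many atoms; this is precisely where properness combined with discreteness is used, and it cannot be dropped, since without boundedness the quantity $\sum_x\lvert\mu_t(x)-\mu_s(x)\rvert$ need not control $W_1$ at all (mass escaping to infinity can make $W_1$ large while every individual coordinate barely moves).
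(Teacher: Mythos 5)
Your proposal is correct, and the forward direction is essentially the paper's argument: inside the proof of \cref{thm:ctyeq} the paper records the inequality $r_x\,\lvert\mu_t(x)-\mu_s(x)\rvert\le W_1(\mu_t,\mu_s)$ with $r_x=\min\{d(x,y):y\neq x\}\wedge 1$, which is exactly your bound with $\delta_x$ replaced by $\delta_x\wedge 1$; your explicit $1$-Lipschitz witness $\varphi_x=(\delta_x-d(x,\cdot))_+$ just makes the duality step visible. Where you go beyond the paper is the reverse implication: the paper declares the lemma proved within the argument of \cref{thm:ctyeq} but never writes out the converse, whereas you supply it cleanly --- properness plus discreteness forces a common bounded support to be a finite set, and the suboptimal coupling that fixes the common part gives $W_1(\mu_t,\mu_s)\le \tfrac{D}{2}\sum_i\lvert\mu_t(x_i)-\mu_s(x_i)\rvert$, from which BV and AC transfer back through the finite sum. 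Your closing remark that boundedness cannot be dropped is also well taken. The only bookkeeping point worth being explicit about is that in the BV case one should fix the C\`adl\`ag representative (as you do, via \cref{lemma:normalizedBV}) so that the pointwise-variation estimates obtained by summing over partitions actually control the essential variation; with that, the argument is complete.
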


\begin{theorem}[BV-curves and current equation]\label{thm:ctyeq}
    Let $(\mu_t)\in \mathcal{BV}(I:P_1(X))$ and assume that for each $t\in I=[0,1]$, $\mathrm{supp}(\mu_t)$ is bounded. Then there exists $(v_t)$ so that $(\mu_t,v_t)$ satisfies the current equation.
    If further all measures $\mu_t$ are supported inside a common bounded set, then
    \begin{enumerate}[label=(\roman*), font=\normalfont]
        \item \label{itm:ctyeq_1} For any $(v_t)$ such that the pair $(\mu_t,v_t)$ satisfies the current equation, we have 
        \begin{equation}
            |\dot\mu_t| \leq \sum_{x,y} d(x,y)  v_t^x(y) \mu_t(x), \quad \text{$\mathcal{L}^1$-a.e. $t\in I$}.
        \end{equation}
        \item \label{itm:ctyeq_2} There exists a $(v_t)$ satisfying the current equation such that
        \begin{align}\label{eq:metricspeed}
            |\dot\mu_t|= \sum_{x,y} d(x,y) v_t^x(y) \mu_t(x), \quad \text{$\mathcal{L}^1$-a.e. $t\in I$}.
        \end{align}
    \end{enumerate}
\end{theorem}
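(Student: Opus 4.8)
The plan is to construct $(v_t)$ directly from the lift $\tilde\pi$ provided by \cref{thm:lift_BV}, exploiting the rigidity of the state space: since $(X,d)$ is discretely topologized, continuous curves are constant, so $\tilde\pi$-a.e.\ $\gamma$ is a right-continuous pure-jump curve whose variation measure $|D\gamma|=|D\gamma|^J$ is purely atomic; and since $X$ is proper, bounded sets are compact, hence finite, so each $\mathrm{supp}(\mu_t)$ is finite and under the common-support hypothesis the whole problem lives on a finite set. I would first settle \cref{lemma:mu_t_x}: as each $\{x\}$ is isolated, $\mathds{1}_{\{x\}}$ is $\delta_x^{-1}$-Lipschitz with $\delta_x:=\inf_{y\ne x}d(x,y)>0$, so $|\mu_t(x)-\mu_s(x)|\le \delta_x^{-1}W_1(\mu_t,\mu_s)$ by Kantorovich duality, which transfers BV/AC regularity from $(\mu_t)$ to each $t\mapsto\mu_t(x)$; the finite-set bound $W_1(\mu_t,\mu_s)\le\tfrac12\diam(\mathrm{supp})\sum_x|\mu_t(x)-\mu_s(x)|$ gives the converse on a common bounded support.

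The central object is the expected flux measure. For $x\ne y$ define the finite Borel measure $J_{xy}$ on $I$ by $J_{xy}(A):=\int \#\{s\in A:\gamma_{s^-}=x,\ \gamma_s=y\}\,d\tilde\pi(\gamma)$; finiteness of $\sum_y J_{xy}$ and $\sum_y J_{yx}$ for each fixed $x$ follows from $\sum_{x,y}d(x,y)J_{xy}(I)=\int|D\gamma|(I)\,d\tilde\pi=|D\mu|(I)<\infty$ together with $d(x,y)\ge\delta_x$. Testing against $\xi\in C^1_c(0,1)$ and using Fubini, the distributional derivative of $t\mapsto\mu_t(x)$ is $D(\mu_\cdot(x))=\sum_y(J_{yx}-J_{xy})$; passing to Lebesgue--Radon--Nikodym densities and writing $g_{xy}$ for the density of $J_{xy}^{ac}$ gives $\tfrac{d}{dt}\mu_t(x)=\sum_y g_{yx}(t)-\sum_y g_{xy}(t)$ for $\mathcal L^1$-a.e.\ $t$. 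I would then set
\[
v_t^x(y):=\frac{g_{xy}(t)}{\mu_t(x)}\ \text{ if }\mu_t(x)>0,\qquad v_t^x(y):=0\ \text{ if }\mu_t(x)=0,
\]
so the current equation holds as soon as $\mu_t(x)\,v_t^x(y)=g_{xy}(t)$ for a.e.\ $t$, i.e.\ as soon as $g_{xy}$ vanishes a.e.\ on $E_x:=\{\mu_\cdot(x)=0\}$.

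This compatibility claim is the main obstacle, and I would prove it by a one-sided Lebesgue-density argument. From $\int_{E_x}\mu_s(x)\,ds=\int\mathcal L^1(\{s:\gamma_s=x\}\cap E_x)\,d\tilde\pi=0$, for $\tilde\pi$-a.e.\ $\gamma$ the occupation set $Z_\gamma=\{s:\gamma_s=x\}$ meets $E_x$ in a null set. Every $x\to y$ departure time $\tau$ carries a genuine left-sojourn $(\tau-\ell_\tau,\tau)\subset Z_\gamma$ with $\ell_\tau>0$, so $(\tau-\ell_\tau,\tau)\cap E_x$ is null, forcing the left density of $E_x$ at $\tau$ to be $0$; hence $\tau$ lies in the set $N:=E_x\setminus E_x^{(1,-)}$ of non-left-density points of $E_x$, which is the same for all curves and is $\mathcal L^1$-null by the one-sided density theorem. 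Thus all $x\to y$ departures in $E_x$ lie in $N$, giving $J_{xy}(E_x\setminus N)=0$ and $J_{xy}^{ac}(N)=0$, whence $J_{xy}^{ac}(E_x)=0$. With the claim, $\mu_t(x)v_t^x(y)=g_{xy}(t)$ a.e.\ and the current equation follows from the density identity; this already yields existence under the weaker hypothesis that each $\mathrm{supp}(\mu_t)$ is bounded (the argument is per vertex $x$).

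For the quantitative statements I would assume a common (finite) support. Part~\ref{itm:ctyeq_1} is a duality estimate: for $1$-Lipschitz $\phi$ the current equation gives $\int\phi\,d\mu_{t+h}-\int\phi\,d\mu_t=\int_t^{t+h}\sum_{x,y}(\phi(y)-\phi(x))v_s^x(y)\mu_s(x)\,ds$, so $W_1(\mu_t,\mu_{t+h})\le\int_t^{t+h}\sum_{x,y}d(x,y)v_s^x(y)\mu_s(x)\,ds$ by Kantorovich--Rubinstein, and dividing by $h$ and using Lebesgue differentiation yields $|\dot\mu_t|\le\sum_{x,y}d(x,y)v_t^x(y)\mu_t(x)$ a.e. For part~\ref{itm:ctyeq_2}, using the $v$ built above and $|D\gamma|=|D\gamma|^J$, one has $\sum_{x,y}d(x,y)J_{xy}=\int|D\gamma|\,d\tilde\pi=|D\mu|$ as measures on $I$ by \cref{thm:lift_BV}; taking absolutely continuous parts gives $\sum_{x,y}d(x,y)g_{xy}(t)=|\dot\mu_t|$ a.e., i.e.\ equality $\sum_{x,y}d(x,y)v_t^x(y)\mu_t(x)=|\dot\mu_t|$. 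The remaining points (measurability of $J_{xy}$ and interchanging sum, integral and derivative) are harmless because the effective state space is finite.
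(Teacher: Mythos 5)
Your construction is correct but follows a genuinely different route from the paper. The paper disintegrates the lift $\tilde\pi$ as $\int\pi^x_t\,\d\mu_t(x)$ with respect to $e_t$ and obtains $v^x_t$ as a weak subsequential limit of the rescaled one-step transition laws $\frac1h(e_{t+h})_\#\pi^x_t\lvert_{X\setminus\{x\}}$, using properness for tightness and the bound $\limsup_h\int h^{-1}d(\gamma_t,\gamma_{t+h})\,\d\pi^x_t\le|\dot\mu_t|/\mu_t(x)$ for uniform mass bounds; the current equation and \eqref{eq:metricspeed} then follow by passing to the limit in the disintegration identity. You instead build the global jump-counting measures $J_{xy}$ on the time axis, identify $D(\mu_\cdot(x))=\sum_y(J_{yx}-J_{xy})$ distributionally, and define $v^x_t$ from the Radon--Nikodym densities of the absolutely continuous parts. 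Your route buys two things: it produces a single velocity field for a.e.\ $t$ at once (no subsequence extraction at each $(t,x)$, hence no worry about measurability in $t$ of the limit), and it treats the degenerate set $E_x=\{\mu_\cdot(x)=0\}$ explicitly via the one-sided density argument --- a point the paper's proof passes over silently when it divides by $\mu_t(x)$. Part \ref{itm:ctyeq_2} then falls out cleanly by taking absolutely continuous parts in $\sum_{x,y}d(x,y)J_{xy}=|D\mu|$, whereas the paper argues via weak convergence of $\hat\nu^x_{t+h}$ against $d(x,\cdot)$. The price is the extra bookkeeping with counting measures and the density-point lemma.

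There is one genuine misstep, in your part \ref{itm:ctyeq_1}: the identity $\int\phi\,\d\mu_{t+h}-\int\phi\,\d\mu_t=\int_t^{t+h}\sum_{x,y}(\phi(y)-\phi(x))v^x_s(y)\mu_s(x)\,\d s$ presupposes that $t\mapsto\mu_t(x)$ is absolutely continuous, but the current equation only prescribes the pointwise derivative $\mathcal L^1$-a.e.\ and says nothing about the Cantor or jump parts of a BV curve. (Take $v\equiv 0$ and $\mu_t(x)$ a Cantor-type function: the current equation holds a.e., yet your identity would force $\mu_{t+h}=\mu_t$.) The conclusion of \ref{itm:ctyeq_1} is still true because it only concerns the metric derivative, i.e.\ the absolutely continuous part; the fix is to argue as the paper does, at the level of difference quotients at a common differentiability point $t$ of all the (finitely many) functions $\mu_\cdot(x)$, writing $\frac{\mu_t(x)-\mu_s(x)}{t-s}=\frac{\d}{\d t}\mu_t(x)+\varepsilon_x(|t-s|)$ and noting that the resulting bound on $\bigl|\int\psi\,\d(\mu_t-\mu_s)\bigr|$ is uniform over $1$-Lipschitz $\psi$. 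The rest of your argument, including the compatibility claim $g_{xy}=0$ a.e.\ on $E_x$ and the deduction of \eqref{eq:metricspeed}, is sound.
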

\begin{proof}
   \textbf{Proof of the a priori estimate \ref{itm:ctyeq_1}.}
Recall the Kantorovich--Rubinstein theorem, 
    \begin{align}\label{eq:W1_duality}
        W_1(\mu_{t},\mu_{s})
        & = \sup_{\lVert \psi \rVert_{\mathrm{Lip}} \leq 1}  \left| \int \psi \d (\mu_{t} - \mu_{s}) \right|,
    \end{align}
    where the supremum runs over all Lipschitz functions $\psi: X \to \R$ with constant $\lVert \psi \rVert_{\mathrm{Lip}} \leq 1$. 
    Let $(\mu_t,v_t)$ be a pair satisfying the current equation.
    Due to the existence of  $\frac{\d}{\d t}\mu_t(x)$ for $\mathcal{L}^1$-a.e. $t\in I$, we have 
    \begin{align}
        \frac{\mu_t(x) - \mu_s(x)}{t-s}
        & = \frac{\d}{\d t} \mu_t(x) + \varepsilon_x (|t-s|) \\
        & =  \sum_{y} v_t^y(x) \mu_t(y) - \mu_t(x) \sum_{y}  v_t^x(y)  + \varepsilon_x (|t-s|)
    \end{align}
    where the error function $\varepsilon_x (|t-s|)$, which depends on $x$, vanishes as $|t-s| \to 0$.
    Then for any 1-Lipschitz function $\psi$, we obtain 
    \begin{align}
        \int \psi \d (\mu_{t} - \mu_{s})
        & =  \sum_{x} \psi(x)  (\mu_{t}(x) - \mu_{s}(x)) \\
        & = (t-s) \left( \sum_{x,y} \psi(x)   v_t^y(x) \mu_t(y) - \psi(x)  v_t^x(y) \mu_t(x)  + \sum_{x} \varepsilon_x (|t-s|) \right) \\
        & = (t-s) \left( \sum_{x,y} \psi(x)   v_t^y(x) \mu_t(y) - \psi(y)  v_t^y(x) \mu_t(y) + \sum_{x} \varepsilon_x (|t-s|) \right) \\
        & = (t-s)  \left(\sum_{x,y} (\psi(x) - \psi(y))  v_t^y(x) \mu_t(y) +  \sum_{x} \varepsilon_x (|t-s|)\right)\\
        &\leq |t-s| \left( \sum_{x,y} d(x,y)  v_t^y(x) \mu_t(y) +\sum_{x} \varepsilon_x (|t-s|)\right)
    \end{align}
    where in the third step, we simply exchanged the indexes of summation in the second term.
    As all $\mathrm{supp}(\mu_t)$ are confined to a common bounded set, the above summation over $x$ is actually a finite sum. So 
    \begin{equation}\label{eq:ctyeq_proof_bound}
         \left| \int \psi \d (\mu_{t} - \mu_{s})  \right| \leq |t-s|  \sum_{x,y} d(x,y)  v_t^y(x) \mu_t(y) + o (|t-s|).
    \end{equation}
    Since the right-hand side of the equation above no longer depends on the choice of function $\psi$, we can combine \eqref{eq:W1_duality} and \eqref{eq:ctyeq_proof_bound} to get 
    \begin{equation}
        |\dot \mu_t|= \lim_{s \to t} \frac{W_1(\mu_t, \mu_s)}{|t-s|} \leq   \sum_{x,y} d(x,y)  v_t^y(x) \mu_t(y), \quad \text{$\mathcal{L}^1$-a.e. $t\in I$}.
    \end{equation}
    \\
    \textbf{Proof of the existence and \ref{itm:ctyeq_2}}.
    Let $\pi$ be the lift of $(\mu_t)$ given by \cref{thm:lift_BV} with
    \begin{equation}
        |\dot \mu_t|=\lim_{h\searrow 0}\int\frac{d(\gamma_t,\gamma_{t+h})}{h}\d\pi(\gamma).
    \end{equation}
    Denote by $\{\pi^x_t\}$ the disintegration of $\pi$ with respect to $e_t$, i.e., $\pi=\int\pi^x_t \d\mu_t(x)$
    and furthermore by $\{\nu_{t+h}^x\}$ the push-forward $\nu_{t+h}^x\coloneqq (e_{t+h})_\#\pi^x_t$. 
    The goal is to first prove that, for fixed $x$, the measure $$\hat\nu_{t+h}^x\coloneqq\frac{\nu^x_{t+h}\lvert_{X\setminus\{x\}}}{h}$$ converges weakly to some measure $v_t^x$.
    \\ 
    Since $X$ is proper with the discrete topology, the ball $B_1(x)$ contains only finite elements and so
    \[
    r_x\coloneqq \min\{d(x,y):y\neq x\}\wedge 1 > 0.
    \] Thus, 
    \begin{align}
       r_x \hat\nu_{t+h}^x(X)&=r_x\int_{\{\gamma: \gamma_{t+h}\neq x\}}\frac1h\d\pi^x_t\\
        &\leq r_x\int_{\{\gamma: \gamma_{t+h} \in B_1(x) \setminus \{x\} \}}\frac1h\d\pi^x_t +\int_{\{\gamma: \gamma_{t+h}\notin B_1(x)\}}\frac1h\d\pi^x_t\\
        &\leq\int\frac{d(\gamma_t,\gamma_{t+h})}{h}\d\pi^x_t(\gamma)\label{ineq:Sec4_unibdd}.
    \end{align}
   Combining \eqref{ineq:Sec4_unibdd} and the inequality
    \[
    \limsup_{h\searrow0}\int\frac{d(\gamma_t,\gamma_{t+h})}{h}\d\pi^x_t(\gamma)\le \frac{|\dot\mu_t|}{\mu_t(x)}<\infty,
    \]
    we have 
     \begin{align}
        \limsup_{h\searrow 0}\hat\nu_{t+h}^x(X)<\infty.
    \end{align}
    \\
    Next we prove tightness of $\{\hat\nu_{t+h}^x\}_h$. Let $\varepsilon>0$, and for every $M\in \N$ define $\Gamma_M\coloneqq \{\gamma:d(x,\gamma_{t+h})> M\}$. Then
    \begin{align}
         \limsup_{h\searrow0}\frac{\pi^x_t(\Gamma_M)}{h}\le  \limsup_{h\searrow0}\frac1M\int_{\Gamma_M}\frac{d(\gamma_t,\gamma_{t+h})}{h}\d\pi^x_t\le \frac1M \frac{|\dot\mu_t|}{\mu_t(x)}.
    \end{align}
    In particular, $\hat\nu_{t+h}^x(\{y:d(x,y)> M\})\to 0$ uniformly in $h$ when $M\to \infty$.
    Thus, properness of $X$ implies that $\{\hat\nu_{t+h}^x\}_h$ is tight, and for arbitrary weakly convergent subsequence we define $v^x_t$ to be its limit.
    \\
    Note that
    \begin{equation}
       r_x\cdot |\mu_t(x)-\mu_s(x)|\leq W_1(\mu_t,\mu_s). 
    \end{equation}
    So $t\mapsto \mu_t(x)$ is BV or absolutely continuous if $t\mapsto\mu_t$ is BV or absolutely continuous, respectively, which proves \cref{lemma:mu_t_x} as well.
    At $t\in (0,1)$ where $t\mapsto \mu_t(x)$ is differentiable, write
    \begin{align}
        \frac{\d}{\d t} \mu_t(x)&=\lim_{h\searrow 0}\frac{\mu_{t+h}(x)-\mu_t(x)}{h} \\ 
        &=\lim_{h\searrow 0}\frac{(e_{t+h})_{\#}\int\pi^y_t\d\mu_t(y)-\mu_t}{h}(x)\\
        &=\lim_{h\searrow 0}\big[\frac{\int_{\{y\neq x\}}(e_{t+h})_{\#}\pi^y_t\d \mu_t(y)}{h}(x)+\frac{(e_{t+h})_{\#}\pi^x_t(x)\mu_t(x)-\mu_t(x)}{h} \big]\\
       &=\lim_{h\searrow 0} \big[\sum_y \hat{\nu}^y_{t+h}(x)\mu_t(y)-\frac{{\nu}^x_{t+h}(X\setminus\{x\})}{h}\mu_t(x) \big]    \\
      &=\sum_y v_t^y(x)\mu_t(y)-\mu_t(x)\sum_{y} v _t^x (y),
    \end{align}
    where in the last equality we used the assumption that $\mu_t$ is concentrated on finite-many points.
    Finally, for \eqref{eq:metricspeed}, by weak convergence, 
    \begin{align}
        |\dot \mu_t|&=\lim_{h\searrow 0}\int\frac{d(\gamma_t,\gamma_{t+h})}{h}\d\pi(\gamma)=\lim_{h\searrow0}\iint \frac{d(x,\gamma_{t+h})}{h}\d\pi^x_t(\gamma)\d \mu_t(x)\\
        &=\lim_{h\searrow0}\iint \frac{d(x,y)}{h}\d\nu_{t+h}^x(y)\d \mu_t(x)
        \\
        &=\sum_{x}\mu_t(x)\lim_{h\searrow0}\int d(x,y) \d\big(\frac{\nu_{t+h}^x}{h}\big)(y)\\
        &=\sum_{x,y} \mu_t(x) d(x,y)v_t^x(y),
    \end{align}
    where $d(x,\cdot)$ can be regarded as bounded function as all $\mu_t$ are confined to a common bounded set.
\end{proof}

Couple more comments about the continuity equation in the discrete setting are in order. 
First of all, \cref{thm:ctyeq} could be used to prove a Benamou--Brenier type formula for the 1-Wasserstein distance in the discrete setting,
\begin{equation}
    W_1(\mu_0,\mu_1) = \inf_{(v_t,\mu_t)} \int_{0}^1 \sum_{x,y} d(x,y) v_t^x(y) \mu_t(x) \d t,
\end{equation}
where the infimum is taken over all $(\mu_t) \in \mathcal{AC}^1([0,1]:P_1(X))$ with $(v_t,\mu_t)$ satisfying the current equation \eqref{eq:current}. 
In fact, the 1-Wasserstein space over \emph{any} complete and separable metric space is geodesic\footnote{Geodesics are easily obtained by simple interpolation.}, and thus Benamou--Brenier formula follows whenever \cref{thm:ctyeq} is applicable. 
The disadvantage is that Benamou--Brenier formula in such a general form is hardly useful.

If instead, one assumes more structure on the space, for instance, that the space is a discrete metric graph, then one can ask whether the Benamou--Brenier formula holds among all transports that respect the graph structure in a suitable manner. 
As alluded before, such a formulation has been proven to hold by L\'eonard in \cite[Theorem 3.1]{Leonard2016Lazy}. 
We note that the result can be recovered by techniques introduced in this paper in the case of measures with bounded support. 
Indeed, given any $\mu_0$ and $\mu_1$, take $\sigma\in\opt(\mu_0,\mu_1)$. For any $(x,y)\in\mathrm{supp}(\sigma)$, consider a ``discrete geodesic'' $(x=x_1,\dots,x_n=y)$, and perform subsequent linear interpolations between $\delta_{x_i}$ and $\delta_{x_{i+1}}$ to obtain a Wasserstein geodesic $(\mu_t^{xy})$ between measures $\delta_x$ and $\delta_y$. This can be done so that $(\mu_t^{xy})$ has a constant speed.
Now apply \cref{thm:lift_BV} (or simply modify \cref{ex:theBeginning}) to obtain a lift $\pi^{xy}$ of $(\mu_t^{xy})$. Define 
\begin{align}
    \pi \coloneqq \int \pi^{xy}\d\sigma(x,y),
\end{align}
and let $\mu_t\coloneqq (e_t)_\#\pi$ for all $t \in [0,1]$. By construction and \cref{thm:BVSk}, we have that $(\mu_t)$ is a Wasserstein geodesic. 
Finally, it is readily checked that the measures $v_t$ constructed (from this particular $\pi)$ in the proof of \cref{thm:ctyeq} respect the graph structure, that is, $v_t^x(y)=0$ whenever $y$ is not a neighbor of $x$.

One challenge, however, for using our techniques to get more insight into the framework of graphs arises from the fact that it is not clear how to detect those curves on the level of the Wasserstein space which respects the graph structure.
More precisely, it is not clear when a Wasserstein curve has a lift that is concentrated on curves that only jump along the edges of the graph (cf. discussion in \cref{subsec:remarks_results}).
For instance, simply by looking at linear interpolations between measures like in \cref{ex:theBeginning}, one ends up with constant speed Wasserstein geodesics which often don't have any lifts respecting the graph structure.
Notice that for the construction of a pair $(\mu_t,v_t)$ realizing the Wasserstein distance via Benamou--Brenier formula, we do not need to lift arbitrary curves or even geodesics but rather construct a specific Wasserstein geodesic and its lift with the desired endpoints.

\bibliographystyle{amsplain}
\bibliography{ms.bib}

\providecommand{\bysame}{\leavevmode\hbox to3em{\hrulefill}\thinspace}
\providecommand{\MR}{\relax\ifhmode\unskip\space\fi MR }
\providecommand{\MRhref}[2]{%
  \href{http://www.ams.org/mathscinet-getitem?mr=#1}{#2}
}
\providecommand{\href}[2]{#2}
\begin{thebibliography}{10}

\bibitem{Ambrosio1990MetricSV}
Luigi Ambrosio, \emph{Metric space valued functions of bounded variation},
  Annali Della Scuola Normale Superiore Di Pisa-classe Di Scienze \textbf{17}
  (1990), 439--478.

\bibitem{Ambrosio2013UserGuide}
Luigi Ambrosio and Nicola Gigli, \emph{A user's guide to optimal transport},
  pp.~1--155, Springer Berlin Heidelberg, Berlin, Heidelberg, 2013.

\bibitem{AGSmetricflow}
Luigi Ambrosio, Nicola Gigli, and Giuseppe Savare, \emph{Gradient flows in
  metric spaces and in the space of probability measures}, Birkhäuser Basel,
  2008.

\bibitem{Ambrosio-Tilli}
Luigi Ambrosio and Paolo Tilli, \emph{Topics on analysis in metric spaces},
  Oxford Lecture Series in Mathematics and its Applications, vol.~25, Oxford
  University Press, Oxford, 2004. \MR{2039660}

\bibitem{Billingsley}
Patrick Billingsley, \emph{Convergence of probability measures}, second ed.,
  Wiley Series in Probability and Statistics: Probability and Statistics, John
  Wiley \& Sons, Inc., New York, 1999, A Wiley-Interscience Publication.
  \MR{1700749}

\bibitem{Erbar2012ricci}
Matthias Erbar and Jan Maas, \emph{Ricci curvature of finite markov chains via
  convexity of the entropy}, Archive for Rational Mechanics and Analysis
  \textbf{206} (2012), no.~3, 997--1038.

\bibitem{Leoni}
Giovanni Leoni, \emph{A first course in {S}obolev spaces}, Graduate Studies in
  Mathematics, vol. 105, American Mathematical Society, Providence, RI, 2009.
  \MR{2527916}

\bibitem{Lisini}
Stefano Lisini, \emph{Characterization of absolutely continuous curves in
  {W}asserstein spaces}, Calc. Var. Partial Differential Equations \textbf{28}
  (2007), no.~1, 85--120. \MR{2267755}

\bibitem{Lisini2016}
\bysame, \emph{Absolutely continuous curves in extended wasserstein-orlicz
  spaces}, ESAIM: COCV \textbf{22} (2016), no.~3, 670--687.

\bibitem{LottVillani2009}
John Lott and C\'{e}dric Villani, \emph{Ricci curvature for metric-measure
  spaces via optimal transport}, Ann. of Math. (2) \textbf{169} (2009), no.~3,
  903--991. \MR{2480619}

\bibitem{Leonard2016Lazy}
Christian Léonard, \emph{{Lazy random walks and optimal transport on graphs}},
  The Annals of Probability \textbf{44} (2016), no.~3, 1864 -- 1915.

\bibitem{Maas2011GFfinite}
Jan Maas, \emph{Gradient flows of the entropy for finite markov chains},
  Journal of Functional Analysis \textbf{261} (2011), no.~8, 2250--2292.

\bibitem{srivastava1998course}
Sashi~Mohan Srivastava, \emph{A course on borel sets}, Graduate Texts in
  Mathematics, Springer New York, 1998.

\bibitem{stein2009real}
Elias~M. Stein and Rami Shakarchi, \emph{Real analysis: Measure theory,
  integration, and hilbert spaces}, Princeton University Press, 2009.

\end{thebibliography}

\end{document}